\newcommand{\comment}[1]{\marginpar{\sffamily{\tiny #1
\par}\normalfont}}
\renewcommand{\comment}[1]
\newcommand{\bi}{\begin{itemize}}
\newcommand{\ei}{\end{itemize}}
\newcommand{\be}{\begin{enumerate}}
\newcommand{\ee}{\end{enumerate}}
\newcommand{\bc}{\begin{center}}
\newcommand{\ec}{\end{center}}
\newcommand{\bt}{\begin{tabular}}
\newcommand{\et}{\end{tabular}}
\newcommand{\bcase}{\vspace{3mm}
\noindent \textit{Case }}
\newtheorem{thm}{Theorem}[section]
\newtheorem{lem}[thm]{Lemma}
\newtheorem{propn}[thm]{Proposition}
\newtheorem{cor}[thm]{Corollary}
\theoremstyle{definition}
\newcommand{\blackboard}[1]{\ensuremath{\mathbb{#1}}}
\newcommand{\script}[1]{\ensuremath{\mathcal{#1}}}
\newcommand{\N}{\blackboard{N}}
\newcommand{\Z}{\blackboard{Z}}
\newcommand{\R}{\blackboard{R}}
\newcommand{\C}{\script{C}}
\newcommand{\hnn}{HNN extension}
\newcommand{\ac}{almost convex}
\newcommand{\mac}{$MAC$}
\newcommand{\mpac}{$M'AC$}
\newcommand{\ip}{isoperimetric function}
\newcommand{\ts}{\sigma_t}
\newcommand{\ra}{\rightarrow}
\newcommand{\g}{G}
\newcommand{\ow}{\overline{w}}
\newcommand{\ou}{\overline{u}}
\newcommand{\ov}{\overline}
\newcommand{\dd}{\delta}
\newcommand{\Epstein}{MR93i:20036}
\newcommand{\Bridson}{MR2000g:20071}
\newcommand{\Cannon}{MR88a:20049}
\newcommand{\Thiel}{MR95e:20052}
\newcommand{\MShap}{MR99k:20075}
\newcommand{\Stallings}{MR28:2139}
\newcommand{\Kapovich}{MR1879521}
\newcommand{\Riley}{MR1931373}
\newcommand{\Poenaru}{MR93d:57032}
\newcommand{\Funar}{MR99k:20074}
\newcommand{\Miller}{Miller}
\newcommand{\Belkbux}{MR2139683}
\newcommand{\Groves}{MR98b:20044}
\newcommand{\Guba}{MR2207020}
\begin{document}
%%%%%%%%%%%%%%%%

\title[Minimal Almost Convexity]
  {Minimal Almost Convexity}

\author[M.~Elder]{Murray Elder}
\address{School of Mathematics and Statistics\\
University of St. Andrews\\
North Haugh, St. Andrews\\
Fife, KY16 9SS, Scotland}
\email{murray@mcs.st-and.ac.uk}

\author[S.~Hermiller]{Susan Hermiller$\!\,^1$}
\address{Department of Mathematics\\
        University of Nebraska\\
         Lincoln NE 68588-0323, USA}
\email{smh@math.unl.edu}
\subjclass[2000]{20F65}
\keywords{(Minimally) almost convex, Baumslag-Solitar group, Stallings group}
\date{\today}

\begin{abstract}
%A group is minimally almost convex, or \mac, if any two points within
%distance 2 that lie on a
%sphere of radius $r$ centered at 1 can be joined by a path 
%of length less than $2r$ that lies
%inside the ball of radius $r$.  
In this article we show
that the Baumslag-Solitar group $BS(1,2)$ is minimally almost convex, 
or \mac.  We also
show that $BS(1,2)$ does not satisfy Po\'enaru's almost
convexity condition $P(2)$, and hence the condition $P(2)$
is strictly stronger than \mac.
%, in which the path joining
%the pair of points must have length a sublinear function of $r$.
Finally, we show that the groups $BS(1,q)$ for $q \ge 7$ and
Stallings' non-$FP_3$ group do not satisfy \mac.  As a
consequence, the condition \mac\  is not a commensurability
invariant.
\end{abstract}

\maketitle

\footnotetext[1]{Supported under NSF grant no.\ DMS-0071037}

%%%%%%%%%%%%%%%%%%%%%%
\section{Introduction}
%%%%%%%%%%%%%%%%%%%%%%

Let $G$ be a group with finite generating set $A$, 
let $\Gamma$ be the corresponding Cayley graph with the
path metric $d$, and let $S(r)$ and $B(r)$ denote the sphere 
and ball, respectively, of
radius $r$ in $\Gamma$.
The pair $(G,A)$ satisfies the almost
convexity condition
$AC_{f,r_0}$ 
for a function $f:\N \ra \R_+$ and a natural number 
$r_0 \in \N$ 
if for every natural number
$r \ge r_0$ and every pair of vertices $a,b \in S(r)$ with
$d(a,b) \le 2$, there
is a path inside $B(r)$ from $a$ to $b$ of length at most $f(r)$.
Note that every group satisfies the condition $AC_{f,1}$ for the
function $f(r)=2r$.  A group is {\it minimally almost convex}, or
\mac\ (called $K(2)$ in \cite{\Kapovich}), if the condition
$AC_{f,r_0}$ holds for the function $f(r)=2r-1$ and some number $r_0$;
that is, the least restriction possible is imposed on the function
$f$.  If the next least minimal restriction is imposed, i.e. if $G$ is
$AC_{f,r_0}$ with the function $f(r)=2r-2$, then the group is said to
be \mpac\ ($K'(2)$ in \cite{\Kapovich}).  Increasing the restriction
on the function $f$ further, the group satisfies Poenaru's $P(2)$
condition \cite{\Funar,\Poenaru} if $AC_{f,r_0}$ holds 
for a sublinear function
$f: \N \ra \R_+$, i.e. $f$ satisfies the property that for every
number $C>0$ the limit $\lim_{r \ra \infty}(r-Cf(r)) = \infty$.  All
of these definitions are generalizations of the original concept of
almost convexity given by Cannon in \cite{\Cannon}, in which the greatest
restriction is placed on the function $f$, namely that a group is {\it
almost convex} or $AC$ if there is a constant function $f(r) \equiv C$
for which the group satisfies the condition $AC_{C,1}$.
Results of \cite{\Cannon,\Kapovich,\Riley} show that
the condition \mac, and hence each of the other almost convexity
conditions, implies finite presentation of the group and solvability
of the word problem.

The successive strengthenings of the restrictions in
the definitions above give the implications
$AC \Rightarrow P(2) \Rightarrow$ \mpac\ $\Rightarrow$ \mac.
For this series of implications, a natural question to ask is, 
which of the
implications can be reversed?  
A natural family of groups to consider are the
Baumslag-Solitar groups 
$BS(1,q):=\langle a,t ~|~ tat^{-1}=a^q \rangle$ with $|q|>1$, which 
Miller and Shapiro \cite{\MShap} proved are
not almost convex with respect to any generating set.  

In the present paper, the structure
of geodesics in the Cayley graph of $BS(1,q)$ is 
analyzed in greater detail, in Section 2.
In Section 3, we use these results
to show that the group $BS(1,2)$
satisfies the property \mpac.  
In Section 4 we show that 
the group $BS(1,2)$ does not satisfy the $P(2)$ condition,
and hence the implication $P(2) \Rightarrow$ \mpac\  cannot
be reversed.

In section 4 we also show that the groups 
$BS(1,q)=\langle a,t ~|~ tat^{-1}=a^q \rangle$ for $q \ge 7$ are
not \mac.  Since the group $BS(1,8)$ is a finite index
subgroup of $BS(1,2)$, an immediate consequence of
this result is that both \mac\ and \mpac\ are not
commensurability invariants, and hence not quasi-isometry
invariants.  The related property $AC$ is also known to
vary under quasi-isometry; in particular, Thiel \cite{\Thiel}
has shown that $AC$ depends on the generating set.

Finally, in Section 5 we
consider Stallings' non-$FP_3$ group \cite{\Stallings}, which
was shown by the first author \cite{Ethesis, Elooppaper} 
not to be \ac\ with respect to two different finite generating sets.
In Theorem \ref{thm:stall}, we prove the stronger result that this group
also is not \mac, with respect to one of the generating sets.
Combining this with a result of Bridson \cite{\Bridson} that
this group has a quadratic \ip, we obtain 
an example of a group with quadratic \ip\  that is not \mac.
During the writing of this paper,
Belk and Bux \cite{\Belkbux} have shown another
such example; namely, they have shown that 
Thomson's group $F$, which also has a quadratic
\ip\ function \cite{\Guba}, does not satisfy \mac.

%%%%%%%%%%%%%%%%%%%%%%
\section{Background on Baumslag-Solitar groups}
%%%%%%%%%%%%%%%%%%%%%%
 
Let $G:=BS(1,q)=\langle a,t~|~tat^{-1}=a^q \rangle$ with generators
$A:=\{a,a^{-1},t,t^{-1}\}$ for any natural number $q>1$.
Let $\Gamma$ denote the corresponding
Cayley graph with path metric $d$, and let ${\C}$ denote
the corresponding Cayley 2-complex.

The complex ${\C}$ can be built from ``bricks''
homeomorphic to $[0,1] \times [0,1]$, with
both vertical sides labeled by a ``$t$'' upward, the top
horizontal side labeled by an ``$a$'' to the right, and
the bottom horizontal side split into $q$ edges
each labeled by an ``$a$'' to the right.  These bricks
can be stacked horizontally into ``strips''.
For each strip, $q$ other strips can
be attached at the top, and one on the bottom.
For any set of successive choices upward, then, the
strips of bricks can be stacked
vertically to fill the plane.  
The Cayley complex then is homeomorphic to the Cartesian
product of the real line with a regular tree $T$ of valence $q+1$;
see Figure \ref{fig:brick}.
\begin{figure}
\bt{cc}
\includegraphics[width=6cm]{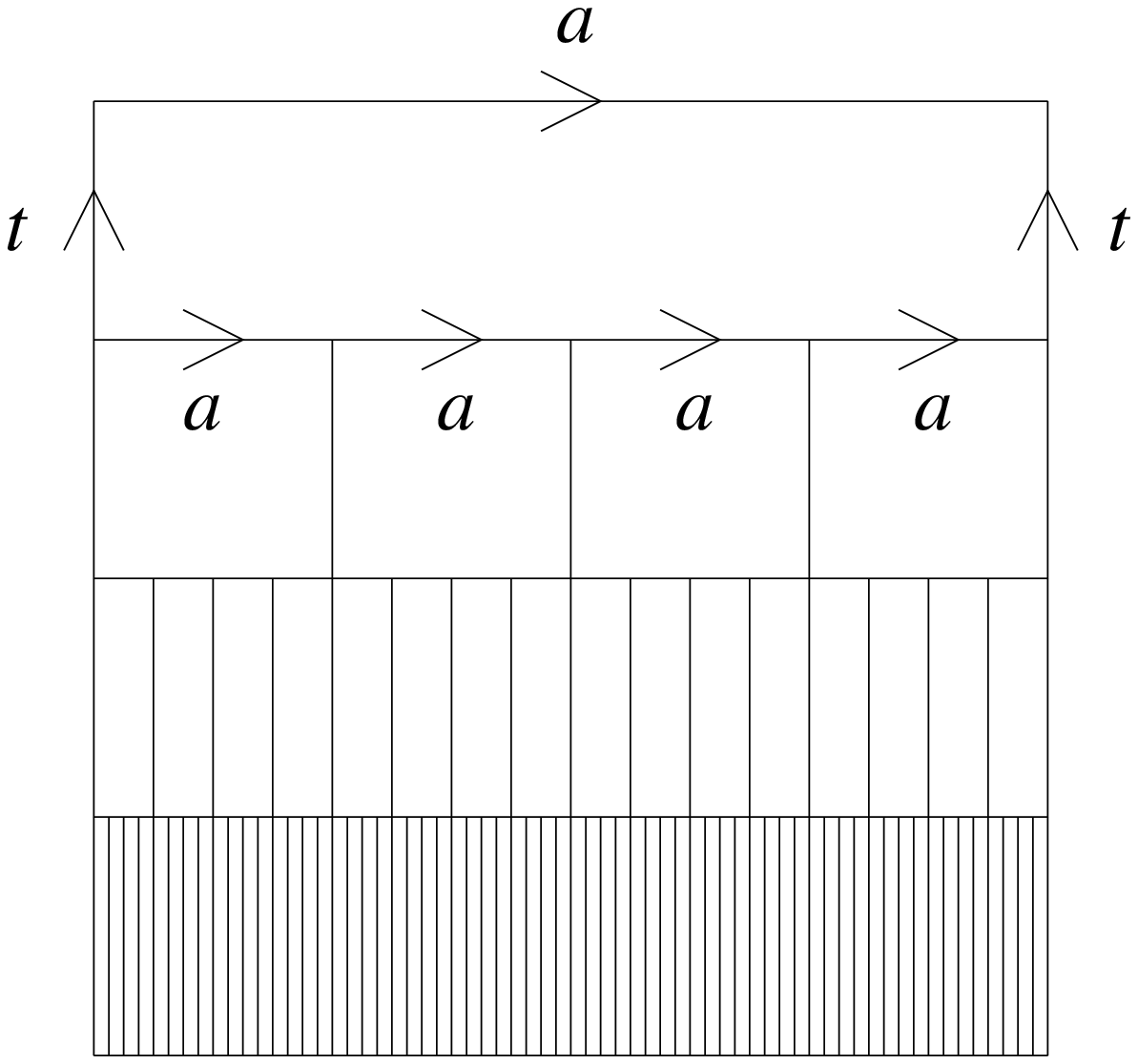}
&
\includegraphics[width=6cm, height=5.3cm]{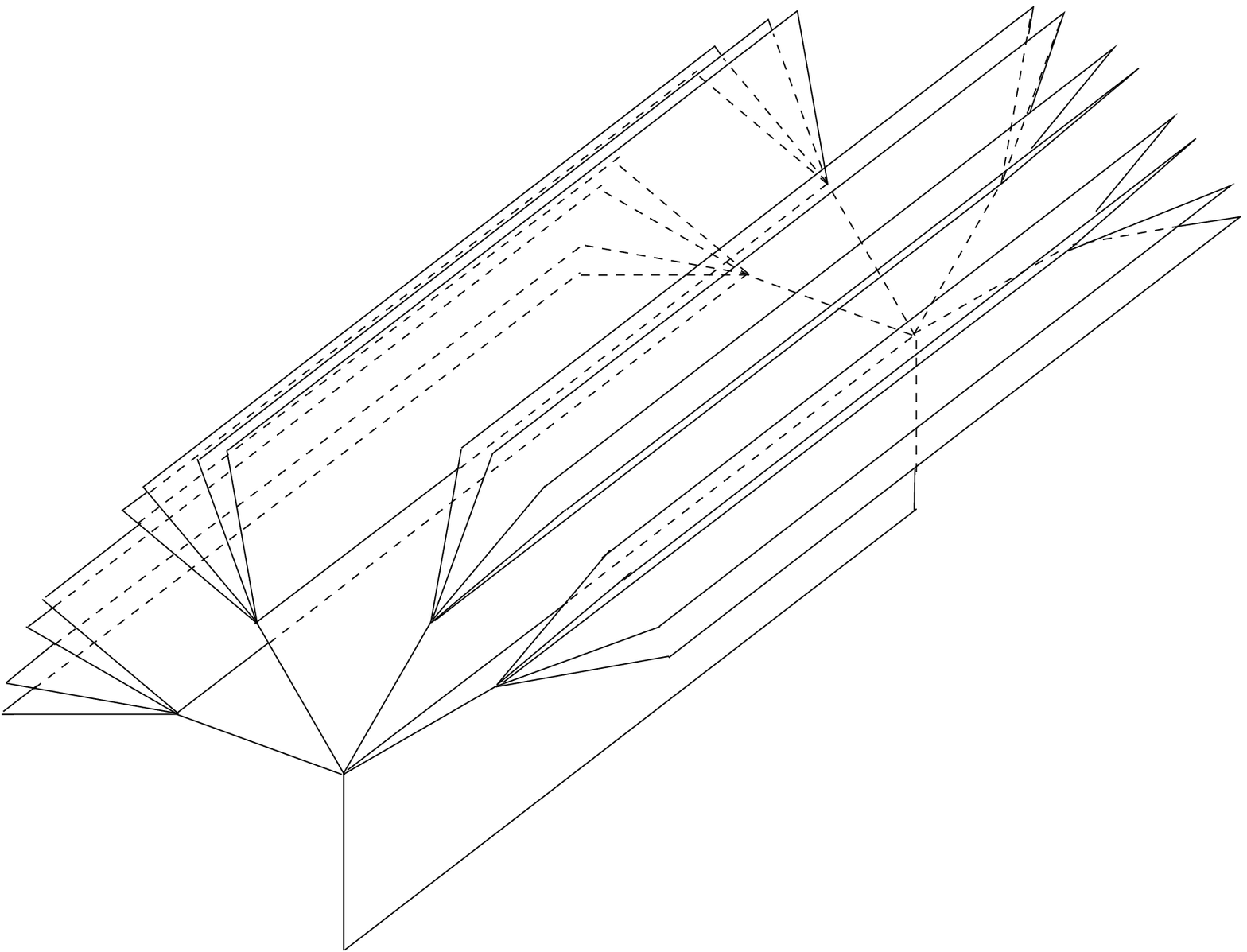} 
\et
\caption{A brick in a plane, and a side-on view of the  Cayley graph $\Gamma$ for $BS(1,4)$.}
\label{fig:brick}
\end{figure}
Let $\pi:\C \ra T$ be the horizontal projection map.
For an edge $e$ of $T$, $e$ inherits an upward direction from the
upward labels on the vertical edges of $\C$ that project
onto $e$.
More details can be found in \cite{\Epstein} (pages 154-160).

For any word $w \in A^*$, let
$\ow$ denote the image of $w$ in $BS(1,q)$.  For words $v,w \in
A^*$, denote $v=w$ if $v$ and $w$ are the same words in $A^*$, and
$v=_{\g} w$ if $\ov{v}=\ow$. Let 
$l(w)$ denote the word length of $w$ and let
$w(i)$ denote the 
prefix of the word $w$ containing $i$ letters.
Then $(w^{-1}(i))^{-1}$ is the suffix of $w$ of length $i$.
%point at distance $i\geq 0$ along the path labeled by $w$ from $1$. 
Define $\ts(v)$ to be the exponent sum of all occurrences of 
$t$ and $t^{-1}$ in $v$.  Note that 
relator $tat^{-1}a^{-q}$ in the presentation of $G$ satisfies
$\ts(tat^{-1}a^{-q})=0$; hence whenever
$v =_{\g} w$, then $\ts(v)=\ts(w)$.

The following Lemma is well-known; a proof can be
found in \cite{\Miller}.

\begin{lem}[Commutation]\label{lem:commute}
If $v,w \in A^*$ and $\ts(v)=0=\ts(w)$,
then $vw=_{\g}wv$.
\end{lem}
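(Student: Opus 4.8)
The plan is to exhibit a single abelian subgroup of $G$ that contains both $\ov{v}$ and $\ov{w}$. The excerpt already observes that $\ts$ is invariant under $=_{\g}$, so it descends to a well-defined homomorphism $\ts : G \ra \Z$. Since $\ts(v)=0=\ts(w)$, the elements $\ov{v}$ and $\ov{w}$ both lie in the kernel $N$ of this homomorphism. It therefore suffices to prove that $N$ is abelian, for then $\ov{v}\,\ov{w}=\ov{w}\,\ov{v}$, which is exactly $vw=_{\g}wv$.

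To analyze $N$, for each $i\in\Z$ I set $a_i := t^i a t^{-i}\in G$. Conjugating the defining relation $tat^{-1}=a^q$ by $t^{i-1}$ gives $a_i = a_{i-1}^{\,q}$ for every $i$, and iterating yields $a_i = a_k^{\,q^{\,i-k}}$ whenever $i\ge k$. From this I would deduce that any two of these elements commute: given $a_i$ and $a_j$, set $k=\min(i,j)$, so that $a_i$ and $a_j$ are both powers of the single element $a_k$ and hence commute. (Conceptually this is just the statement that $N\cong\Z[1/q]$ is abelian, with $a_i$ corresponding to $q^i$, but the elementary computation sidesteps that identification.)

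It then remains to check that $N$ is generated by the $a_i$, so that pairwise commutativity of the generators forces $N$ to be abelian. For this I would use a telescoping rewriting. Given any word $v=s_1s_2\cdots s_n$ in $A^*$ with $\ts(v)=0$, let $h_j := \ts(s_1\cdots s_j)$ be the running $t$-height, so $h_0=h_n=0$. Putting $c_j := t^{h_{j-1}} s_j\, t^{-h_j}$, the product $c_1c_2\cdots c_n$ telescopes to $t^{h_0} v\, t^{-h_n}=v$ in $G$; moreover each factor with $s_j=t^{\pm 1}$ collapses to the identity, while each factor with $s_j=a^{\pm 1}$ equals $a_{h_{j-1}}^{\pm 1}$. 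Thus $v=_{\g}\prod a_{h_{j-1}}^{\pm 1}$ lies in $\langle a_i : i\in\Z\rangle = N$, and likewise for $w$.

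The argument is essentially bookkeeping, so I do not anticipate a serious obstacle; the only point needing care is the telescoping step, where one must verify that the height data matches at both ends (precisely the hypothesis $h_0=h_n=0$, i.e.\ $\ts(v)=0$) and that the $t^{\pm 1}$ factors cancel. The commutativity of $N$ then reduces, as above, to the single observation that $a_i$ and $a_j$ are powers of a common $a_k$.
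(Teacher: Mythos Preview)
Your argument is correct. The paper does not actually supply its own proof of this lemma; it simply remarks that the result is well-known and cites \cite{\Miller} for a proof. Your approach---showing that the kernel $N$ of the $t$-exponent-sum homomorphism is abelian because it is generated by the conjugates $a_i=t^iat^{-i}$, any two of which are powers of a common $a_k$---is the standard one, amounting to the familiar fact that $BS(1,q)$ is metabelian with $N\cong\Z[1/q]$. The telescoping rewriting you give to show that every $\ts$-zero word lies in $\langle a_i:i\in\Z\rangle$ is clean and correct; there is nothing to add.
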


Let $E$ denote the set of words in $\{a,a^{-1}\}^*$, $P$ the words in
$\{a,a^{-1},t\}^*$ containing at least one $t$ letter, and $N$ the words
in $\{a,a^{-1},t^{-1}\}^*$ containing at least one $t^{-1}$ letter.  
A word $w=w_1w_2$ with $w_1 \in N$ and $w_2 \in P$, then, will be
referred to as a word in $NP$.  
Finally, let $X$ denote the subset of
the words in $PN$ with $t$-exponent sum equal to 0.
Letters in parentheses denote subwords that may or
may not be present; for
example, $P(X):=P \cup PX$.
The following statement is proved in \cite{\Groves}.

\begin{lem}[Classes of geodesics]  \label{lem:class}
A word $w \in A^*$ that is a geodesic in $\Gamma$ must fall into one
of four classes:
\begin{enumerate}
\item E or X,
\item N or XN,
\item P or PX,
\item NP, or NPX with $\ts(w) \ge 0$, or XNP with $\ts(w) \le 0$.
\end{enumerate}
\end{lem}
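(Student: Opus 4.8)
The plan is to argue by contradiction: I take a word $w$ that is geodesic in $\Gamma$ but whose pattern of $t^{\pm 1}$-letters does not match any of the four listed classes, and produce a word $w'$ with $\ov{w'}=_{\g}\ov{w}$ and $l(w')<l(w)$. First I translate the four classes into a statement about the \emph{$t$-pattern} of $w$, i.e.\ the subword formed by its $t^{\pm1}$-letters, read as maximal runs (syllables). The admissible patterns are exactly those with at most three syllables: the empty pattern and $t^j(t^{-1})^j$ (class 1); $(t^{-1})^k$ and $t^j(t^{-1})^{j+k}$ (class 2); $t^k$ and $t^{k+j}(t^{-1})^j$ (class 3); and the shapes $(t^{-1})^i t^k$, then $(t^{-1})^i t^{k+j}(t^{-1})^j$ with $\ts(w)\ge 0$ and $t^j(t^{-1})^{j+i}t^k$ with $\ts(w)\le 0$ (class 4). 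Thus the lemma is equivalent to two assertions: (I) a geodesic has at most three $t$-syllables, so its tree-projection reverses direction at most twice; and (II) a three-syllable geodesic of down-up-down type $NPX$ has $\ts\ge 0$, while one of up-down-up type $XNP$ has $\ts\le 0$.

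The engine is a small set of length controlling moves coming from the relator. Written as $a^{q}t =_{\g} ta$ and $at^{-1}=_{\g}t^{-1}a^{q}$, these let me slide a maximal $a$-run across an adjacent $t^{\pm1}$, multiplying or dividing its exponent by $q$; together with the Commutation Lemma (Lemma~\ref{lem:commute}) any subword with $\ts=0$ may be rearranged, so $a$-runs can be pushed toward whichever syllable is convenient. From these I extract the two decisive reductions: the \emph{pinch} $t^{-1}a^{qm}t =_{\g} a^{m}$, which deletes a ``returning'' valley and removes two $t$-letters, and the relocation of a ``returning'' peak $t^{j}a^{m}(t^{-1})^{j}$ — one that ascends and then descends back to the same vertex of $T$ — which, when it sits in the interior of $w$, can be merged into a neighbouring syllable by the slide relations. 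The aim throughout is to use slides to expose one of these two configurations inside any forbidden pattern.

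For assertion (I), I examine an innermost reversal of a pattern with three or more reversals. The key point is that such a reversal is flanked by $t^{\pm1}$-letters on both sides, so after sliding the intervening $a$-runs I can arrange that the short interior syllable cancels in the tree against parts of its neighbours, producing either a pinch or a relocatable returning peak; in every case $l(w)$ strictly drops, so no geodesic has four or more syllables. For assertion (II), I analyse the remaining three-syllable words, in which the single peak carries out the horizontal travel and the single valley crosses between subtrees at the bottom. The claim is that the peak must lie on the higher-level side of the valley: if a $NPX$ word has $\ts<0$, the net descent places the peak above the final level, and moving the horizontal travel to occur after the descent (using the slide relations and Lemma~\ref{lem:commute}) rewrites $w$ as an $XN$- or $NP$-type word of strictly smaller length; the $XNP$ case with $\ts>0$ is symmetric. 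What survives is precisely $NPX$ with $\ts\ge 0$ and $XNP$ with $\ts\le 0$, which, together with the unconstrained one- and two-syllable cases, gives the four classes.

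I expect the main obstacle to be the length bookkeeping in these reductions, and in particular making it uniform in $q$. Each slide across a $t$ multiplies an $a$-exponent by $q$, so a careless rewrite can trade a short excursion high in $T$ for a long one lower down; the estimate must therefore weigh the two $t$-letters saved by a pinch, or the syllable removed by relocation, against the resulting change in the $a$-count. The borderline instances are exactly those where the $a$-powers adjacent to the excursion are as small as possible, and these small cases must be checked by hand, as must the claim that a prior slide can always force the base $a$-content of an interior valley to be divisible by $q$ (so that a pinch is available) without lengthening the word. Once this delicate accounting is in place, assertions (I) and (II) follow by induction on the number of reversals, as sketched above.
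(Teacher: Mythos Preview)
The paper does not actually give a proof of this lemma; it quotes the statement from \cite{MR98b:20044} (Groves) and proceeds directly to Proposition~\ref{propn:normalform}. So there is no in-paper argument to compare against. That said, your plan is the standard one and is sound in outline: translating the four classes into constraints on the $t$-syllable pattern (at most three syllables, with the $\sigma_t$ sign condition on the three-syllable shapes) is exactly right, and you have correctly observed that the $\sigma_t$ inequality in the three-syllable case automatically forces the middle syllable to be the longest, so that ``down-up-down with $\sigma_t\ge 0$'' really is $NPX$ and ``up-down-up with $\sigma_t\le 0$'' really is $XNP$.

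Where your sketch is thin is precisely where you flag it: the length bookkeeping. Two points deserve more care. First, in assertion (I) you appeal to a ``pinch'' at an interior valley, but a valley $t^{-1}a^{s}t$ with $q\nmid s$ corresponds to changing branches in the tree $T$ and is \emph{not} removable by a pinch; you must argue separately that with four or more syllables there is necessarily an interior \emph{peak} (a $t\cdots t^{-1}$ segment flanked by further $t^{\pm1}$ on both sides), and then use the flanking letters to absorb the blown-up $a$-power when you collapse it. Second, your argument for (II) (``moving the horizontal travel to occur after the descent \ldots\ rewrites $w$ as an $XN$- or $NP$-type word of strictly smaller length'') hides the real difficulty: sliding an $a$-block downward multiplies its exponent by $q$, so the claimed strict length drop is not automatic. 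The honest argument compares the given word to the normal form of Proposition~\ref{propn:normalform} for the same element (which for $\sigma_t<0$ lies in $(X)N$ or $(X)NP$ with the $P$-part a bare power of $t$) and shows the latter is no longer; this is essentially what Groves does. Your outline would complete to a correct proof once those two estimates are nailed down.
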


Analyzing the geodesics more carefully, we find
a normal form for geodesics in the following Proposition.

\begin{propn}[Normal form]  \label{propn:normalform}
If $w \in A^*$ is a geodesic in $G=BS(1,q)$,
then there is another geodesic $\hat w \in A^*$
with $\ov{\hat w}=\ov{w}$ such that for $w$ in each class, $\hat w$
has the following form (respectively):
\begin{enumerate}

\item $\hat w=a^i$ for 
$|i|\leq C_q$ where
$C_q:=\lfloor \frac{q}{2}+1 \rfloor$ if $q>2$ and $C_2:=3$, 
or $\hat w =w_0 \in X$,

\item $\hat w=w_0t^{-1}a^{m_1} \cdots t^{-1}a^{m_e}$ with 
$|m_j| \le \lfloor \frac{q}{2} \rfloor$ for all $j$, $e \ge 1$, 
and either $w_0=a^i$ for $|i| \le C_q$ or
$w_0 \in X$,

\item $\hat w=a^{n_0}t \cdots a^{n_{f-1}}tw_0$ with 
$|n_j| \le \lfloor \frac{q}{2} \rfloor$ for
all $j$, $f \ge 1$, 
and either $w_0=a^i$ for $|i| \le C_q$ or $w_0 \in X$,

\item Either $\hat w=t^{-e}a^{m_f}ta^{m_{f-1}} \cdots a^{m_{1}}tw_0$
  with $1 \le e\leq f$, or 
  $\hat w=w_0t^{-1}a^{m_1} \cdots t^{-1}a^{m_e}t^f$
  with $1 \le f \leq e$, such that 
  $|m_j| \le \lfloor \frac{q}{2} \rfloor$ for all $j$,
  and either $w_0=a^i$ for $|i| \le C_q$ or $w_0 \in X$.  Note that if
  $\ts(w)=0$ then $e=f$ and either expression is valid.
\end{enumerate}
In every class the word $w_0 \in X$ can be chosen to be 
either of the form 
$w_0=t^{h}a^{s}t^{-1}a^{k_{h-1}} \cdots a^{k_{1}}t^{-1}a^{k_0}$
or
$w_0=a^{k_0}ta^{k_{1}} \cdots ta^{k_{h-1}}ta^st^{-h}$ 
with $|k_j| \le \lfloor \frac{q}{2} \rfloor$ for
all $j$, $1 \le |s| \le q-1$ if $q>2$,
$2 \le |s| \le 3$ if $q=2$, and $h \ge 1$.
\end{propn}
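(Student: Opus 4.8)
The plan is to start from an arbitrary geodesic $w$, use Lemma~\ref{lem:class} to fix the coarse ``tree shape'' of $w$ (the pattern of $t$ and $t^{-1}$ letters), and then normalize the intervening powers of $a$ using the defining relation together with Lemma~\ref{lem:commute}. The two identities I would use repeatedly are the consequences $ta^k =_{\g} a^{qk}t$ and $a^kt^{-1} =_{\g} t^{-1}a^{qk}$ of $tat^{-1}=_{\g}a^q$, which let me ``carry'' a power of $a$ across a $t$- or $t^{-1}$-letter while rescaling its exponent by a factor of $q$. Read backwards, a block $a^n$ sitting immediately before a $t$ (or after a $t^{-1}$) with $n=qc+r$ can be rewritten as $a^r$ together with a carry of $a^c$ pushed through the adjacent $t$-letter; this is precisely balanced $q$-ary carrying, and choosing the balanced remainder $r\in[-\lfloor q/2\rfloor,\lfloor q/2\rfloor]$ is what produces the exponent bounds $|m_j|,|n_j|,|k_j|\le\lfloor q/2\rfloor$ asserted in every class.

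The key steps, in order, are as follows. First I would dispose of a purely horizontal geodesic $a^i$ (the $t$-free part of class (1)): a direct length computation, comparing $i$ against the cost $2h+|s|$ of the efficient representative $t^{h}a^{s}t^{-h}$ obtained by iterating $a^q=_{\g}tat^{-1}$, shows that a geodesic of the form $a^i$ forces $|i|\le C_q$, and that once $|i|$ exceeds this threshold the shorter representative already lies in $X$; this is where $C_q=\lfloor q/2+1\rfloor$ (and its anomaly $C_2=3$) is pinned down. Second, for the monotone classes (words in $N$, $P$, and their $X$-decorated versions) I would induct along the sequence of $t$-letters, applying the carry move at each letter to drive every intermediate exponent into the balanced range while checking by a length count that each move is length-nonincreasing, so that the resulting $\hat w$ remains geodesic. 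Third, I would treat the zero-$\ts$ piece $w_0\in X$ separately: using Lemma~\ref{lem:commute} to slide commuting ($\ts=0$) subwords together, I would show $w_0$ can be arranged as a single ascent $t^{h}$, one horizontal shift $a^{s}$ at the apex, and a descent $t^{-1}a^{k_{h-1}}\cdots t^{-1}a^{k_0}$ (or the mirror form). The apex shift must be nonzero---otherwise the top $t\,t^{-1}$ cancels and $w$ was not geodesic---which gives $1\le|s|$, while geodesy together with the valence-$(q+1)$ branching of $T$ (a shift of $|s|\ge q$ at the apex would instead be absorbed into an extra $ta^{\pm1}t^{-1}$, raising the peak) caps $|s|$ at $q-1$.

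The main obstacle will be the second and third steps taken together: proving that the balanced carrying process is genuinely length-nonincreasing at every stage, and that the bounds $\lfloor q/2\rfloor$ and $q-1$ are exactly those realized by geodesics. The difficulty is that a carry performed at one $t$-letter alters the exponent feeding into the next, so the reductions interact across all levels of the tree simultaneously, and one must rule out the possibility that a locally balanced choice is globally suboptimal. The case $q=2$ is the delicate boundary, since there $\lfloor q/2\rfloor=1$ leaves almost no slack, forcing the special apex range $2\le|s|\le 3$ and the special constant $C_2=3$; I expect to settle this case by an explicit short computation separating when a top-level horizontal shift is cheaper than pushing the carry one further level down the descent.
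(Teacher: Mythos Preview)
Your approach is correct and essentially matches the paper's: both proceed class by class via Lemma~\ref{lem:class}, reduce intermediate $a$-exponents by carrying across $t$-letters using the defining relation, and invoke Lemma~\ref{lem:commute} to straighten the $NP$/$X$ pieces. Your worry about global suboptimality of local carries is unnecessary: the replacement $t^{-1}a^{\pm\lfloor q/2+1\rfloor}\mapsto a^{\pm1}t^{-1}a^{\mp\lceil q/2-1\rceil}$ (and its mirror) is exactly length-preserving for odd $q$, while for even $q$ the left side is already non-geodesic, so each carry preserves geodesy outright and no global accounting is needed.
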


\begin{proof} Note that
the natural number 
$q=\lfloor \frac{q}{2} +1 \rfloor+\lceil \frac{q}{2}-1 \rceil$.

For a geodesic $w$ in class (1), if $w \in E$, then
$w=a^i$ for some $i$.  
If $q=2$, then $a^{\pm 6}=ta^{\pm 3}t^{-1}$ so
$|i| \le 6$, and the words $a^{\pm(4+k)}$ have normal form
$ta^{\pm 2}t^{-1}a^{\pm k} \in X$ for $k=0$ and $k=1$.
If $q>2$, then the relation 
$tat^{-1}=_{\g} a^q$ can be reformulated as
$a^{\pm \lfloor \frac{q}{2}+1 \rfloor+1}=_{\g}
 ta^{\pm 1}t^{-1}a^{\mp(\lceil \frac{q}{2}-1 \rceil-1)}$.
If $q$ is even, then $a^{\pm \frac{q}{2}+2}$
is not geodesic, so $|i| \le \lfloor \frac{q}{2} +1 \rfloor$.
On the other hand, if $q$ is odd,
then  
$a^{\pm {q+1 \over 2}+2}=_{\g}
 ta^{\pm 1}t^{-1}a^{\mp({q-1 \over 2}-2)}$ so
$a^{\pm {q+1 \over 2}+2}$ is not geodesic; hence
$|i| \le \lfloor \frac{q}{2} +1 \rfloor+1$, and
the words
$a^{\pm \lfloor \frac{q}{2} +1 \rfloor+1}$ have a normal form in $X$.

Next suppose that $w$ is a geodesic in class (2).  
Then $w \in (X)N$, so 
$w=w_0't^{-1}a^{l_1}t^{-1}a^{l_2} \cdots t^{-1}a^{l_e}$
for some word $w_0'$ in class (1), $e \ge 1$,
and integers $l_i$.  
Again we reformulate the defining relation of $G$, in this case 
to $t^{-1}a^{\pm \lfloor \frac{q}{2} +1 \rfloor}=_{\g}
 a^{\pm 1}t^{-1}a^{\mp(\lceil \frac{q}{2}-1 \rceil)}$.
If $q$ is odd, then we may (repeatedly) replace any occurrence of 
$t^{-1}a^{\pm \lfloor \frac{q}{2} +1 \rfloor}$ by
$a^{\pm 1}t^{-1}a^{\mp(\lceil \frac{q}{2}-1 \rceil)}$.
If $q$ is even, 
$t^{-1}a^{\pm \lfloor \frac{q}{2} +1 \rfloor}$ is not geodesic, so
$|l_j| \le \lfloor \frac{q}{2} \rfloor$ for all $j$ and 
replacements are not needed.
In both cases, then, we obtain a geodesic word 
of the form 
$w_0''t^{-1}a^{m_1}t^{-1}a^{m_2} \cdots t^{-1}a^{m_e}$ with 
each $|m_j| \le \lfloor \frac{q}{2} \rfloor$ and $w_0''$
in class (1);
to form the normal form $\hat w$, then, replace $w_0''$ by
its normal form.

The proof of the normal form for geodesics in class (3)
is very similar, using the relation
$a^{\pm \lfloor \frac{q}{2} +1 \rfloor}t=_{\g}
 a^{\mp(\lceil \frac{q}{2}-1 \rceil)}ta^{\pm 1}$.

Suppose next that $w$ is a geodesic in class (4) 
with $\ts(w) \ge 0$.  Then
$w=t^{-1}a^{k_1} \cdots a^{k_{e-1}}t^{-1}a^{l_f}t 
a^{l_{f-1}} \cdots a^{l_{1}}t w_0'$
with $w_0'$ in class (1), $1 \le e < f$, and
each $k_j,l_i \in Z$.  First use 
Lemma \ref{lem:commute} to replace
$w$ by the geodesic word 
$$
t^{-e} a^{l_f}ta^{k_{e-1}+l_{f-1}} \cdots ta^{k_{1}+l_{f-e+1}}
ta^{l_{f-e}}\tilde w_0ta^{l_{f-e-1}} \cdots  a^{l_{1}}t w_0'.
$$ 
To complete construction
of the normal form $\hat w$ from this word, replace the
subword $a^{l_f}t \cdots  a^{l_{1}}t w_0'$ by its normal
form from class (3).

The constructions for the normal forms of geodesics $w$ in class
(4) with $\ts(w) \le 0$, and of geodesics $w_0 \in X$, are
analogous.
\end{proof}

%%%%%%%%%%%%%%%%%%%%%%%%%%%%%%%%%%%%%%%%%%%%%%%%%%%%%%%%%%%%%%%%%%%%%%%%
%%%%%%%%%%%%%%%%%%%%%%%%%%%%%%%%%%%%%%%%%%%%%%%%%%%%%%%%%%%%%%%%%%%%%%%%
%%%%%%%%%%%%%%%%%%%%%%%%%%%%%%%%%%%%%%%%%%%%%%%%%%%%%%%%%%%%%%%%%%%%%%%%
%%%%%%%%%%%%%%%%%%%%%%%%%%%%%%%%%%%%%%%%%%%%%%%%%%%%%%%%%%%%%%%%%%%%%%%%
%%%%%%%%%%%%%%%%%%%%%%%%%%%%%%%%%%%%%%%%%%%%%%%%%%%%%%%%%%%%%%%%%%%%%%%%
%%%%%%%%%%%%%%%%%%%%%%%%%%%%%%%%%%%%%%%%%%%%%%%%%%%%%%%%%%%%%%%%%%%%%%%%

%%%%%%%%%%%%%%%%%%%%%%
\section{The group BS(1,2) satisfies \mpac}
%%%%%%%%%%%%%%%%%%%%%%

Let $G:=BS(1,2)=\langle a,t~|~tat^{-1}=a^2 \rangle$ with generators
$A:=\{a,a^{-1},t,t^{-1}\}$.  
In this section we prove, in Theorem \ref{thm:BS12mac}, that
this group is \mpac.  
We begin with a further analysis of the geodesics in $G$, via
several lemmas which are utilized in many of the cases in the proof
of Theorem \ref{thm:BS12mac}.

\begin{lem}[Large geodesic]\label{lem:rlarge}
If $w$ is a geodesic of length $r>200$ 
in one of the classes (1), (2), or (3) of 
Proposition \ref{propn:normalform} and $|\ts(w)| \le 2$, 
then $w$ is in either $X$, $XN$, or $PX$, respectively.
Moreover, the $X$ subword of $w$ must have the form
$w_1w_2$ with $w_1 \in P$ and  $w_2 \in N$ such that
$\ts(w_1)=-\ts(w_2)>10$.
\end{lem}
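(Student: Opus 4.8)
The plan is to strip off, in each class, the bounded purely--descending or purely--ascending end of $w$, and then show that the surviving $X$ subword is forced to climb more than $10$ levels. I would first record the standing fact that every subword of a geodesic is itself a geodesic: if $u$ is a subword of $w$ and $u'$ is shorter with $\ov{u'}=\ov{u}$, then replacing $u$ by $u'$ produces a representative of $\ow$ shorter than $w$. With this in hand I treat the three classes. In class (1), $w\in E$ or $w\in X$; an $E$--geodesic is a power $a^i$, and since $a^{\pm 6}=_{\g}ta^{\pm 3}t^{-1}$ is shorter we have $|i|\le 5$, so $l(w)\le 5<200$ rules out $E$ and gives $w\in X$. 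In class (2), $w\in N$ or $w\in XN$; if $w\in N$ then $w$ is a geodesic over $\{a,a^{-1},t^{-1}\}$ with at most $-\ts(w)\le 2$ occurrences of $t^{-1}$, so its class--(2) normal form from Proposition \ref{propn:normalform} has length at most $C_2+2\bigl(1+\lfloor \tfrac q2\rfloor\bigr)=3+4=7<200$, a contradiction; hence $w\in XN$. Class (3) is symmetric and gives $w\in PX$.

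Next I would isolate the $X$ subword. Writing $w=xn$ in class (2) with $n\in N$ the $N$--suffix (respectively $w=px'$ in class (3)), the factor $n$ is a suffix of a geodesic, hence a geodesic, and $\ts(n)=\ts(w)$ so $|\ts(n)|\le 2$; by the previous paragraph $l(n)\le 7$, and therefore the $X$ subword $x$ satisfies $l(x)\ge r-7>193$ (in class (1) one simply has $x=w$ and $l(x)>200$). Since $\ts(x)=0$ and $X\subseteq PN$, write $x=w_1w_2$ with $w_1\in P$ and $w_2\in N$; then $p:=\ts(w_1)=-\ts(w_2)$ is simultaneously the number of $t$'s and the number of $t^{-1}$'s in $x$, and it remains only to prove $p>10$.

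The final, most delicate, step bounds the $a$--blocks of the geodesic $x$. The relations $a^{\pm 2}t=_{\g}ta^{\pm 1}$ and $t^{-1}a^{\pm 2}=_{\g}a^{\pm 1}t^{-1}$ show that no geodesic contains $a^{\pm 2}t$ or $t^{-1}a^{\pm 2}$, so in $w_1=a^{n_0}ta^{n_1}\cdots ta^{n_p}$ and $w_2=a^{m_0}t^{-1}a^{m_1}\cdots t^{-1}a^{m_p}$ every block satisfies $|n_i|\le 1$ for $i<p$ and $|m_j|\le 1$ for $j\ge 1$; the only unconstrained block is the single ``peak'' $a^{n_p+m_0}$ lying between the innermost $t$ and $t^{-1}$. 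For the peak I use that $ta^ct^{-1}=_{\g}a^{2c}$ and that for $|c|$ beyond a small constant $B$ the element $a^{2c}$ has a strictly shorter representative (pushing further $t$--letters through according to the $2$--adic valuation of $2c$), which may be substituted into $x$ because nothing lies above the peak; hence $|n_p+m_0|\le B$. Counting, $x$ then has at most $2p+B$ occurrences of $a^{\pm 1}$, so $l(x)=2p+(\text{number of }a^{\pm1}\text{'s})\le 4p+B$, whence $p\ge (l(x)-B)/4>(193-B)/4>10$. I expect the peak estimate to be the genuine obstacle: one must verify that the shortening of $a^{2c}$ really does decrease $l(x)$ in context and pin down the constant $B$ (for $q=2$ one checks $B=5$). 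The large slack in the resulting bound, $p\gg 10$, reflects the deliberately loose thresholds $r>200$ and $10$ in the statement.
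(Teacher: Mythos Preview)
Your argument is correct in outline but works considerably harder than the paper does.  The paper's proof rests on a single crude observation: since $a^{\pm 6}=_G ta^{\pm 3}t^{-1}$, no geodesic contains $a^{\pm 6}$ as a subword, so \emph{every} maximal $a$-block in \emph{any} geodesic has length at most $5$.  From this alone a geodesic in $E\cup N\cup P$ with $|\ts|\le 2$ has at most two $t^{\pm 1}$-letters and at most three $a$-blocks, hence length $\le 17$, which rules out $E$, $N$, $P$ directly on the word $w$ itself without passing through normal forms.  The ``Moreover'' clause (which the paper in fact leaves implicit) follows by the same counting: the $X$-subword has $2h$ letters $t^{\pm 1}$ and at most $2h+1$ blocks of $a$'s, so length $\le 12h+5$, and since its length exceeds $200-17$ one gets $h>10$ with room to spare.

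Your route instead bounds the $N$- (or $P$-) tail via its Proposition~\ref{propn:normalform} normal form, and then for the $X$-subword invokes the sharper fact that $a^{\pm 2}t$ and $t^{-1}a^{\pm 2}$ are non-geodesic to cap every non-peak block at $1$.  This is valid and yields tighter constants ($l(x)\le 4p+B$ versus $12p+5$), but the precision is unnecessary given the generous thresholds, and it introduces two avoidable wrinkles.  First, your bound $l(n)\le 7$ via the normal form tacitly assumes the $w_0$-prefix lands in $E$; for $q=2$ the normal-form construction can push $w_0$ into $X$ (e.g.\ $a^5\mapsto ta^2t^{-1}a$), so a word of justification is needed there---the direct $a$-block count sidesteps this entirely.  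Second, your ``genuine obstacle'' at the peak dissolves immediately: the peak block $a^{n_p+m_0}$ is itself a subword of the geodesic $x$, hence geodesic, hence of length $\le 5$ by the same $a^{\pm 6}$ fact---no substitution argument or $2$-adic analysis is required, and $B=5$ falls out for free.
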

\begin{proof}
Suppose that $w$ is a geodesic in either $E$, $N$, or $P$ of
length $r>200$, and $|\ts(w)| \le 2$.  Then $w$ contains
at most two occurrences of the letters $t$ and $t^{-1}$.
As mentioned in the proof of Proposition \ref{propn:normalform},
$a^{\pm 6}=ta^{\pm 3}t^{-1}$ so $a^j$ is not geodesic
for $|j| \ge 6$.  Hence $w$ contains at most 15 occurrences
of the letters $a$ and $a^{-1}$ interspersed among the
$t^{\pm 1}$ letters.  Then $l(w) \le 17$, giving a contradiction.
\end{proof}

Given a word $w_0 \in X$, there is a natural number $k \in \N$ with
$w_0 =_{\g} a^k$; denote $\tilde w_0:= a^k$.
If $w$ is a geodesic word in $E \cup N \cup P \cup NP$, then
let $\tilde w := w$.  Combining these,
for any geodesic word $w=w_0w_1$ (or $w=w_1w_0$) with
$w_0 \in X$ and $w_1 \in N \cup P \cup NP$, define 
$\tilde w:=\tilde w_0 w_1 = a^kw_1$
(or $\tilde w:=w_1 \tilde w_0=w_1a^k$, respectively).  
Then $\tilde w \in N \cup P
\cup NP$, and the subword $w_1$ is geodesic.

\begin{lem}\label{lem:notgeod}
If $w$ is a word in $NP$, $NPX$ or $XNP$,
and $\tilde w$
contains a subword of the form $t^{-1}a^{2i}t$ with
$i\in \mathbb Z$, then $w$ is not geodesic.
\end{lem}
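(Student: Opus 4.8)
The plan is to exhibit a length-reducing substitution directly inside $w$, using the defining relation of $BS(1,2)$. First I would record the consequence of the relator that drives everything: since $tat^{-1}=_{\g}a^2$, conjugating gives $t^{-1}a^2t=_{\g}a$, and hence $t^{-1}a^{2i}t=_{\g}a^i$ for every $i\in\Z$. A subword $t^{-1}a^{2i}t$ has length $2+2|i|$ but represents the same group element as $a^i$, which has length $|i|<2+2|i|$; so replacing one by the other strictly shortens any word that contains it.

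The key observation is that although the hypothesis places the subword $t^{-1}a^{2i}t$ inside $\tilde w$ rather than $w$, this subword must in fact lie inside the $NP$ factor of $w$, and so is a genuine subword of $w$ itself. Recall from the definition preceding the statement that $\tilde w$ is obtained from $w=w_0w_1$ (or $w=w_1w_0$) by replacing the $X$-subword $w_0$ by the power $\tilde w_0=a^k$, where $k$ is the integer with $w_0=_{\g}a^k$, while the factor $w_1\in NP$ is left untouched. Since $\tilde w_0=a^k$ contains no $t^{\pm1}$ letters, every occurrence of $t$ or $t^{-1}$ in $\tilde w$ lies in the copy of $w_1$.

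I would then argue according to the position of $w_0$, in each case pinning down an extremal $t^{\pm1}$ letter of the subword. In the $NP$ case $\tilde w=w$ and there is nothing to check. If $w=w_1w_0\in NPX$, then $\tilde w=w_1a^k$, and since the subword $t^{-1}a^{2i}t$ ends in a $t$ that must lie in $w_1$, the whole contiguous subword lies in the prefix $w_1$. Symmetrically, if $w=w_0w_1\in XNP$, then $\tilde w=a^kw_1$, and the leading $t^{-1}$ forces the entire subword into the suffix $w_1$. In all cases $t^{-1}a^{2i}t$ is a subword of $w$, so the substitution $t^{-1}a^{2i}t\mapsto a^i$ produces a word equal to $w$ in $\g$ of length $l(w)-2-|i|<l(w)$, whence $w$ is not geodesic.

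There is no serious obstacle here; the only point requiring care — and the point worth stating explicitly — is the bookkeeping that confines the subword to the $w_1$ factor. Because passing from $w_0$ to $a^k$ can actually \emph{increase} word length (for example $w_0=ta^3t^{-1}=_{\g}a^6$ when $q=2$), one cannot simply compare $l(\tilde w)$ with $l(w)$; the shortening must be carried out inside $w$ itself, which is exactly what the $t$-letter counting above licenses.
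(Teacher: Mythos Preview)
Your proof is correct and follows essentially the same approach as the paper: both arguments observe that since the $X$-factor becomes a pure power of $a$ under the $\tilde{\ }$ operation, the subword $t^{-1}a^{2i}t$ must lie entirely inside the untouched $NP$ factor $w_1$ of $w$, whence replacing it by $a^i$ shortens $w$ itself. Your version is slightly more explicit in pinning down the extremal $t^{\pm1}$ letter and in noting why one cannot simply compare $l(\tilde w)$ with $l(w)$, but the core idea is identical.
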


\begin{proof}
The word $w$ can be written as $w=w_0w_1w_2$ with
$w_1 \in NP$ and each of $w_0$ and $w_2$ either in
$X$ or $E$.  
Since $\tilde w=\tilde w_0  w_1 \tilde w_2$ contains
the subword
$t^{-1}a^{2i}t \in NP$, 
the word $t^{-1}a^{2i}t$ must be a subword of $w_1$, and
hence also of $w$.
Since $t^{-1}a^{2i}t =_{\g} a^i$, this subword is
not geodesic, and hence $w$ also is not geodesic.
\end{proof}

\begin{lem}\label{lem:britton}
If $w$ is any word in $NP$ or $NPN$ and $w=_{\g}1$,
%and possibly with $X$ subwords as well, 
then $w$ must contain a subword of the form 
$t^{-1}a^{2i}t$ for some $i\in \mathbb Z$. 
%that is not contained in a geodesic subword of $w$.
\end{lem}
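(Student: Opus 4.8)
The plan is to use the fact that $\g=BS(1,2)$ is an \hnn{} of the base group $\langle a\rangle$, with stable letter $t$ and associated subgroups $A=\langle a\rangle$ and $B=\langle a^2\rangle$ identified by $a\mapsto a^2$. The only group facts I need are the two consequences of the defining relation, $t a^j t^{-1}=_\g a^{2j}$ for every $j$, and $t^{-1}a^{2i}t=_\g a^i$. In particular the desired subword $t^{-1}a^{2i}t$ is exactly a reducible pinch, so Britton's lemma becomes the natural engine: since $w=_\g 1$ forces $\ts(w)=0$, and $w$ contains at least one $t$ and at least one $t^{-1}$ by the hypothesis $w\in NP\cup NPN$, the standard Britton's lemma guarantees that $w$ contains a reducible pinch, namely a subword $t a^j t^{-1}$ or a subword $t^{-1}a^{2i}t$. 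The whole task then reduces to producing a pinch of the second kind.

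First I would record the pattern of $t$-signs read from left to right: a word in $NP$ gives a block of $t^{-1}$'s followed by a block of $t$'s, while a word in $NPN$ gives three blocks with signs minus, plus, minus. Since a pinch can only occur where two consecutive $t$-letters have opposite signs, in the $NP$ case the single ``valley'' subword $t^{-1}a^mt$ between the two blocks is the \emph{only} possible pinch location. As Britton forces a reducible pinch and this is the only candidate, $m$ must be even, so $w$ already contains $t^{-1}a^{2i}t$ and the $NP$ case is complete with no induction.

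For the $NPN$ case I would induct on the number of $t$-letters of $w$. Now there are exactly two candidate pinch locations, the valley $t^{-1}a^mt$ opening the middle block and the peak $t a^{m'}t^{-1}$ closing it. If Britton's reducible pinch is at the valley, $m$ is even and we are done as above; so suppose it is the peak. Letting $a,b,c$ denote the lengths of the three sign blocks, the definitions of $N$ and $P$ give $a,c\ge 1$ and $b\ge 1$, and $\ts(w)=0$ yields the crucial equality $b=a+c\ge 2$. Because $b\ge 2$, the first $t$ of the middle block is distinct from its last $t$, so replacing the peak subword $t a^{m'}t^{-1}$ by $a^{2m'}$ produces a word $w'$ with $w'=_\g 1$, two fewer $t$-letters, valley left untouched, and sign blocks of lengths $a,\,b-1,\,c-1$; the middle block cannot vanish, so $w'$ lies in $NP\cup NPN$. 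Its valley coincides letter-for-letter with the valley of $w$, so by the inductive hypothesis (or the $NP$ case) $w'$ contains $t^{-1}a^{2i}t$ at that valley, whence this subword appears already in $w$.

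I expect the main obstacle to be the bookkeeping in the $NPN$ case, where one must verify that collapsing the peak never destroys the valley and never shrinks the middle block to nothing. The point is that a fully collapsed middle block would leave a word in $N$, with strictly negative $t$-exponent sum, which cannot equal $1$; the inequality $b=a+c\ge 2$ arising from $\ts(w)=0$ is precisely what forbids these degeneracies. Thus the argument turns entirely on pairing Britton's lemma with the exponent-sum count, and requires no further use of the geometry of $\Gamma$.
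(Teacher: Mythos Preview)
Your argument is correct, but it takes a different route from the paper's. For $w\in NP$ both proofs are identical: Britton's lemma forces the unique valley $t^{-1}a^mt$ to be reducible, so $m$ is even. For $w\in NPN$ the paper avoids induction entirely: writing $w=w_1w_2w_3$ with $w_1,w_3\in N$ and $w_2\in P$, the exponent-sum condition $\ts(w)=0$ gives $\ts(w_2)>|\ts(w_3)|$, so $w_2w_3$ can be split as $w_4w_5$ with $w_4\in P$ and $w_5\in X$; replacing $w_5$ by $\tilde w_5=a^k$ yields $w_1w_4\tilde w_5\in NP$ representing $1$, and the $NP$ case forces a $t^{-1}a^{2i}t$ inside the prefix $w_1w_4$ of $w$. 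Your approach instead collapses the peak $ta^{m'}t^{-1}\mapsto a^{2m'}$ and inducts on the number of $t$-letters, using $b=a+c\ge 2$ to ensure the valley survives each collapse. The paper's argument is quicker and uses machinery (the tilde operation and the class $X$) already set up elsewhere; yours is more self-contained and needs nothing beyond Britton's lemma and the exponent-sum count.

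One small remark: in the $NPN$ step your dichotomy ``Britton's pinch is at the valley / at the peak'' is slightly artificial, since the peak $ta^{m'}t^{-1}$ is \emph{always} reducible (every $a^{m'}$ lies in the base $\langle a\rangle$), so Britton's lemma in the $NPN$ case gives no information by itself. What actually drives your induction is simply that the peak can always be collapsed; Britton only bites once you reach the $NP$ base case. This does not affect correctness, but you may want to rephrase the $NPN$ paragraph to reflect that the peak-collapse is unconditional.
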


\begin{proof}
Since $G=BS(1,2)$ is an HNN extension, 
Britton's Lemma states that if $w \in NP(N)$ and $w=_{\g}1$, 
then $w$ must contain a
subword of the form $ta^it^{-1}$ or $t^{-1}a^{2i}t$ for some $i\in
\mathbb Z$. If $w \in NP$ then $w$ must contain the second
type of subword. 

If $w \in NPN$, then $w=w_1w_2w_3$ with $w_1,w_3 \in N$ and
$w_2 \in P$.  Since $\ts(w_1)<0$ and 
$0=\ts(1)=\ts(w_1)+\ts(w_2)+\ts(w_3)$, 
$\ts(w_2) > \ts(w_3)$ and the word $w_2w_3 \in PX$.
Then $w_2w_3=w_4w_5$ with $w_4 \in P$ and $w_5 \in X$,
and the word $w_1 w_4 \tilde{w_5} \in NP$ with
$w_1 w_4 \tilde{w_5} =_{\g} w =_{\g} 1$.  Then Britton's Lemma 
applies again to show that the prefix $w_1w_4$ of $w$
must contain a subword of the form 
$t^{-1}a^{2i}t$ for $i\in \mathbb Z$.
%
%Finally, since $t^{-1}a^{2i}t =_{\g} a^i$, the subword
%$t^{-1}a^{2i}t$ of $w$ is no geodesic.
\end{proof}

\comment{Prove lemma \ref{lem:twonp} using geometry instead?}

\begin{lem}\label{lem:twonp}
If $w$ and $u$ are geodesics, $w \in NP \cup XNP \cup NPX$,
$\ts(w) \le \ts(u)$, 
and $1 \le d(\ow, \ou) \le 2$, then
$u \in NP \cup XNP \cup NPX$ and for some
$w_1,u_1 \in N$ and $w_2,u_2 \in P$ with
$\ts(w_1)=\ts(u_1)$,
$\tilde w =w_1w_2$ and 
$\tilde u = u_1u_2$.
\end{lem}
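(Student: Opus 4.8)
The plan is to pass to the isomorphism $\g = BS(1,2)\cong \Z[1/2]\rtimes\Z$, under which $a=(1,0)$, $t=(0,1)$, and $(x,m)(y,n)=(x+2^my,\,m+n)$; here the second coordinate of an element is exactly its $t$-exponent sum. The geometric content of ``descending to depth $e$'' is then captured by a single arithmetic invariant: the $2$-adic valuation $v_2(x)$ of the horizontal coordinate. I would first record, using Proposition~\ref{propn:normalform} together with Lemma~\ref{lem:notgeod}, the dictionary between the four geodesic classes and this valuation. Concretely, if $u$ is geodesic with $\ou=(x,m)$, then the minimum of $\ts$ along $u$ equals $\min(0,m,v_2(x))$, and $u$ lies in $NP\cup XNP\cup NPX$ precisely when $v_2(x)<\min(0,m)$; in that case the $N$-part $u_1$ of $\tilde u$ satisfies $\ts(u_1)=v_2(x)$. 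The one point needing care here is that the $a$-power at the turn of a class-(4) normal form is odd --- this is exactly Lemma~\ref{lem:notgeod}, that $t^{-1}a^{2i}t$ is not geodesic --- so the denominator produced at the bottom of the descent is exactly $2^{e}$ and $v_2(x)=-e=\ts(u_1)$.

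Granting this dictionary, the hypotheses translate cleanly. Writing $\tilde w=w_1w_2$ with $w_1\in N$, $w_2\in P$ and $\ow=(x_w,m_w)$, the assumption $w\in NP\cup XNP\cup NPX$ gives $v_2(x_w)=\ts(w_1)<\min(0,m_w)$; in particular $v_2(x_w)<m_w$. From $1\le d(\ow,\ou)\le2$ there is a word $v$ with $|v|\le 2$ and $\ou=\overline{wv}=\ow\,\overline v$, and from $\ts(w)\le\ts(u)$ we get $\ts(v)\ge0$. If $\overline v=(\xi,\mu)$ then $\ou=(x_w+2^{m_w}\xi,\;m_w+\mu)$, so $x_u=x_w+2^{m_w}\xi$ and $m_u=m_w+\mu\ge m_w$.

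The heart of the argument is then a single ultrametric estimate. I claim every $v$ of length at most $2$ with $\ts(v)\ge0$ has $v_2(\xi)\ge0$: an $a^{\pm1}$ contributes $\pm2^{h}$ to $\xi$, where $h$ is the running $t$-sum before that letter, and to make $h<0$ with only two letters one would need a $t^{-1}$ preceding an $a^{\pm1}$, forcing $\ts(v)<0$. Hence $v_2(2^{m_w}\xi)=m_w+v_2(\xi)\ge m_w>v_2(x_w)$. Since the two summands of $x_u$ have distinct valuations, the nonarchimedean triangle equality gives $v_2(x_u)=v_2(x_w)$ with no cancellation. It is precisely the hypothesis $\ts(w)\le\ts(u)$, equivalently $\ts(v)\ge0$, that prevents the otherwise possible $v_2(\xi)=-1$ and the resulting collapse of the leading $2$-adic term.

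Finally I would feed $v_2(x_u)=v_2(x_w)$ back through the dictionary. Since $v_2(x_u)=v_2(x_w)<0$ and $v_2(x_u)=v_2(x_w)<m_w\le m_u$, we have $v_2(x_u)<\min(0,m_u)$, so $u$ too lies in $NP\cup XNP\cup NPX$, and its $N$-part satisfies $\ts(u_1)=v_2(x_u)=v_2(x_w)=\ts(w_1)$, which is the assertion. The main obstacle I anticipate is not the ultrametric step but the bookkeeping in the first paragraph: turning Proposition~\ref{propn:normalform} into the precise equality $\ts(u_1)=v_2(x)$ for every geodesic in class (4), including the $XNP$ and $NPX$ variants, where the $X$-blocks must be shown not to alter $v_2$ --- they have $\ts=0$ and represent powers of $a$, hence fix the horizontal valuation once the turn has occurred. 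An alternative, more in the spirit of Lemmas~\ref{lem:notgeod}--\ref{lem:britton}, would avoid the valuation entirely and instead apply Britton's Lemma to $wvu^{-1}=_\g 1$ to force the descent depths of $w$ and $u$ to agree; I expect that route to be longer because of the case analysis on $v$, which is why I would organize the proof around $v_2$.
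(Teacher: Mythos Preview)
Your argument is correct and takes a genuinely different route from the paper's. The paper stays entirely at the level of words: it applies Britton's Lemma (via Lemma~\ref{lem:britton}) to $\tilde w\gamma\tilde u^{-1}=_G 1$ to force $\tilde u\in NP$, and then a second time to $u_2^{-1}\widetilde{u_1^{-1}w_1}\,w_2\gamma=_G 1$ --- after collapsing the $PN$ word $u_1^{-1}w_1$ --- to force $\widetilde{u_1^{-1}w_1}\in E$, hence $\ts(w_1)=\ts(u_1)$. Your approach replaces these two Britton steps by a single arithmetic observation in $\Z[\tfrac12]\rtimes\Z$: the $2$-adic valuation of the horizontal coordinate is an invariant of the element that exactly records the descent depth $\ts(w_1)$, and the ultrametric inequality shows the short connector $\gamma$ cannot change it. What the paper's method buys is that it needs no new machinery --- it plugs directly into Lemmas~\ref{lem:notgeod} and~\ref{lem:britton} and avoids the bookkeeping of your dictionary; what your method buys is a cleaner conceptual invariant, a one-line core argument, and no case analysis on $\gamma$. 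Your identification of the only substantive step in the dictionary --- that the exponent at the turn of a class~(4) normal form is odd, which is precisely Lemma~\ref{lem:notgeod} for $q=2$ --- is accurate, and with that the equality $\ts(u_1)=v_2(x)$ follows by the ultrametric computation you sketch. Incidentally, the Britton-based alternative you mention at the end, which you expected to be longer, is exactly the paper's argument, and it is in fact quite short since one never needs to enumerate the possibilities for $\gamma$: only that $\gamma\in E\cup P$ is used.
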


\begin{proof}
The definition of $\tilde w$ shows that we can write
$\tilde w =w_1w_2$ with $w_1 \in N$ and $w_2 \in P$.
Let $\gamma$ label a path of length 1 or 2 from 
$\ow$ to $\ou$; since $\ts(w) \le \ts(u)$,
then $\gamma \in E \cup P$.
Proposition \ref{propn:normalform} says that 
$\tilde u \in E \cup P \cup N \cup NP$.
Since $w$ is a geodesic, 
Lemma \ref{lem:notgeod} implies that 
$\tilde w$ cannot contain a
subword of the form $t^{-1}a^{2i}t$ for any integer $i$.
Then Lemma \ref{lem:britton} says that the word
$\tilde w \gamma \tilde u^{-1}$, which represents 
the trivial element 1 in $\g$, cannot be in
$NP(N)$.  Therefore
$\tilde u \not\in E \cup P \cup N$, so $\tilde u \in NP$.
Hence $u \in NP \cup XNP \cup NPX$.

We can now write $\tilde u=u_1u_2$ with $u_1 \in N$ and
$u_2 \in P$.  The word
$\tilde u^{-1} \tilde w \gamma=u_2^{-1}u_1^{-1}w_1w_2\gamma$ is
another representative of 1.
Repeatedly reducing subwords $ta^jt^{-1}$ to $t^{2j}$ in
the subword $u_1^{-1}w_1 \in PN$ results in 
a word $\widetilde{u_1^{-1}w_1} \in E \cup P \cup N$.
Then $1=_{\g} u_2^{-1}\widetilde{u_1^{-1}w_1}w_2\gamma \in NP(N)$,
so this word must contain a 
subword of the form $t^{-1}a^{2i}t$ for some integer $i$.
Since $w$ and $u$ are geodesics, $w_1w_2$ and $u_2^{-1}u_1^{-1}$ 
cannot contain such a subword.  Therefore we must
have $\widetilde{u_1^{-1}w_1} \in E$.  Hence
$\ts(w_1)=\ts(u_1)$.
\end{proof}

We split the proof of Theorem \ref{thm:BS12mac} into 10 cases, depending on the classes from Proposition
\ref{propn:normalform} to which the two geodesics $w$ and $u$ belong.
In overview,
we begin by showing that the first three cases cannot occur;
that is, for a pair of length $r$
geodesics $w$ and $u$ in the respective classes
in these three cases, it is not possible for $d(\ow,\ou)$ to be
less than three.
In cases 4-6, we show that a path $\dd$ can be found that
travels from $\ow$ along the path $w^{-1}$ to within a distance 2 of
the identity vertex, and, after possibly
traversing an intermediate edge, $\dd$ then travels 
along a suffix of $u$ to $\ou$.
In case 7 we show that the path $\dd$ can be chosen to
have length at most six, traveling around at most two bricks
in the Cayley complex.  In case 8 there are subcases in which
each
of the two descriptions above occur, as well as a subcase in which
the path $\dd$ initially follows the inverse of a suffix of $w$ from $\ow$,
then travels along a path that ``fellow-travels'' this initial
subpath, and then repeats this procedure by traversing
a fellow-traveler of a suffix of $u$, and then traveling
along the suffix itself to $\ou$.  In cases 9 and 10,
the paths $\dd$ constructed in each of the subcases follow
one of these three patterns.

\begin{thm}\label{thm:BS12mac}
The group $G=BS(1,2)=\langle a,t ~|~ tat^{-1}=a^2\rangle$ is 
\mpac\  with respect to the 
generating set $A=\{a,a^{-1},t,t^{-1}\}$.  In particular,
if $w$ and $u$ are geodesics of length $r>200$ with 
$1 \le d(\ow,\ou) \le 2$, then there is a path 
$\delta$ inside $B(r)$ from
$\ow$ to $\ou$ of length at most $2r-2$.
\end{thm}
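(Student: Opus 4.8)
The plan is to prove that $BS(1,2)$ is $M'AC$ by exhibiting, for any pair of length-$r$ geodesics $w,u$ with $1 \le d(\ow,\ou) \le 2$, an explicit path $\delta$ inside $B(r)$ of length at most $2r-2$. The natural strategy, foreshadowed in the case-analysis overview, is to build $\delta$ by \emph{backtracking}: travel from $\ow$ along $w^{-1}$ back toward the identity, and from $\ou$ along $u^{-1}$ back toward the identity, meeting somewhere deep inside the ball. If one can descend from each endpoint by roughly $r-1$ steps and join the two backtracked paths by a short bridge near the origin, the total length is about $2(r-1)$, which is exactly the $2r-2$ bound we need. So I would organize the proof around the ten cases (sorted by which of the four geodesic classes $w$ and $u$ lie in), and in each case either rule out the configuration or construct such a $\delta$.

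\textbf{First steps.} I would begin by using Proposition~\ref{propn:normalform} to replace $w$ and $u$ by their normal forms $\hat w, \hat u$ (with the same endpoints), so that the $a$-syllables are bounded by $\lfloor q/2\rfloor = 1$ and the structure is rigid. I would then dispose of the ``impossible'' cases (the first three in the overview) by a parity/$t$-exponent argument: if $w$ and $u$ lie in certain pairs of classes, then $\ts(w)=\ts(u)$ forces, via the Commutation Lemma~\ref{lem:commute} and Lemma~\ref{lem:notgeod}, a forbidden subword $t^{-1}a^{2i}t$ in $w^{-1}\gamma u$, contradicting geodesy. Here Lemma~\ref{lem:rlarge} is the key input: since $r>200$ and $|\ts(w)|\le 2$ would make $w$ too short, at least one of $w,u$ must carry a substantial $X$-subword, i.e.\ a large ``up-then-down'' excursion in the tree $T$ whose peak lies far from the identity. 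That high peak is precisely what lets the two geodesics come within distance $2$ while both having length $r$, and it is also what one exploits when routing $\delta$.

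\textbf{The construction cases.} For the cases where $\delta$ can be built, I would use Lemma~\ref{lem:twonp} to align the $N$-prefixes of $\tilde w$ and $\tilde u$: it guarantees $\tilde w = w_1 w_2$, $\tilde u = u_1 u_2$ with $w_1,u_1 \in N$ and $\ts(w_1)=\ts(u_1)$, so the two paths descend to the \emph{same} height in the tree before turning around. The route for $\delta$ is then: follow $w^{-1}$ down to within distance $2$ of the point where $w_1$ and $u_1$ synchronize, cross over (traversing at most a bounded number of intermediate edges, using the relation $a^2 =_G tat^{-1}$ to move between the two strands of bricks), and then follow $u_1 u_2$ up to $\ou$. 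Each step of this path stays inside $B(r)$ because moving \emph{toward} the origin along a geodesic suffix only decreases distance, and the bridge near the synchronization height is short. Counting lengths, the two backtracked portions together have length at most $2r$ minus the portions saved by sharing the synchronized prefix, and the bounded bridge keeps us under $2r-2$.

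\textbf{The main obstacle.} The hard part will be the length bookkeeping in the mixed cases (especially cases 8--10, where one or both words carry $X$-subwords). There the ``fellow-traveling'' subpaths described in the overview replace a naive suffix-backtrack, and I must verify simultaneously that (i) $\delta$ never leaves $B(r)$ — which requires controlling the height of every vertex along the fellow-traveler in the Cayley complex $\C \cong \mathbb{R}\times T$ — and (ii) the total length genuinely comes in at $2r-2$ rather than $2r-1$ or $2r$. The savings of that final $-2$ is delicate: it comes from the forced overlap that Lemma~\ref{lem:twonp} provides together with the strict inequality $\ts(w_1)>10$ from Lemma~\ref{lem:rlarge}, which guarantees a genuinely nontrivial shared descent. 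I expect the bulk of the proof to be the careful, case-by-case accounting that this shared descent is always long enough to absorb the cost of the intermediate bridge, with the $r>200$ hypothesis used throughout to ensure the peaks and synchronization heights are far enough from the boundary sphere for the routing to be valid.
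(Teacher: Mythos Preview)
Your high-level plan matches the paper's: ten cases by geodesic class, the first three ruled out, the rest handled by explicit $\delta$'s. But your account of \emph{how} the cases are handled is off in ways that would derail the execution.

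First, Lemma~\ref{lem:twonp} is not the universal synchronization tool you make it. It is invoked only when a word is in class~(4); in Cases~4--6 the $-2$ savings comes from something much simpler: the normal forms share a literal \emph{prefix} (the letter $t$, or $a^it$), so $\delta$ backtracks along $w$ to $\ov{w(1)}=\ov{u(1)}$ and then follows the suffix of $u$, giving length exactly $2r-2$. No ``shared descent'' via $\ts(w_1)=\ts(u_1)$ is used there. In Case~7 (both in class~(2)) there is no backtracking toward the identity at all: $\delta$ is a short word of length at most $6$ going around one or two bricks, and one checks vertex-by-vertex that it stays in $B(r)$. Your uniform ``descend, bridge, ascend'' picture does not cover this.

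Second, in Cases~8--10 the fellow-traveling $\delta$ is not obtained by following suffixes and inserting a short bridge. It is an explicit word such as $\delta=t^p a^{-2s} t^{-p} a^j t^p a^{2s} t^{1-p}$ (Case~8.1), whose equality to $\gamma$ is a nontrivial Commutation-Lemma computation. Containment in $B(r)$ is verified by identifying four or five vertices along $\delta$ and proving identities like $w\delta(p+2)=_G w(r-(p+2))t^{-1}$, then arguing that the arcs between these checkpoints are too short to leave $B(r)$. The length bound comes from an inequality of the form $l(\delta)=4p+4$ versus $r\ge 2p+4$, not from any overlap guaranteed by Lemma~\ref{lem:twonp} or the bound $\ts(w_1)>10$ from Lemma~\ref{lem:rlarge}. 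Your plan does not produce these paths, and without them Cases~8.1, 9.2, 10.2.4, and 10.3 have no construction.

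Finally, the impossible cases are dispatched via Britton's Lemma (Lemma~\ref{lem:britton}), which forces a $t^{-1}a^{2s}t$ subword into $\tilde w\gamma\tilde u^{-1}\in NP(N)$; Lemma~\ref{lem:notgeod} then locates it inside the geodesic $\tilde w$. You cite the second lemma but not the first, and the ``$\ts(w)=\ts(u)$'' mechanism you describe is not what drives the contradiction.
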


\begin{proof}
Suppose that $w$ and $u$ are geodesics of length $r>200$ with $1 \le
d(\ow,\ou) \le 2$.  Using Proposition \ref{propn:normalform}, by
replacing $w$ and $u$ by 
%one of their normal forms
$\hat w$ and $\hat u$ respectively, we may assume that
each of $w$ and $u$ are in one of the normal forms listed in that
Proposition.  
Using Lemma \ref{lem:rlarge}, we may assume that
neither $w$ nor $u$ is in $E$.

Let $\gamma$ be the word labeling a geodesic path of length at most
$2$ from $\ow$ to $\ou$, so that $w \gamma u^{-1}=_{\g} 1$.  
Since $d(\ow,\ou) \ge 1$,
$$ \gamma \in \{a^{\pm 1},t^{\pm 1}, a^{\pm 2}, at^{\pm
1},a^{-1}t^{\pm 1}, ta^{\pm 1}, t^{-1}a^{\pm 1}, t^{\pm 2} \}.$$ 
Then $\gamma$ is in one of the sets $E$, $P$, or $N$.

We divide the argument into ten cases, depending on the 
class of the normal forms 
$w$ and $u$ from Proposition \ref{propn:normalform}, 
which we summarize in the following table.

\begin{table}[ht!]
\[
\bt{|l|l|l|l|l|l|l|}
\hline
Case & Class of $w$ & Class of $u$ && Case & Class of $w$ & Class of $u$ \\
\hline
Case 1:  & (4) & (1) && Case 6:  & (3) & (3) \\
\hline
Case 2:  & (4) & (3)   && Case 7:  & (2) & (2) \\
\hline
Case 3:  & (2) & (3) && Case 8:  & (1) & (3) \\
\hline
Case 4:  &  (1) & (1) &&  Case 9:  & (2) & (4) \\
\hline
Case 5:  & (1) & (2) && Case 10:  & (4) & (4) \\
\hline
\et
\]
\end{table}

This
table represents a complete list of the cases to be checked.
In particular, if $w$ is in class (2) and $u$ in class (1), 
then the inverse
of the path in Case 5 will provide the necessary path $\dd$,
and similarly for the remainder of the cases.

\bcase{\it 1: If $w$ is in class $(4)$ and $u$ is in class $(1)$:}  
Then $w$ is in either $NP$, $NPX$ or $XNP$, 
and $u \in X$.  
Since $\tilde w \in NP$, $\tilde u \in E$, and
the path $\gamma$ is either in  $E$, $N$ or $P$, then 
$1 =_{\g} w\gamma u^{-1} =_{\g} \tilde w \gamma \tilde u^{-1} \in NP(N)$
(that is, replacing the $X$ subwords of $w$ and $u$
by powers of $a$).  
By Lemma \ref{lem:britton}, 
$\tilde w \gamma \tilde u^{-1}$ 
contains a subword of
the form $t^{-1}a^{2s}t \in NP$, 
which therefore must occur within $\tilde{w}$.
Then Lemma \ref{lem:notgeod} says that
$w$ is not a geodesic, which is a
contradiction. Hence Case 1 cannot hold.

\bcase{\it 2: If $w$ is in class $(4)$ and $u$ is in class $(3)$:}  
Then $w$ is in either $NP$, $XNP$ or $NPX$, 
and $u \in P(X)$.  In this case
$1 =_{\g} \tilde w \gamma \tilde u^{-1} \in NPN$, and
the same proof as in Case 1 shows that Case 2
cannot occur.

\bcase{\it 3: If $w$ is in class $(2)$ and $u$ is in class $(3)$:}  
Then $w \in (X)N$ and $u\in P(X)$. 
Since $\ts(w)<0$ and $\ts(u)>0$ then we must
have $\ts(w)=-1$, $\ts(u)=1$, and $\gamma=t^2$.  
Lemma \ref{lem:rlarge} says that $w \in XN$, and $u \in PX$.
Since $w$ is in normal form, $w=\hat w=w_0t^{-1}a^i$ 
with $|i|\leq 1$ and $w_0 \in X$, and similarly $u=a^jtu_0$
with $|j| \leq 1$ and $u_0 \in X$. 
Then $1 =_{\g} 
%w\gamma u^{-1} =_{\g} 
\tilde w\gamma \tilde u^{-1} =_{\g} 
\tilde w_0 t^{-1}a^it^2 \tilde u_0^{-1} t^{-1}a^{-j} \in NPN$.
Lemma \ref{lem:britton} then says that
$w_0 t^{-1}a^it^2 \tilde u_0^{-1} t^{-1}a^{-j}$ contains
a subword of the form $t^{-1}a^{2s}t$ for some $s \in \Z$, 
so $i$ must be a multiple of $2$, and hence $i=0$.
Using the last part of Proposition \ref{propn:normalform},
we can further write the normal form for 
$w_0 \in X$ as $w_0=w_1t^{-1}$, so $w=w_it^{-2}$.
Then $u =_{\g} w \gamma =_{\g} w(r-2)$, contradicting
the hypothesis that $u$ is a geodesic word of length $r$.
Thus Case 3 does not hold.

\bcase{\it 4: If both $w$ and $u$ are in class $(1)$:}  
Then 
$w$ and $u$ are both in $X$.  From Proposition \ref{propn:normalform}
the normal forms $w=\hat w$ and $u=\hat u$ can be chosen of the form
$\hat w=t^hw_1$ and $\hat u=t^iu_1$ with $h,i >0$ and $w_1,u_1 \in N$.
Then $w$ and $u$ have a common prefix
$t=w(1)=u(1)$, and the path $\delta:=w_1^{-1}t^{-(h-1)}t^{i-1}u_1$
from $\ow$ through $\ov{w(1)}$ to $\ou$ has length
$2r-2$ and stays inside $B(r)$.

\bcase{\it 5: If $w$ is in class $(1)$ and $u$ is in class $(2)$:}
Then $w \in X$ and $u \in (X)N$.
In this case $\ts(w)=0$, 
$\ts(\gamma)=\ts(w^{-1}u)=\ts(w)+ \ts(u)=\ts(u)$, 
and $\ts(u)<0$, 
so $\ts(u)$ is either
$-1$ or $-2$.  
The hypothesis that $r>200$ and Lemma \ref{lem:rlarge} 
imply that
$u \in XN$.
Then both of the normal forms
$\hat w$ and $\hat u$ can be chosen to begin with $t$,
and the same proof as in Case 4 gives the path $\delta$.

\bcase{\it 6: If both $w$ and $u$ are in class $(3)$:}  In this case both
$w$ and $u$ are in $P(X)$.  Without
loss of generality assume that $\ts(w) \le \ts(u)$, so
$\ts(\gamma) \ge 0$ and $\gamma \in E \cup P$.
Since both $w$ and $u$ are in normal form,
$w=a^itw_1w_0$ and $u= a^jtu_1u_0$
with $w_1, u_1 \in P \cup E$; $w_0,u_0 \in X \cup E$; and
$|i|,|j|\leq 1$.  Then
$1 =_{\g} u^{-1}w\gamma =_{\g}
\tilde u_0^{-1}u_1^{-1}t^{-1}a^{i-j}tw_1\tilde w_0\gamma \in NP$.
By Lemma \ref{lem:britton},
$u_0^{-1}u_1^{-1}t^{-1}a^{i-j}tw_1\tilde w_0\gamma$ 
has a subword of the form $t^{-1}a^{2s}t$, so $i-j$ is a
multiple of $2$ and hence either $i=j$ with $0 \le |i| \le 1$
or $i=-j$ with $|i|=1$.

If $i=j$ then $w$ and $u$ have a common
prefix $a^it=w(1+|i|)=u(1+|i|)$.  The path
$\dd:=w_0^{-1}w_1^{-1}u_1u_0$ from $\ow$ follows the
suffix $w_1w_0$ of $w$ backward to $\ov{w(1+|i|)}$ and
then follows the suffix $u_1u_0$ of $u$ to $\ou$, remaining
in $B(r)$.

If $i=-j$ with $|i|=1$, define the path 
$$
\delta:=w_0^{-1}w_1^{-1}a^{-i}u_1u_0
=_{\g} w_0^{-1}w_1^{-1}t^{-1}a^{-i}a^{-i}tu_1u_0=w^{-1}u=_{\g}\gamma.
$$
Then $\delta$ labels a path 
of length $2r-3$, traveling along $w^{-1}$ from $\ow$ to 
$\ov{w\dd(r-2)}=\ov{w(2)}$, then along a single edge to 
$\ov{w\delta(r-1)}=\ov{u(2)}$, and finally along a suffix
of $u$ to $\ou$, thus remaining in $B(r)$.
(See Figure \ref{fig:case6}.)
\begin{figure}
\bt{ccc}
\includegraphics[height=4.7cm]{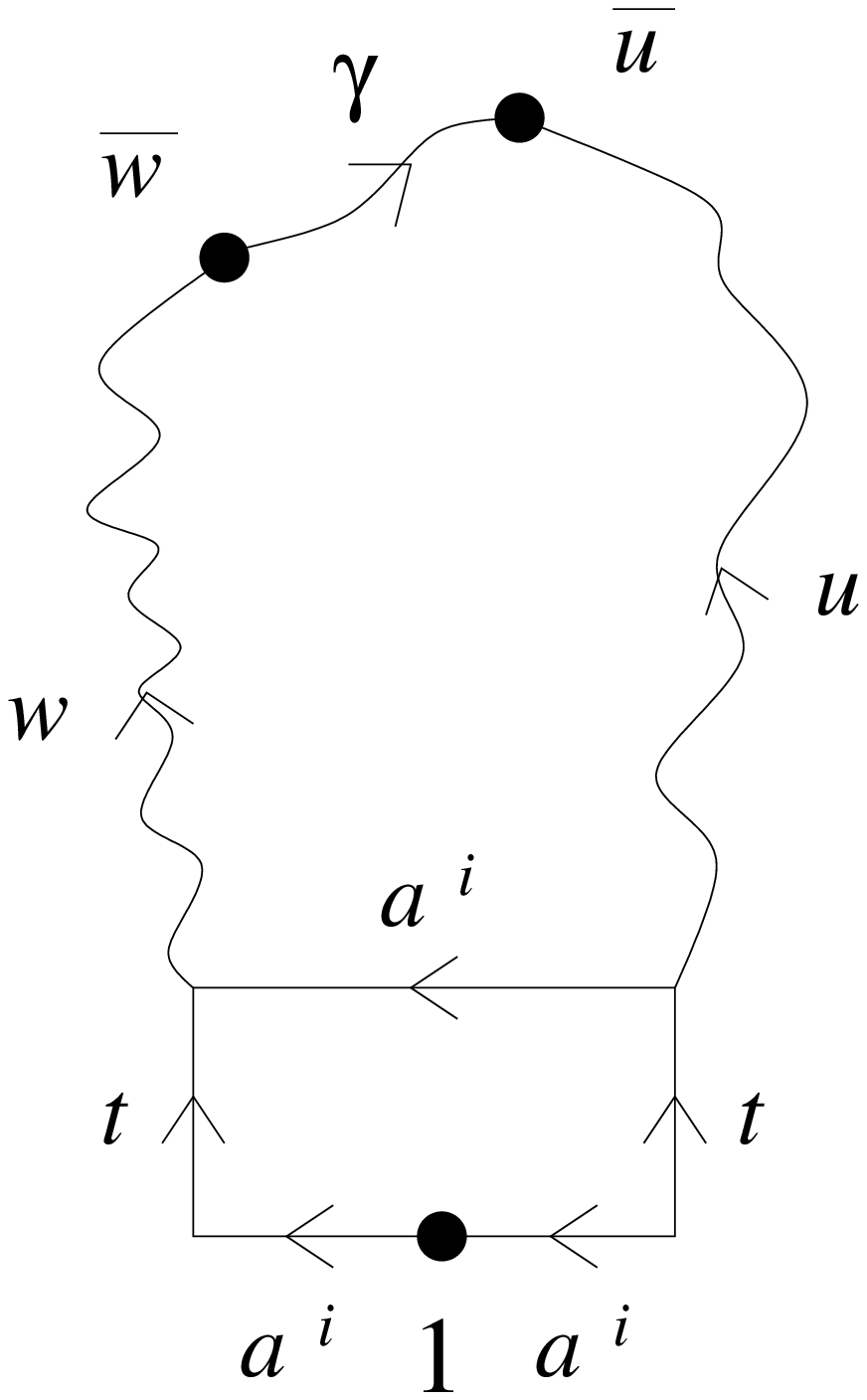} 
& \hspace{5mm} &
 \includegraphics[height=4.7cm]{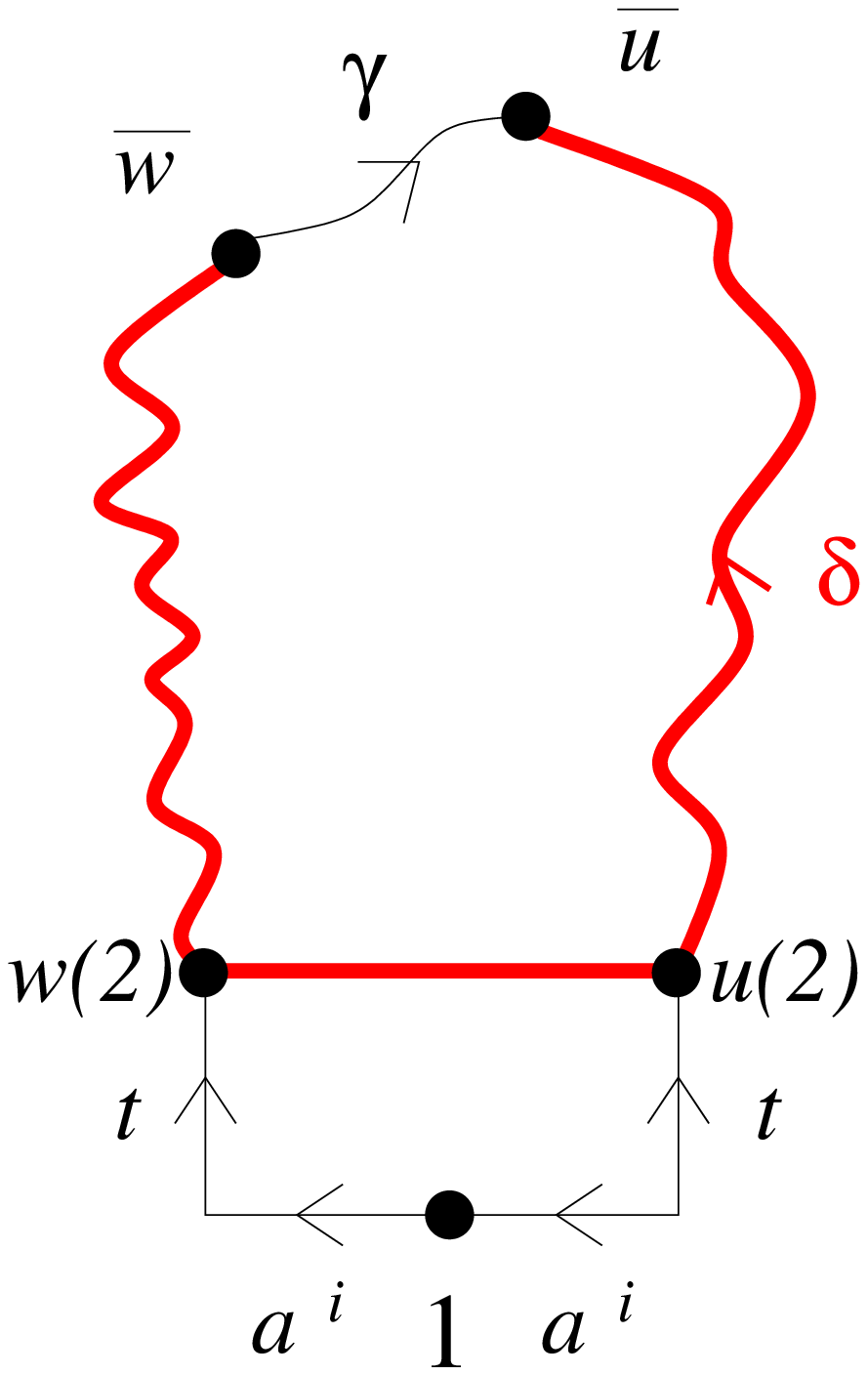} \et
\caption{Case 6:  $w=a^itw_1w_0$, $u=a^{-i}tu_1u_0$.}
\label{fig:case6}
\end{figure}

\bcase{\it 7: If both $w$ and $u$ are in class $(2)$:}  In this case, both
$w$ and $u$ are in $(X)N$.
We can assume without loss of
 generality that $\ts(w) \leq \ts(u)$, so again
$\ts(\gamma) \geq 0$ and $\gamma \in E \cup P$.
% so $\gamma \in \{a^{\pm 1}, a^{\pm 2}, t, 
% a^{\pm 1}t, ta^{\pm 1}, t^2\}$.

From 
%the normal forms in 
Proposition \ref{propn:normalform} we have
$w=w_0w_1t^{-1}a^i$ and $u=u_0u_1t^{-1}a^j$ with 
$w_0,u_0 \in X \cup E$;
$w_1,u_1 \in N \cup E$; and $|i|,|j|\leq 1$.  
Thus 
$1 =_{\g} 
%w \gamma u^{-1} =_{\g}
\tilde w \gamma \tilde u^{-1}= 
\tilde w_0w_1t^{-1}a^i \gamma a^{-j}tu_1^{-1}\tilde u_0^{-1} \in NP$. 
By Lemma \ref{lem:britton} the latter contains a subword of the form
$t^{-1}a^{2s}t$, and so $t^{-1}a^i\gamma a^{-j}t$ must contain this
subword. 

Since $\gamma \in E \cup P$, then 
$\gamma \in \{t, t^2, ta^{\pm 1}, a^{\pm 1}t, a^{\pm 1}, a^{\pm 2}\}$,
and we
may divide the argument into four subcases.

\bcase{\it 7.1: If $\gamma \in \{t,a^{\pm 1}t\}$:} 
Then $t^{-1}a^{2s}t$ must be a subword of
$t^{-1}a^i\gamma$.  
If $\gamma=t$, then since $|i| \le 1$ we have $i=0$
and
$w=w_0w_1t^{-1}$, so
$u =_G w \gamma =_{\g} w(r-1)$.  
If $\gamma=a^{\pm 1}t$, then
$|i|=1$, and 
$\gamma=a^{\pm i}t$.  
If $\gamma=a^it$, then 
$u =_{\g} w\gamma =w(r-2)t^{-1}a^ia^it
=_{\g} w(r-2)a_i$.  Finally, if
$\gamma=a^{-i}t$, then 
$u =_{\g} w\gamma =w(r-2)t^{-1}a^ia^{-i}t=_{\g} w(r-2)$.
All three of these options result in a contradiction
of the fact that $u$ is a geodesic of length $r$, so
subcase 7.1 can't occur.

\bcase{\it 7.2: If $\gamma \in \{t^2,ta^{\pm 1}\} $:} 
In this subcase, $t^{-1}a^{2s}t$ must be a subword of
$t^{-1}a^it$ again, so
$i=0$ and $w=w_0w_1t^{-1}$.  Note that $\gamma(1)=t$ and
$w \gamma(1) =_{\g} w(r-1)$.  
Then $\gamma$ is a path
of length 2 inside $B(r)$ from $w$ to $u$.  In this
subcase, we may define the path $\delta:=\gamma$.

\bcase{\it 7.3: If $\gamma \in \{a^{\pm 1}\}$:} 
%Referred to in case 9
Write $\gamma=a^{k}$ with $|k|=1$.
Recall that $0 \le |i| \le 1$.

If $i=0$, then $t^{-1}a^{2s}t$ must be a subword
of $t^{-1}a^{k}a^{-j}t$, so $2|(k-j)$ and $|j|=1$.
Then $\gamma=a^{\pm j}$.  If $\gamma=a^j$, then
$w=_{\g} u \gamma^{-1} = u(r-2)t^{-1}a^ja^{-j} 
=_{\g} u(r-2)t^{-1}$, contradicting the length $r$
of the geodesic $w$.  Thus $\gamma=a^{-j}$.
The word $\delta:=ta^{-j}t^{-1}a^j=_{\g} a^{-j}$
labels a path from $\ow$ to $\ou$ of length 4.
Since $ w \delta(1) =_{\g} w(r-1)$ and 
$w \delta(2) =_{\g} u(r-2)$, the path $\delta$ stays
inside $B(r)$, and hence satisfies the required properties.
(See Figure \ref{fig:case7}.)
\begin{figure}
\bt{ccc}
\includegraphics[height=4.5cm]{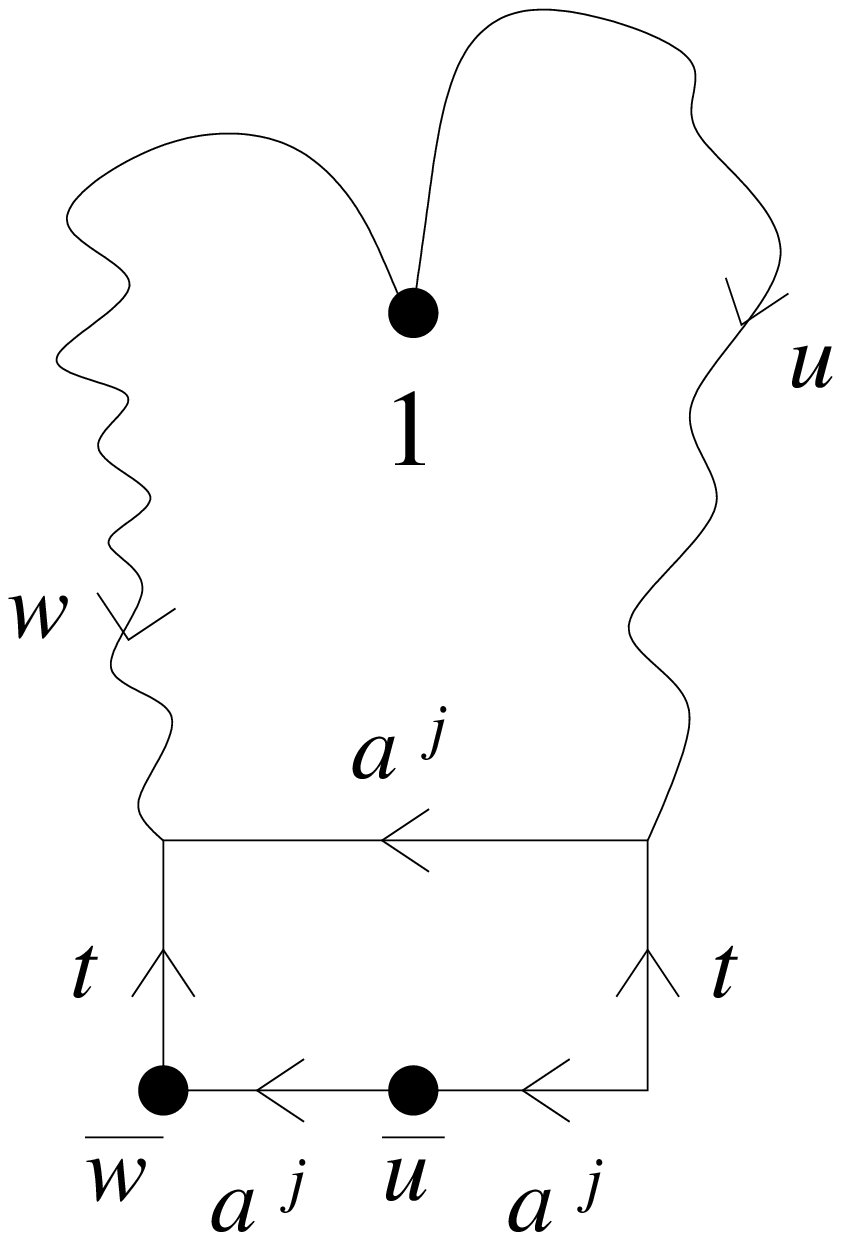} 
& \hspace{5mm} &
 \includegraphics[height=4.5cm]{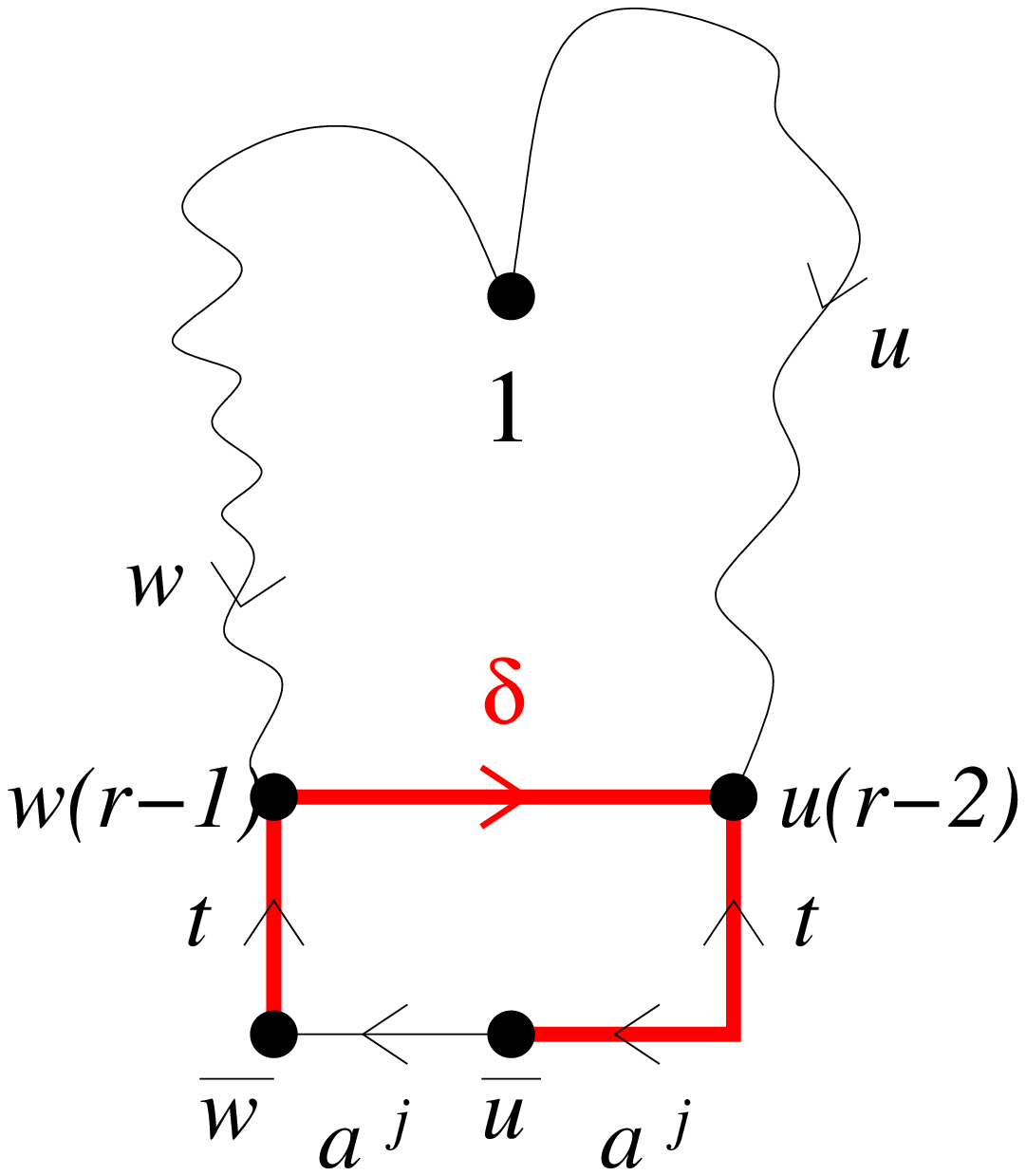} \et
\caption{Case 7.3:  $w=w_0w_1t^{-1}, u=u_0u_1t^{-1}a^j$ .}
\label{fig:case7}
\end{figure}

If $|i|=1$, then we can write $\gamma=a^{\pm i}$.
If $\gamma=a^{- i}$, then $u =_{\g} w \gamma =_{\g} w(r-1)$,
again giving a contradiction; hence $\gamma=a^{i}$.
Note that the word $t^{-1}a^{2s}t$ must be a subword
of $t^{-1}a^{2i}a^{-j}t$, so $2|(2i-j)$ and $j=0$.
Defining $\delta:=a^{-i}ta^{i}t^{-1}=_{\g} a^i$, then $\delta$
labels a path of length 4 from $\ow$ to $\ou$,
with $w\delta(2)=_{\g}w(r-2)$ and $w\delta(3) =_{\g} u(r-1)$,
so the path remains in $B(r)$ as required.

\bcase{\it 7.4: If $\gamma \in \{a^{\pm 2}\}$:}  
Write $\gamma=a^{2k}$ with $|k|=1$.  As in the previous
subcase, we consider the options $i=0$ and $|i|=1$ in 
separate paragraphs.

If $i=0$, then $t^{-1}a^{2s}t$ must be a subword
of $t^{-1}a^{2k}a^{-j}t$, so $j=0$.  Then the length 3 path
labeled by $\delta:=ta^kt^{-1}$ from
$\ow$ to $\ou$ traverses the
vertices represented by $w \delta(1)=_G  w(r-1)$ and
$w \delta(2)=_G u(r-1)$, hence remaining in $B(r)$.

If $|i|=1$, then $\gamma=a^{\pm 2i}$.
If $\gamma=a^{-2i}$, then
$w \gamma(1) =_{\g} w(r-1)$, so
we may define $\delta:=\gamma$.

If $|i|=1$ and $\gamma=a^{2i}$, then 
$t^{-1}a^{2s}t$ must be a subword
of $t^{-1}a^{3i}a^{-j}t$.  Thus $|j|=1$, so $j = \pm i$.
If $j=i$, then $w \gamma(1) =_{\g} u(r-1)$, so 
again
the path $\delta:=\gamma$ has the required properties.
If $j=-i$, then the path of length 6 labeled by
$\delta:=a^{-i}ta^{2i}t^{-1}a^{-i} =_{\g} a^{2i}$
starting at $\ow$ ends at $\ou$.
Since $w \delta(2) =_{\g} w(r-2)$ and 
$w \delta(4) =_{\g} u(r-2)$, this path also remains
within $B(r)$.

\bcase{\it 8: If $w$ is in class $(1)$ and $u$ is in class $(3)$:}  
Then $w \in X$
and $u \in P(X)$. In this case, $\ts(w)=0$, $\ts(u)>0$, and
$\ts(u)=\ts(w)+\ts(\gamma)=\ts(\gamma)$,
so $0<\ts(u)=\ts(\gamma) \le 2$.
Thus $\gamma \in P$, so
$\gamma \in \{t, ta^{\pm 1}, t^2, a^{\pm 1}t \}$.

Suppose $\gamma \in \{t, ta^{\pm 1}, t^2\}$. 
By Proposition \ref{propn:normalform} and Lemma \ref{lem:rlarge}, 
the normal form $w$ can be chosen in the form
$w=w_1t^{-h}$ with
$w_1 \in P$ and $h>10$. 
Then the length $r$ geodesic $u$ cannot represent $wt=_Gw(r-1)$ or
$wt^2=_g w(r-2)$, so 
%Hence 
$\gamma \ne t$ and $\gamma \ne t^2$. For $\gamma=ta^{\pm 1}$,
since $w\gamma(1)=_G w(r-1)$, 
we may define $\delta:=\gamma$.

Suppose for the rest of Case 8 that $\gamma =a^{\pm1}t$ and
write $\gamma=a^mt$ with $|m|=1$.
Proposition \ref{propn:normalform} says that
the normal form $w$ can also be chosen in the form
$w=tw_0t^{-1}a^i$ with $w_0 \in X$ and $0 \le |i| \le 1$.
If $i=m$,  then 
$ u =_{\g} w\gamma=tw_0t^{-1}a^m a^mt =_G w(r-2) a^m$, 
and if $i=-m$, then 
$u =_{\g} tw_0t^{-1}a^{-m} a^{m}t =w(r-2)$, both
contradicting the geodesic length $r$ of $u$.
Then $i=0$ and $w=tw_0t^{-1}$ with $w_0$ in $X$.
%If $|i|=1$, then $\gamma=a^{\pm i}t$.  If
%$\gamma=a^it$, then 
%$ u =_{\g} w\gamma=tw_0t^{-1}a^i a^it =_G w(r-2) a^i$, 
%and if $\gamma=a^{-i}t$, then 
%$u =_{\g} tw_0t^{-1}a^i a^{-i}t =w(r-2)$, both
%contradicting the geodesic length $r$ of $u$.
%Then $i=0$ and $w=tw_0t^{-1}$ with $w_0$ in $X$.
We also have $\ts(u)=\ts(\gamma)=1$, and
Lemma \ref{lem:rlarge}
implies that $u \in PX$,
so the normal form $u=a^jtu_0$ with
$u_0$ in $X$ and $|j|\leq 1$.   

If $j=0$ then $w$ and $u$ have a common $t$ prefix, 
and the path $\dd:=tw_0^{-1}u_0$ has the required properties.

Suppose for the remainder of Case 8 that $|j|=1$. 
Then either $\gamma=a^{j}t$ or
$\gamma=a^{-j}t$; 
we consider these two subcases separately.

\bcase{\it 8.1: If $\gamma=a^jt$:} Applying Lemma
\ref{lem:commute} to commute the subwords in parentheses with zero
$t$-exponent-sum,
$$1=_{\g}w\gamma
u^{-1}=tw_0t^{-1}(a^j)(tu_0^{-1}t^{-1})a^{-j}
%=_{\g}tw_0t^{-1}tu_0^{-1}t^{-1}a^ja^{-j} 
=_{\g}tw_0u_0^{-1}t^{-1},$$
which yields $w_0=_{\g}u_0$.  
By Proposition \ref{propn:normalform} we can replace
each subword with 
a normal form $w_0=u_0=vta^kt^{-p}$ such that $2\leq |k|\leq 3$
and $v \in P$ with $\ts(v)=p-1$.
Since $r>200$, Lemma \ref{lem:rlarge} implies
that $p>10$.  
Let $s:=\mathrm{sign}(k)$.
Then $w=tvta^{|k|s}t^{-(p+1)}$ and $u=a^jtvta^{|k|s}t^{-p}$.

Consider the path $\delta:=t^pa^{-2s}t^{-p}a^jt^pa^{2s}t^{1-p}$
starting at $\ow$.  
Using Lemma \ref{lem:commute},
$$
\delta=(t^pa^{-2s}t^{-p})(a^j)t^pa^{2s}t^{-(p-1)}=_{\g}
(a^j)(t^pa^{-2s}t^{-p})t^pa^{2s}t^{-(p-1)}=_{\g}a^jt=\gamma,
$$
so $\delta$ labels a path from $\ow$ to $\ou$.
This path $\delta$ both
follows and ``fellow travels'' suffixes of $w$ and $u$;
see Figure \ref{fig:case8-3} for a view of this path, shown
in shading,
when $k$ and $j$ have the same sign.
\begin{figure}
\bt{ccc}
\includegraphics[width=5cm]{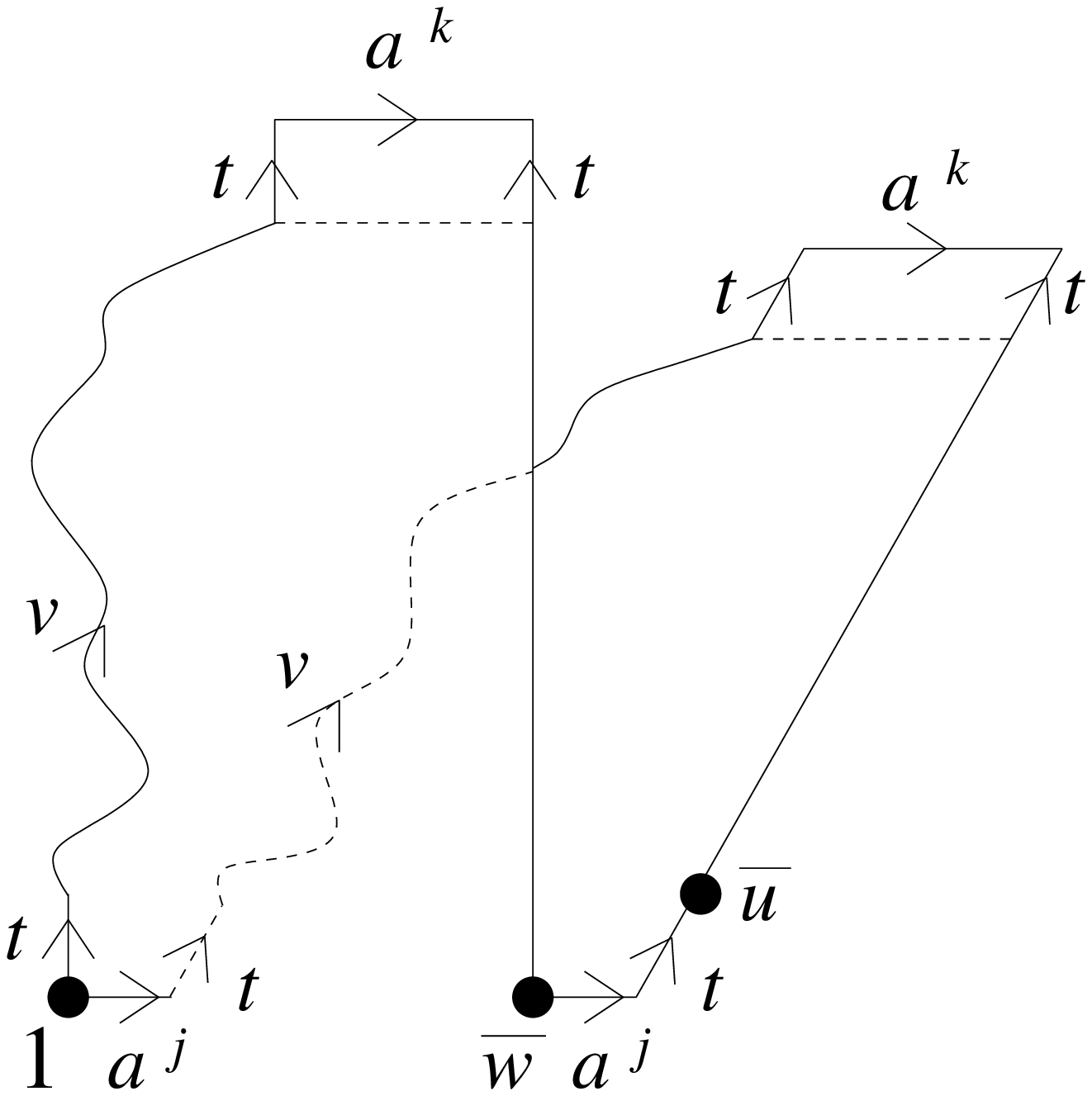} 
& \hspace{5mm} &
\includegraphics[width=5cm]{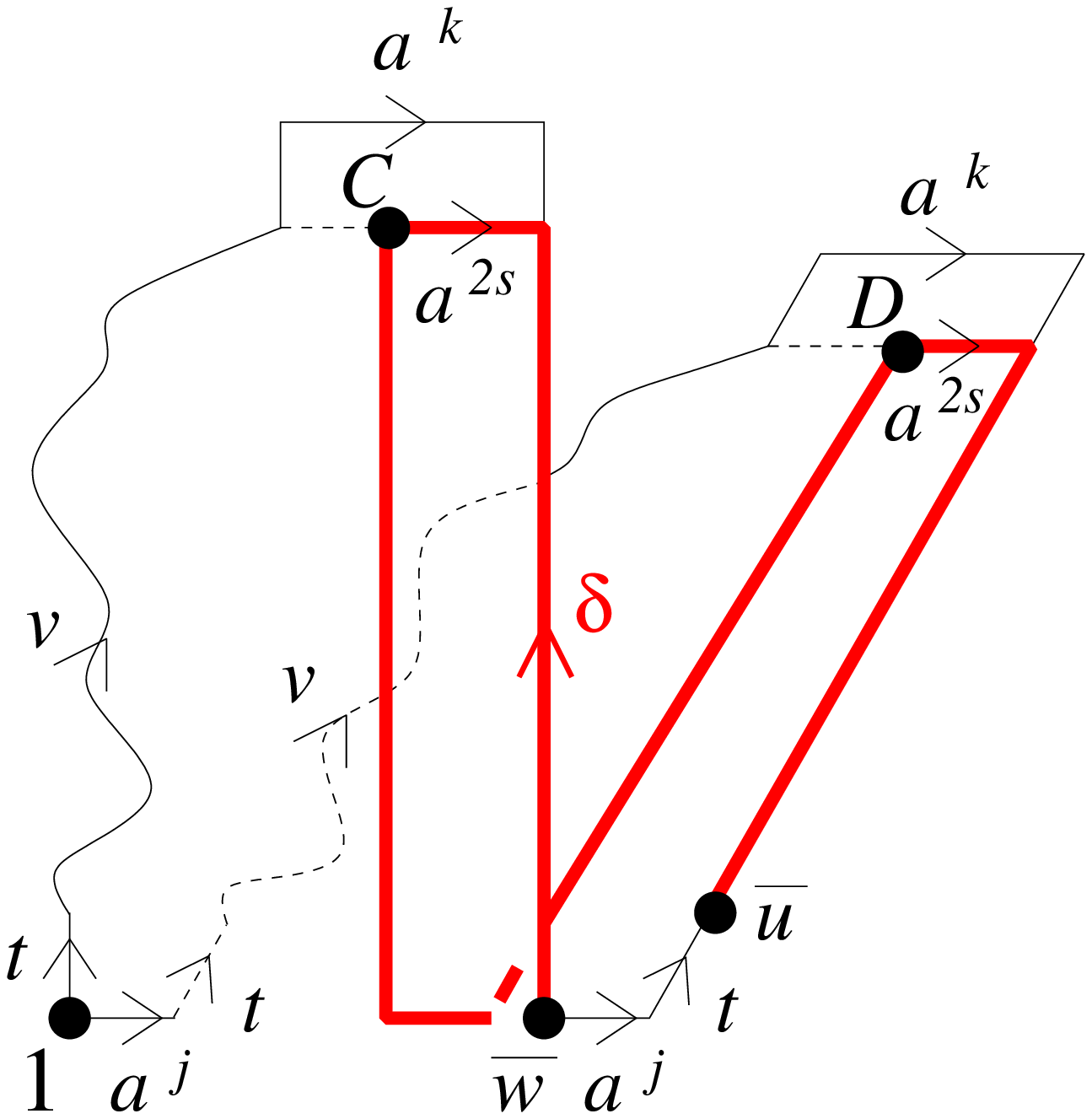} \et
\caption{Case 8.1: $w=tvta^kt^{-p-1},u=a^jtvta^kt^{-p},\gamma=a^jt$.}
\label{fig:case8-3}
\end{figure} 

In order to check that $\dd$ remains in the ball $B(r)$,
we analyze the distances from 1 of several vertices
along the path $\delta$, and together with
the lengths of the subpaths between the vertices.
The prefix $t^p$ of
$\delta$ is the inverse of a suffix of $w$, 
so starting from
$\ow$ the path $\delta$ follows the path $w$
backward.  Then $d(1,\ov{w\delta(i)})=r-i$
for $0 \le i \le p$ and 
$w {\delta(p)}=_{\g} {w(r-p)}$.
The point 
$\ov{w\delta(p+1)}$ must then also lie
in the ball $B(r-(p-1))$.
Since $w {\delta(p+2)} =
w {t^pa^{-2s}} =_{\g} {w(r-p)a^{-2s}}=_{\g}
{w(r-p)ta^{-s}t^{-1}} =_{\g} {w(r-(p+2))t^{-1}}$,
the point $C:=\ov{w\delta(p+2)}$ must lie in
the ball $B(r-(p+1))$.  
Then the initial segment of $\delta$ of length
$p+2$ from $\ow$ to $C$ lies inside $B(r)$.

Similarly, the suffix $t^{-(p-1)}$ of $\delta$
%given by =\delta^{-1}(p-1)=u^{-1}(p-1)$ 
is also a suffix of $u$, so 
$d(1, \ow \ov{\delta(3p+5+i)})=r-(p-1)+i$
for $0 \le i \le p-1$ and
$w {\delta(3p+5)}=_{\g} {u(r-(p-1))}$.
The point 
$\ov{w\delta(3p+4)} \in B(r-(p-2))$.
Since $w {\delta(3p+3)} =_{\g} w{\delta(3p+5)}a^{-2s}
=_{\g} u(r-(p-1))ta^{-s}t^{-1}
=_{\g} a^jtvta^{|k|s}t^{-1}ta^{-s}t^{-1}=_{\g} u(r-(p+1))t^{-1}$,
the point $D:= \ov{w\delta(3p+3)}$ 
must lie in the ball $B(r-p)$.  So the final segment of
$\delta$ of length $p+1$ from $D$ to $\ou$ also
lies in $B(r)$.

Finally, the central section labeled $t^{-p}a^jt^p$
of the path $\delta$
from $C \in B(r-(p+1))$ to $D \in B(r-p)$ has length $2p+1$, 
and hence never leaves the ball $B(r)$.
The entire path $\delta$
has length $4p+4$, whereas $r=l(v)+|k|+p+3\geq
(p-1)+2+p+3=2p+4$, so $l(\delta)\leq 2r-4$.  Thus the path
$\delta$ has the required properties in this subcase.
%Observe for future
%reference that $\delta $ starts with a $t$.

\bcase{\it 8.2: If $\gamma=a^{-j}t$:} Applying Lemma \ref{lem:commute}
again yields $1=_{\g} w\gamma
u^{-1}=tw_0t^{-1}(a^{-j})(tu_0^{-1}t^{-1})a^{-j}
=_{\g}tw_0t^{-1}tu_0^{-1}t^{-1}a^{-j}a^{-j}
=_{\g}tw_0u_0^{-1}a^{-j}t^{-1}$, implying that $u_0=_{\g}a^{-j}w_0$.  
Plugging this into the expression for $u$ gives
$u=a^jtu_0 =_{\g} a^jta^{-j}w_0 =_{\g} a^{-j}tw_0$.
Note that $r=l(w)=l(w_0)+2$, so $a^{-j}tw_0$ is another
geodesic from $1$ to $\ou$.  Replacing $u$ with
$a^{-j}tw_0$, we can now find the path $\delta$ using
subcase 8.1.

\bcase{\it 9: If $w$ is in class $(2)$ and $u$ is in class $(4)$:}  
Then $w \in (X)N$ and $u$
is either in $NP$,
$XNP$, or $NPX$.
Since $w \not\in (X)NP \cup NPX$, Lemma \ref{lem:twonp} says that
$\ts(w)<\ts(u)$.  
Therefore $\ts(\gamma)>0$, so 
$\gamma \in \{t,t^2,a^{\pm 1}t,ta^{\pm 1}\}$.  
We divide
this case into two subcases, depending on the $t$-exponent sum of $u$.

\bcase{\it 9.1: If $\ts(u)\leq 0$:} By Proposition
\ref{propn:normalform}, the geodesic normal form
$u=u_0u_1t^{-1}a^{m_e}t^f$ with
$u_0 \in X \cup E$, $u_1 \in N$, $|m_e| = 1$, 
and $1 \le f \le e=|\ts(u_1)|+1$.  

\bcase{\it 9.1.1: If $\gamma \in \{t, t^2, a^{\pm 1}t \}$:} 
Then $\gamma$ and $u$ share a suffix $t$, so
$u(r-1)=u\gamma^{-1}(1)$.  
The geodesic $w \ne u(r-1)$, so $\gamma \ne t$.
For $\gamma \in \{t^2,a^{\pm 1}t\}$, 
the path
$\delta:=\gamma$ has the required properties.

\bcase{\it 9.1.2: If $\gamma \in \{ ta^{\pm 1}\}$:} 
Write $\gamma=ta^k$ with $|k|=1$.  Also write
$w=w_0w_1$ with $w_0 \in X \cup E$ and $w_1 \in N$.
Then  
%\begin{eqnarray*}
%1&=_G&\tilde u \gamma^{-1} \tilde w^{-1}=
%\tilde u_0 u_1t^{-1}a^{m_e}t^fa^{-k}t^{-1}w_1^{-1}\tilde w_0^{-1}\\
%&=_G&\tilde u_0 u_1t^{-1}a^{m_e}t^{f-1}a^{-2k}w_1^{-1}\tilde w_0^{-1} 
%  \in NP.
%\end{eqnarray*}

\hspace{0.3in}
$1=_G\tilde u \gamma^{-1} \tilde w^{-1}=
\tilde u_0 u_1t^{-1}a^{m_e}t^fa^{-k}t^{-1}w_1^{-1}\tilde w_0^{-1}$

\hspace{0.5in} $=_{\g}
\tilde u_0 u_1t^{-1}a^{m_e}t^{f-1}a^{-2k}w_1^{-1}\tilde w_0^{-1}\in NP$.

\noindent
%Since the latter word is in $NP$ and 
%$1=_G \tilde u \gamma^{-1} \tilde w^{-1}$,
Lemma \ref{lem:britton} implies that the latter
word must contain a non-geodesic $t^{-1}a^{2s}t$
subword. Then the first occurrence of a $t$ must be in $w_1^{-1}$, so
$f=1$.  Write
$u=u_0u_1t^{-1}a^{m_e}t=u_0u_1't$ with $u_1':=u_1t^{-1}a^{m_e} \in N$.
Note that $w =_G u\gamma^{-1} =_G u_0u_1'ta^{-k}t^{-1}
=_G u(r-1)a^{-2k}$.   
Let $v:=u_0u_1'a^{-k}=u(r-1)a^{-k}=_G wa^k$.  
The  vertex $\ov{v} \in B(r)$. 
If $\ov{v} \in B(r-1)$,
then the path $\delta:=a^{2k}t$ from $\ow$
to $\ou$ satisfies $w\delta(1) =_{\g} v$ and
$w\delta(2) =_{\g} u(r-1)$, so $\delta$ is a path of
length 3 inside $B(r)$ from $\ow$
to $\ou$.
On the other hand, if $\ov{v} \not\in B(r-1)$,
then $v$ is a length $r$ geodesic 
in $(X)N$, and $w=_{\g} va^{-k}$ so $d(\ov{v},\ou)=1$.  Applying case 7.3
to the geodesics $v$ and $w$ in class $(2)$,
there is a path $\delta'$ of length 4 inside $B(r)$ from
$\ow$ to $\ov{v}$.
Let $\delta:=\delta'a^kt$.  Then $\delta$
is a path of length 6 from $\ow$ to $\ou$ 
inside $B(r)$.

\bcase{\it 9.2: If $\ts(u)> 0$:} Since $\ts(w)<0$ and
$\ts(w)+\ts(\gamma)=\ts(u)$ then we must have 
$\ts(w)=-1$, $\ts(u)=1$, and $\gamma = t^{2}$.  
By Proposition \ref{propn:normalform} and Lemma \ref{lem:rlarge}, 
$w=w_0t^{-1}a^i$
with $w_0 \in X$, $|i| \le 1$,  
$w_0=vta^kt^{-p-1}$ 
with $v \in P$, $\ts(v)=p>9$, and $2 \le |k| \le 3$.
Since $u =_G w\gamma=vta^kt^{-p}t^{-2}a^it^2$ and $u$ has geodesic length 
$r=l(w)=l(v)+|k|+|i|+p+3$, then $i \ne 0$.
Using Lemma \ref{lem:commute},
$u =_G (t^{-1}a^it)(vta^kt^{-p-1})t$.
The word $x:=t^{-1}a^itvta^kt^{-p}$ is 
another geodesic labeling a path from the identity to 
$\ou$.

Let $s:=$sign$(k)$, and define the path
$$
\delta:=
a^{-i}t^{p+1}a^{-2s}t^{-(p-1)}t^{-2}a^it^2t^{p-1}a^{2s}t^{-(p-1)}
$$
starting at $\ow$.
Using Lemma \ref{lem:commute},

$\delta=_{\g}
a^{-i}t^{p+1}a^{-2s}t^{-(p-1)}(t^{p-1}a^{2s}t^{-(p-1))(t^{-2}a^it^2)}
=_{\g}t^{2}=\gamma,$

\noindent 
so $\delta$ labels a path 
from $\ow$ to $\ou=\ov{x}$.
The length $l(\delta)=4p+8$, and
the length $r=l(w)=l(v)+|k|+p+4 \ge 2p+6$,
so $l(\delta) \le 2r-4$.
  
The proof that $\dd$ remains in $B(r)$ is
similar to case 8.1.
In particular, note that
$\ov{w\dd(p+2)}=\ov{w(r-(p+2))} \in B(r-(p+2))$.
Since $w\delta(p+4)=
(vta^kt^{-p}t^{-2}a^i)(a^{-i}t^{p+1}a^{-2s})=_G
%vta^kt^{-1}a^{-2s}=_{\g}
vta^{k-s}t^{-1}=_{\g}
w(r-(p+4))t^{-1}$,
the point $\ov{w\delta(p+4)} \in B(r-(p+3))$.
%The words $\dd$ and $u$ share the suffix 
Also 
$\ov{w\delta(3p+9)} = \ov{x(r-(p-1))} \in B(r-(p-1))$.  Finally,
since $w\delta(3p+7) =_{\g} x(r-(p-1))a^{-2s}=_{\g}
t^{-1}a^itvta^kt^{-1}a^{-2s}=_G t^{-1}a^itvta^{k-s}t^{-1}
=_G x(r-(p+1))t^{-1}$,
then the point $\ov{w\delta(3p+7)} \in B(r-p)$.
Then the five successive
intermediate subpaths of $\dd$ between $\ow$, these four points, 
and $\ou$
are too short to allow $\dd$ to leave $B(r)$.

\bcase{\it 10: If both $w$ and $u$ are in class $(4)$:}  
In this case both $w$ and $u$ are in $(X)NP \cup NPX$.
We may assume without loss of
generality that $\ts(w) \leq \ts(u)$. It follows that
$\ts(\gamma) \geq 0$, so $\gamma \in \{a^{\pm 1},a^{\pm 2}, a^{\pm
1}t, ta^{\pm 1}, t, t^2\}$.

We divide this case into three subcases, depending on the $t$-exponents
of $w$ and $u$.

\bcase{\it 10.1: If $\ts(w) \geq 0$ and $\ts(u) \geq 0$:}
In this case Proposition \ref{propn:normalform} 
says that we have geodesic normal forms
$u,w \in NP(X)$, and moreover $w=t^{-p_1}w'$ and
$u=t^{-p_2}u'$ with $p_1>0$, $p_2>0$, and $w', u' \in P(X)$. 
Thus $w(1)=t^{-1}=u(1)$, and we may define
$\dd:=w'^{-1}t^{p_1-1}t^{-(p_2-1)}u'$.

\bcase{\it 10.2: If $\ts(w) <0$ and $\ts(u) \leq 0$:}  
In this subcase, we have normal forms $w,u \in (X)NP$,
and we can write 
%\begin{eqnarray*}
%w&=&w_0w_1t^{-1}a^{i_1}t^{f_1}\\
%u&=&u_0u_1t^{-1}a^{i_2}t^{f_2}
%\end{eqnarray*}
$$
w=w_0w_1t^{-1}a^{i_1}t^{f_1} 
\hspace{0.4in} {\rm and} \hspace{0.4in}
u=u_0u_1t^{-1}a^{i_2}t^{f_2}
$$
with $w_0,u_0 \in X \cup E$,
$w_1 \in N$,
$u_1 \in N \cup E$,
$f_1 \ge 1$, $f_2 \ge 1$,
$\ts(w_1)\leq -f_1$,
$\ts(u_1)\leq -(f_2-1)$, and 
$|i_1|=|i_2|=1$.
Lemma \ref{lem:twonp} implies that $\ts(w_1)=\ts(u_1)$.
Then $\ts(w\gamma)=\ts(w_1)-1+f_1+\ts(\gamma)=\ts(u)=\ts(u_1)-1+f_2$,
so $f_2=f_1+\ts(\gamma)\ge f_1$.

\bcase{\it 10.2.1: If $\gamma \in \{t, t^2, a^{\pm 1}t\}$:} 
Since the last letter of $u$ is $t$, the proof of
Case 9.1.1 shows that $\gamma \ne t$, and for
$\gamma \in \{t^2, a^{\pm 1}t\}$, we may define 
 $\delta:= \gamma$.

\bcase{\it 10.2.2: If $\gamma \in \{a^{\pm 1}\}$:} 
Write $\gamma=a^{k}$ with $|k|$=1.
The word $\delta:=t^{-1}a^{2k}t$ labels
a path of length 4 from $\ow$ to $\ou$.
Since both words $w$ and $u$ end with a $t$, then
$w \delta(1)=_{\g} w(r_1)$ and $w \delta(3)=_{\g} u(r-1)$, hence
$\delta$ lies in $B(r)$.

\bcase{\it 10.2.3: If $\gamma \in \{ta^{\pm 1}\}$:}
Write $\gamma=ta^k$ with $|k|=1$.
In this subcase, $f_2=f_1+\ts(\gamma)=f_1+1 \ge 2$.
Then the word $w$
ends with a $t$ and $u$ ends with $t^2$.
Now $u=u(r-1)t =_G w\gamma=wta^k=_G wa^{2k}t$.
Let $v:=u(r-1)a^{-k}=_G wa^k$.  Then $v \in (X)NP$ and $\ov{v} \in B(r)$.
The remainder of the proof in this subcase is similar to Case 9.1.2.
If $v \in B(r-1)$, then $\delta:=a^{2k}t$ has the required properties.
If $v \not\in B(r-1)$, then Case 10.2.2 provides
a path $\delta'=t^{-1}a^{2k}t$ inside $B(r)$ from $\ow$ to $\ov{v}$, and
the path $\delta:=\delta'a^kt=t^{-1}a^{2k}ta^kt$ 
from $\ow$ to $\ou$ satisfies the
required conditions.

\bcase{\it 10.2.4: If $\gamma \in \{a^{\pm 2}\}$:}
Write $\gamma=a^{2k}$ with $|k|=1$.
In this case we have $\ts(u)=\ts(w)<0$ and
$f_2=f_1+\ts(\gamma)=f_1$.

The radius $r=l(w)=l(w_0)+l(w_1)+2+f_1 \ge \ts(w_1)+2+f_1 \ge 2f_1+2$.
If $r=2f_1+2$, then $w=t^{-f_1-1}a^{i_1}t^{f_1}$ and 
$u=t^{-f_1-1}a^{i_2}t^{f_1}$.  Since $\ow \ne \ou$,
then $i_1 \ne i_2$, so $i_2=-i_1$.  Since $r>200$, then $f_1>1$.
Now
$u^{-1}w\gamma=_G
(t^{-f_1-1}a^{i_2}t^{f_1})^{-1}(t^{-f_1-1}a^{i_1}t^{f_1})a^{2k}=_{\g}
%t^{-q_1}a^{-i_2}a^{i_1}t^{q_1})a^{2k}=_{\g}
t^{-(f_1-1)}a^{i_1}t^{f_1-1})a^{2k}\ ;$
according to Britton's Lemma, this last expression cannot equal
the trivial element 1 in $G$.  
Thus the radius $r \ne 2f_1+2$,
so $r \ge 2f_1+3$.

Define $\delta:=t^{-f_1}(a^{-i_1})(t^{f_1}a^{2k}t^{-f_1})a^{i_1}t^{f_1}$.
Using Lemma \ref{lem:commute} to commute the subwords in 
parentheses, and freely reducing the resulting word, shows that
$\delta =_{\g}  a^{2k}=\gamma$, so $\delta$ labels a path from $\ow$
to $\ou$.  
The length $l(\gamma)=4f_1+4=2(2f_1+3)-2\le 2r-2$.
The prefix $\delta(f_1+2)=w^{-1}(f_1+2)$ is the inverse of
a suffix of $w$, so 
$\ov{w\dd(f_1+2)}=\ov{w(r-(f_1+2))} \in B(r-(f_1+2))$. 
The word $t^{f_i}$ is a suffix of both $\dd$ and $u$,
so $\ov{w\dd(3f_1+4)}=\ov{u(r-f_1)} \in B(r-f_1)$.
The three subpaths of $\dd$ between $\ow$, the two points above,
and $\ou$ are again too short to allow $\dd$ to leave $B(r)$.

\bcase{\it 10.3: If $\ts(w) <0$ and $\ts(u) > 0$:}  
Then $\gamma=t^2$, $\ts(w) =-1$, and $\ts(u) =1$.  
From Proposition \ref{propn:normalform}, the normal
form $$w=w_0t^{-1}a^{m_1}t^{-1} \cdots t^{-1}a^{m_p}t^{p-1}$$
with either $w_0 \in X$ or $w_0=a^k \in E$ for some $|k| \le 3$,
$p \ge 2$, and 
%each $m_j| \le 1$, 
$|m_p|=1$.  Using Lemma \ref{lem:commute}, then the word
$\check w := w_0t^{-p}a^{m_p}ta^{m_{p-1}} \cdots ta^{m_1}$ is
another geodesic representative of $\ow$.
The normal form for $u \in NP(X)$ has the form
$u=t^{-e}a^jtu_1u_0$ with $e \ge 1$, $|j|=1$, $u_1 \in P$
with $\ts(u_1)=p$, and $u_0 \in X \cup E$.
Lemma \ref{lem:twonp} shows that
$p=e$.  Replacing $w$ by the alternate normal form $\check w$,
then we can write 
$$
w=w_0t^{-p}a^{i}tw_2
\hspace{0.4in} {\rm and} \hspace{0.4in}
u=t^{-p}a^jtu_1u_0
$$
such that
either $w_0 \in X$ or $w_0=a^k$ for $|k| \le 3$,
$p \ge 2$, 
$|i|=1$,
$w_2 \in P \cup E$ with $\ts(w_2)=p-2$,
$|j|=1$, $u_1 \in P$
with $\ts(u_1)=p$, and $u_0 \in X \cup E$.

We will divide case 10.3 into further subcases, depending
on the form of $w_0$ and the length of $w_2$.

\bcase{\it 10.3.1: If $w_0 \in X$:}
In this case Proposition \ref{propn:normalform}
says that we can write $w_0=w_3ta^kt^{-l}$ with
$l \ge 1$, $w_3 \in P \cup E$, $\ts(w_3)=l-1$, and $2 \le |m| \le 3$.
Let $s = \pm 1$ be the sign of $m$, so that
$m=|m|s$.
Then 
$$
w=w_3ta^{|m|s}t^{-l}t^{-p}a^itw_2\ .
$$
The radius 
$r=l(w_3)+l(w_2)+|m|+l+p+3 \ge \ts(w_3)+l(w_2)+l+p+5=
l(w_2)+2l+p+4$.

Applying Lemma \ref{lem:commute}, we obtain
$u=_{\g} \check w t^2=  (w_3ta^{|m|s}t^{-l})(t^{-p}a^itw_2t)t
=_{\g} t^{-p}a^itw_2tw_3ta^{|m|s}t^{-(l-1)}.$  
Then 
$$
\check u:= t^{-p}a^itw_2tw_3ta^{|m|s}t^{-(l-1)}\ .
$$  
is another geodesic representative of $\ou$.

\bcase{\it 10.3.1.1: If $l \ge 2$:}  
Define 
$$
\delta:=(w_2^{-1}t^{-1}a^{-i}t^{p-1})(t^{l}a^{-2m}t^{-l})
 t^{-(p-1)}a^itw_2t^{l}a^{2m}t^{-(l-2)}\ .
$$
Applying Lemma \ref{lem:commute} to the subwords in parentheses
shows that $\delta =_G t^2=\gamma$, so $\delta$ labels a path 
from $\ow$ to
$\ov{\check u}=\ou$.
The length $l(\delta)=2l(w_2)+4l+2p+4 \le 2r-4$.

The vertex 
$\ov{w\delta(l(w_2)+p+l+1)}=\ov{w(r-(l(w_2)+p+l+1))} 
\in B(r-(l(w_2)+p+l+1))$.
Now $w\delta(l(w_2)+p+l+3)=_{\g}
%w(r-(l(w_2)+p+l+1))a^{-2s}=_G
w_3ta^{|m|s}t^{-1}a^{-2s}=_{\g} w_3ta^{(|m|-1)s}t^{-1}
w(r-(l(w_2)+p+l+3))t^{-1}$, implying
 $\ov{w\delta(l(w_2)+p+l+3))} \in B(r-(l(w_2)+p+l+2))$.
The suffix $t^{-(l-2)}$ of $\delta$ is also a
suffix of $\check u$, so 
$\ov{w\delta(2l(w_2)+3l+2p+6)} = \ov{\check u(r-(l-2))} \in B(r-(l-2))$.
Finally,
$w\delta(2l(w_2)+3l+2p+4)=_G
\check u(r-(l-2))a^{-2s}=_{\g} 
t^{-p}a^itw_2tw_3ta^{|m|s}t^{-1}a^{-2s}=_{\g} 
t^{-p}a^itw_2tw_3ta^{(|m|-1)s}t^{-1}$, so
$\ov{w\delta(2l(w_2)+3l+2p+4)}=\ov{\check u(r-l)t^{-1}} \in B(r-(l-1))$.
The five subpaths of $\dd$ between $\ow$, these
four points, and $\ou$ are each too short to leave $B(r)$.

\bcase{\it 10.3.1.2: If $l=1$:}  
In this case define 
$$
\delta:=(w_2^{-1}t^{-1}a^{-i}t^{p-1})(ta^{-2s}
t^{-1})t^{-(p-1)}a^itw_2t^2a^{s}\ .
$$
Commuting the subwords in parentheses, then $\dd=_Gt^2=\gamma$ and
$\delta$ labels a path from $\ow$ to $\ou$.
The length $l(\delta)=2l(w_2)+2p+9=2l(w_2)+4l+2p+5 \le 2r-3$.

The proof that $\dd$ remains in $B(r)$ is similar to
Case 10.3.1.1.  In particular,
$\ov{w\delta(l(w_2)+p+2)} =\ov{w(r-(l(w_2)+p+2))} \in B(r-(r-(l(w_2)+p+2))$,
$\ov{w\delta(l(w_2)+p+4)} =\ov{w(r-(l(w_2)+p+4))t^{-1}} 
 \in B(r-(r-(l(w_2)+p+3))$, 
$\ov{w\delta(2l(w_2)+2p+8)} = \ov{\check u(r-1)} \in B(r-1)$, and
the four successive subpaths between
$\ow$, these three points, and $\ou$ are too short for $\dd$
to leave $B(r)$.

\bcase{\it 10.3.2: If $w_0=a^k$ with $|k| \le 3$, and $l(w_2) = p-2$:}
Since $\ts(w_2) =p-2$, then  $w_2=t^{p-2}$ and
$w=a^kt^{-p}a^it^{p-1}$ with $p \ge 2$ and $|i|=1$.
Recall that $u=t^{-p}a^jtu_1u_0$ with $|j|=1$ and $\ts(u_1)=p$.
The radius $r=|k|+2p=p+2+l(u_1u_0)$, so
$|k| =  l(u_1u_0) -p+2 \ge 2$.

If $|k|=2$, then $l(u_1u_0)=p$ and
$u=t^{-p}a^jt^{p+1}$.  The trivial element
$1=_{\g} wt^2u^{-1}=a^kt^{-p}a^it^{p-1}t^2t^{-(p+1)}a^{-j}t^p
=_{\g} a^kt^{-p}a^ia^{-j}t^p$.  Since $a^k \ne_{\g} 1$, $i \ne j$.
But $a^kt^{-p}a^ia^{i}t^p=_{\g} a^kt^{-(p-1)}a^it^{(p-1)}$, and
Britton's Lemma says that the latter word cannot represent
the trivial element 1, so $i \ne -j$.  Therefore
we cannot have $|k|=2$.

If $|k|=3$, then $l(u_1u_0)=p+1$ and $\ts(u_1)=p$,
so the word $u_1u_0$ contains one occurrence of $a$ or $a^{-1}$.
Write $u_1u_0=t^ba^lt^{p-b}$ for some $0 \le b \le p$ and $|l|=1$.
Then $wt^2u^{-1}$ freely reduces to 
$a^kt^{-p}a^it(t^{b}a^{-l}t^{-b})(a^{-j})t^p$.  Commuting
the parenthetical subwords and reducing again gives
$1=_G wt^2u^{-1} =_{\g} a^kt^{-p}a^ita^{-j}t^{b}a^{-l}t^{p-b}$.  
This word is in $NP$, and doesn't contain a subword
of the form $t^{-1}a^{2m}t$, so Britton's Lemma
(or Lemma \ref{lem:britton}) implies a contradiction again.
Therefore case 10.3.2 cannot occur.

\bcase{\it 10.3.3: If $w_0=a^k$ with $|k| \le 3$ and $l(w_2) \ge p-1$:}
In this case we have the geodesic normal forms
$w=a^kt^{-p}a^itw_2$ and $u=t^{-p}a^jtu_1u_0$.
The radius $r=l(w_2)+|k|+p+2=l(u_1u_0)+p+2$.
We have two subcases, depending on whether $i=j$ or $i \ne j$.

\bcase{\it 10.3.3.1: If $i=j$:}
%Here $\check w=a^kt^{-p}a^itw_2$ and $u=t^{-p}a^itu_1u_0$
In this subcase note that
$\gamma=t^2 =_{\g} w^{-1}u=
w_2^{-1}t^{-1}(a^{-j})(t^pa^{-k}t^{-p})a^jtu_1u_0$.
Applying Lemma \ref{lem:commute} 
%to the parenthetical subwords 
and reducing shows that
the word $\delta:=w_2^{-1}t^{p-1}a^{-k}t^{-(p-1)}u_1u_0 =_G \gamma$,
and so $\dd$ 
labels a path from $\ow$ to $\ou$.
The length $l(\delta)=2r-6$.

As usual we analyze the distances from 1 of several vertices
along the path $\delta$.
$\ov{w\delta(l(w_2))}=\ov{w(r-l(w_2))} \in B(r-l(w_2))$,
$\ov{w\delta(l(w_2)+2p-2+|k|)} = \ov{u(r-l(u_1u_0))} \in B(r-l(u_1u_0))$,
and the three intervening subpaths are each too short to
allow $\dd$ to leave $B(r)$.

\bcase{\it 10.3.3.2: If $i=-j$:}
%Here
%$\check w=w_0t^{-p}a^{-j}tw_2$ and $u=t^{-p}a^{j}tu_1u_0$.
Similar to Case 10.3.3.1, after commuting and reduction we have
$\gamma =_{\g} w^{-1}u
 =_{\g} w_2^{-1}t^{p-1}a^{-k}t^{-p}a^{j}a^{j}tu_1u_0$.
Then the
word $\delta:=w_2^{-1}t^{p-1}w_0^{-1}t^{-(p-1)}a^{j}u_1u_0=_G \gamma$
labels a path from $\ow$ to $\ou$ and
has length $l(\delta)=2r-5$.

As in Case 10.3.3.1, we have
$\ov{w\delta(l(w_2))}=\ov{w(r-l(w_2))} \in B(r-l(w_2))$ and
$\ov{w\delta(l(w_2)+2p-1+|k|)}= \ov{u(r-l(u_1u_0))} \in B(r-l(u_1u_0))$.
Now $w\delta(l(w_2)+2p-2+|k|)=_{\g}
%\ov{u(r-l(u_1u_0))}a^{-j}=_G
(t^{-p}a^{j}t)a^{-j}=_{\g}t^{-p}a^{-j}t$, so
$\ov{w\delta(l(w_2)+2p-2+|k|)} =\ov{u(r-(l(u_1u_0)+2))a^{-j}t}
\in B(r-l(u_1u_0))$ as well.
The four successive subpaths of $\dd$ between $\ow$,
these three vertices, and $\ou$ have lengths too short
to allow $\dd$ to leave $B(r)$. Therefore
$\delta$ has the required properties.

\vspace{3mm}

Therefore in all of cases 1-10, either 
the case cannot occur or
the path
$\delta$ with the required properties can be
constructed, completing the proof of Theorem \ref{thm:BS12mac}.
\end{proof}

%%%%%%%%%%%%%%%%%%%%%%%%%%%%%%%%%%%%%%%%%%%%%%%%%%%%%%%%%%%%%%%%%%%%

%%%%%%%%%%%%%%%%%%%%%%%%%%%%%%%%%%%%%%%%%%%%%%%%%%%%%
\section{Non-convexity properties for $BS(1,q)$}
%%%%%%%%%%%%%%%%%%%%%%%%%%%%%%%%%%%%%%%%%%%%%%%%%%%%%

In the first half of this section we show, in Theorem
\ref{thm:notpac}, that the group
$G:=BS(1,2)=\langle a,t ~|~ tat^{-1}=a^2\rangle$ 
with generators 
$A:=\{a,a^{-1},t,t^{-1}\}$ does not satisfy Poenaru's
$P(2)$ almost convexity condition.  
We start
by defining some notation.
Let $n$ be an arbitrary natural number with $n>100$
and let $w:=t^{n}a^2t^{-n}$ and
$u:=at^{n}a^2t^{-(n-1)}$ (see Figure  \ref{fig:notPAC}).
 Then $w$ and $u$ are words
of length $R:=2n+2$.  Moreover, using Lemma \ref{lem:commute},
$wat=(a)(t^{n}a^2t^{-n})t=_Gu$, so
$d(\ow,\ou)=2$ and the word $\gamma:=at$ labels a
path from $\ow$ to $\ou$. 

 \begin{figure}
\bt{c}\includegraphics[height=4.2cm]{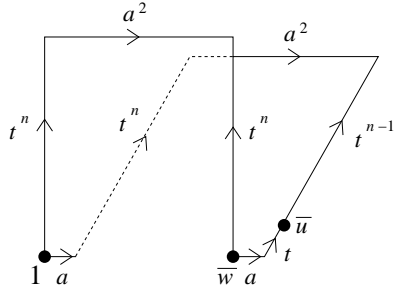}
\et
\caption{$w=t^{n}a^2t^{-n},
u=at^{n}a^2t^{-(n-1)}$}
\label{fig:notPAC}
\end{figure}

\begin{lem}\label{lem:boundingm}
If $m \in \Z$ and $\overline{a^m}$ is in the ball $B(R)=B(2n+2)$
in the Cayley graph of $G$, then either $m=2^{n+1}$ or 
$m \le 2^n+2^{n-1}+2^{n-2}$.
\end{lem}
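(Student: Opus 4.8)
The plan is to understand precisely which group elements $\overline{a^m}$ lie within distance $R=2n+2$ of the identity, using the normal form for geodesics from Proposition \ref{propn:normalform} applied to class (1). Since $\overline{a^m} \in E$, any geodesic for it falls into class (1), so by the normal form there is a geodesic representative that is either a short power $a^i$ with $|i| \le C_2 = 3$, or else a word $w_0 \in X$. The first option gives only $|m| \le 6$ (since $a^{\pm 6} =_G ta^{\pm 3}t^{-1}$), which is far below the bounds in the statement, so the interesting elements are reached by geodesics in $X$. I would therefore focus on computing the maximum value of $m$ for which $\overline{a^m}$ admits a geodesic of length at most $R$.

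The key computation is to track how large a power of $a$ one can represent using a word of the form $t^h a^s t^{-1} a^{k_{h-1}} \cdots t^{-1} a^{k_0}$ from the $X$ normal form, subject to a total length budget of $R=2n+2$. First I would observe that conjugating by $t$ multiplies the $a$-exponent by $q=2$: the relation $ta^m t^{-1} =_G a^{2m}$ means that each pair of ``$t \cdots t^{-1}$'' brackets doubles an exponent. So a word that goes up $h$ levels via $t^h$, reads off a power $a^s$ at the top, and descends reading coefficients $k_{h-1}, \dots, k_0$ represents the integer $m = s \cdot 2^h + \sum_{j=0}^{h-1} k_j 2^j$, a base-$2$ expansion with bounded digits ($|k_j| \le \lfloor q/2 \rfloor = 1$ and $1 \le |s| \le 3$ by the constraints in Proposition \ref{propn:normalform} for $q=2$). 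The length of such a word is roughly $2h + |s| + \sum |k_j|$, since the word contains $h$ up-steps, $h$ down-steps, and the $a$-letters. The main step is to maximize $m$ over all such expansions with length $\le 2n+2$; this is an optimization that trades the number of levels $h$ (each costing $2$ in length but doubling the leading term) against the cost of the $a$-exponents.

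The critical value is achieved by climbing as high as possible. Spending essentially the entire budget on $t$'s gives $h \approx n+1$ with a single top $a$, yielding $m = 2^{n+1}$, which exactly uses length $2(n+1) + 1$ or so and accounts for the exceptional value $m = 2^{n+1}$ in the statement. The next-largest attainable values come from using $h = n$ levels (cost $2n$ in $t$'s) and then filling the remaining small budget with top exponent and descending digits; with $s$ and the leading few $k_j$ all set to their maximal sign, the best one can do is $2^n + 2^{n-1} + 2^{n-2}$, giving the second bound. I would verify that no other allocation of the length budget beats these two values: any word reaching height $n+1$ must be essentially $t^{n+1}a^{\pm1}t^{-?}$ with almost no room left, forcing $m = 2^{n+1}$, while any word of height $\le n$ is bounded by the geometric-series estimate $2^n(1 + \tfrac12 + \tfrac14 + \cdots) $ truncated by the available length, capped at $2^n + 2^{n-1} + 2^{n-2}$.

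The main obstacle I anticipate is the careful bookkeeping of the length budget: I must confirm that the $X$ normal form length formula is exactly right (counting each $t^{\pm 1}$ and each $a^{\pm 1}$ correctly, including the top $a^s$ with $|s|$ possibly as large as $3$), and that geodesics are genuinely of this form so that the length lower bounds are tight. The delicate point is ruling out intermediate values of $m$ strictly between $2^n+2^{n-1}+2^{n-2}$ and $2^{n+1}$: I need to show that reaching any such $m$ would require height exactly $n+1$ (forcing $m=2^{n+1}$ by the tight budget) or would overrun the length $R$ at height $n$. This amounts to a clean case analysis on $h \in \{n, n+1\}$ together with the digit constraints, which is the heart of the argument.
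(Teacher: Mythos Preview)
Your approach is essentially the paper's: use the class-(1) normal form $t^h a^s t^{-1}a^{k_{h-1}}\cdots t^{-1}a^{k_0}$, read off $m = s\cdot 2^h + \sum k_j 2^j$, and maximize under the length budget $2h + |s| + \sum|k_j| \le 2n+2$. One correction will tighten your case analysis: for $q=2$ the normal form in Proposition~\ref{propn:normalform} requires $2 \le |s| \le 3$, not $|s|\ge 1$, so the length is at least $2h+2$ and hence $h \le n$ (not $n+1$). The exceptional value $m=2^{n+1}$ then arises from $h=n$, $s=2$, all $k_j=0$ (length exactly $2n+2$), and the bound $2^n+2^{n-1}+2^{n-2}$ comes from $h=n-1$, $s=3$, $k_{h-1}=1$; the remaining case $h\le n-2$ is handled by the geometric-series estimate you describe.
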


\begin{proof} 
For $\ov{a^m} \in B(R)$,
Proposition \ref{propn:normalform} says 
that there is a geodesic word
$V$ in the normal form 
$v=t^ha^st^{-1}a^{k_{h-1}}t^{-1} \cdots t^{-1}a^{k_0}$
with $2 \le |s| \le 3$ and each $|k_i| \le 1$, such that
$v =_G a^m$.  This word contains
$2h$ letters of the form $t^{\pm 1}$ and $l(v) \le 2n+2$, so $h \le n$.
Also, $v =_G a^{2^hs+2^{h-1}k_{h-1}+\cdots+k_0} =_G a^m$,
so $m=2^hs+2^{h-1}k_{h-1}+\cdots+k_0$.

If $h=n$, then $v=t^na^{\pm 2}t^{-n}$, so $m = \pm 2^{n+1}$.
If $h=n-1$, then there are at most 4 occurrences
of $a^{\pm 1}$ in the expression for $v$; that is,
$|s| +\sum|k_i| \le 4$.
The value of $m$ will be maximized if $s=+3$, $k_{h-1}=+1$,
and $k_i=0$ for all $i \le h-2$;
in this case, $m=(3)2^{n-1}+2^{n-2}=2^n+2^{n-1}+2^{n-2}$.

Finally, if $h \le n-2$, then
$m \le 2^{n-2}(3)+2^{n-3}(1)+\cdots+(1)={{2^n-1} \over {2-1}}$, so
$m<2^n+2^{n-1}+2^{n-2}$.
\end{proof}

\begin{lem}\label{lem:wugeod}
The words $w=t^{n}a^2t^{-n}$, $wa$, $wa^{-1}$, $wt^{-1}$, and
 $u=at^{n}a^2t^{-(n-1)}$ label geodesics
in the Cayley graph of $G$.
\end{lem}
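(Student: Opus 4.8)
The plan is to reduce every one of the five words to the form $a^M t^{\epsilon}$ with $\epsilon \in \{-1,0,1\}$ and then read off geodesic lengths from the lengths of pure $a$-powers, for which Lemma \ref{lem:boundingm} and the normal form of Proposition \ref{propn:normalform} give sharp control. The starting computation is the identity $t^{n}a^2t^{-n}=_G a^{2^{n+1}}$ (from $tat^{-1}=_G a^2$), together with $a^m t^{-1}=_G t^{-1}a^{2m}$ (which follows by induction from $t^{-1}a^2t =_G a$); these give $\ow =_G a^{2^{n+1}}$, $\ov{wa} =_G a^{2^{n+1}+1}$, $\ov{wa^{-1}} =_G a^{2^{n+1}-1}$, $\ov{wt^{-1}} =_G a^{2^{n+1}}t^{-1} =_G t^{-1}a^{2^{n+2}}$, and (using Lemma \ref{lem:commute}, as in the discussion preceding Lemma \ref{lem:boundingm}) $\ov{u} =_G a^{2^{n+1}+1}t$.

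First I would pin down the geodesic lengths of the relevant $a$-powers. For $w$ itself, which has length $2n+2$, I would run the normal-form estimate of Lemma \ref{lem:boundingm} but with radius $2n+1$: any geodesic for $a^{2^{n+1}}$ lies in $X$ (since $2^{n+1}>3=C_2$), hence has the shape $t^{h}a^{s}t^{-1}a^{k_{h-1}}\cdots t^{-1}a^{k_0}$ with $2\le|s|\le 3$, $|k_j|\le 1$, and length at least $2h+2$; a length at most $2n+1$ forces $h\le n-1$, whence the represented power is at most $3\cdot 2^{h}+(2^{h}-1)=2^{h+2}-1\le 2^{n+1}-1<2^{n+1}$, a contradiction. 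Thus $a^{2^{n+1}}\notin B(2n+1)$ and $w$ is geodesic, i.e. $|a^{2^{n+1}}|=2n+2$. The identical computation with $n+1$ in place of $n$ gives $|a^{2^{n+2}}|=2n+4$.

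Next, $wa$ and $wa^{-1}$ both have length $2n+3$, so it suffices to show their ($a$-power) endpoints lie outside $B(2n+2)$, and this is exactly the contrapositive of Lemma \ref{lem:boundingm}: for $m=2^{n+1}+1$ and for $m=2^{n+1}-1$ one checks $m\ne 2^{n+1}$ and $m>2^{n}+2^{n-1}+2^{n-2}=7\cdot 2^{n-2}$ (using $n>100$), so neither lies in $B(2n+2)$; hence $|a^{2^{n+1}\pm 1}|=2n+3$ and both words are geodesic.

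Finally I would treat $u$ and $wt^{-1}$, the two words whose endpoints carry a $t^{\pm 1}$, by the triangle inequality in the word metric together with $|t^{\pm 1}|=1$. Since $\ov{u}=_G a^{2^{n+1}+1}t$, we get $|a^{2^{n+1}+1}|\le |\ov{u}|+1$, so $|\ov{u}|\ge 2n+3-1=2n+2=l(u)$, and $u$ is geodesic; since $\ov{wt^{-1}}=_G t^{-1}a^{2^{n+2}}$, we get $|a^{2^{n+2}}|\le |\ov{wt^{-1}}|+1$, so $|\ov{wt^{-1}}|\ge 2n+4-1=2n+3=l(wt^{-1})$, and $wt^{-1}$ is geodesic. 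The only genuine work is the gap estimate for $a$-powers in the second step; everything else is the triangle inequality and the bookkeeping already packaged in Lemma \ref{lem:boundingm}, so I expect the main obstacle to be nothing deeper than getting the normal-form count and the inequality $2^{n+1}-1>7\cdot 2^{n-2}$ exactly right.
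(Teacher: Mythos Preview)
Your proof is correct but takes a somewhat different route from the paper's. The paper first shows $wa$, $aw$, and $wa^{-1}$ are geodesic (via Lemma~\ref{lem:boundingm}, as you do for $wa^{\pm 1}$), and then obtains $w$ and $u$ for free as \emph{prefixes} of the geodesics $wa$ and $aw$ respectively --- you instead redo the normal-form count at radius $2n+1$ to handle $w$ directly, and use the triangle inequality through $a^{2^{n+1}+1}$ for $u$. For $wt^{-1}$ the paper takes a geodesic normal form $v=t^ha^st^{-1}a^{k_{h-1}}\cdots t^{-1}a^{k_0}t^{-1}a^l$ and applies Britton's Lemma to $a^{2^{n+1}}t^{-1}v^{-1}=_G 1$ to force $l=0$, whence $v$ has length $l(w)+1$; your argument via $|a^{2^{n+2}}|=2n+4$ and the triangle inequality is more elementary and avoids Britton's Lemma entirely, at the cost of one extra counting computation. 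The paper's subword trick is slicker for $w$ and $u$; your approach is more uniform (everything is reduced to lengths of $a$-powers plus a one-step triangle inequality) and arguably cleaner for $wt^{-1}$.
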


\begin{proof} 
As a consequence of Lemma \ref{lem:boundingm}, the
vertices $\ov{a^{2^{n+1} + 1}}$ and $\ov{a^{2^{n+1} - 1}}$ 
are not in the ball $B(R)$.
The words $t^na^2t^{-n}a=wa$ and $aw$ both label paths
from the identity to $\ov{a^{2^{n+1} + 1}}$, and  the
word $wa^{-1}$ labels a path from 1 to $\ov{a^{2^{n+1} - 1}}$.
Each of these words has length
$2n+3=R+1$, so all three paths must be geodesic.  As a consequence,
the subwords $w$ of $wa$ and $u$ of $aw$ are also geodesic.

The element $\ov{wt^{-1}}$ has a geodesic normal form from
Proposition \ref{propn:normalform} given by
$v=t^ha^st^{-1}a^{k_{h-1}}t^{-1} \cdots t^{-1}a^{k_0}t^{-1}a^{l}$
with $|l| \le 1$.  Since $a^{2^{n+1}}t^{-1}v^{-1} =_G 1$, 
Lemma \ref{lem:britton} shows that $a^l=a^{2^i}$ with $i \in \Z$,
so $l=0$.  Hence $wt^{-1}$ is also geodesic.
\end{proof}

\begin{thm}\label{thm:notpac}
The group $G=BS(1,2)=\langle a,t ~|~ tat^{-1}=a^2\rangle$ 
is not $P(2)$ with respect
to the generating set $A=\{a,a^{-1},t,t^{-1}\}$.
\end{thm}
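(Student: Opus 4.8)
The plan is to refute $P(2)$ by showing that the pair $w,u$ fixed above already forces a linear detour: I would prove that \emph{every} path inside $B(R)$ from $\ow$ to $\ou$ has length at least $R/2-2$. Since $R=2n+2$ may be taken arbitrarily large, this shows that any function $f$ for which $AC_{f,r_0}$ holds must satisfy $f(R)\ge R/2-2$ along the infinite sequence $R=2n+2$; taking $C=4$ then gives $\lim_{r\to\infty}(r-Cf(r))=-\infty$ along this sequence, contradicting the defining sublinearity condition for $P(2)$. By Lemma~\ref{lem:wugeod} the words $w,u$ are geodesics of length $R$, so $\ow,\ou\in S(R)$, and it has already been checked that $\gamma=at$ gives $d(\ow,\ou)=2$; thus these are legitimate test pairs and only the linear lower bound remains.

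For the lower bound I would work in the tree model, using $\pi:\C\ra T$ together with the exponent-sum homomorphism $\ts$, which I regard as a ``height'' $h(\overline g):=\ts(g)$. Writing each vertex as a pair (horizontal position, height), one has $\ow=(2^{n+1},0)$ and $\ou=(2^{n+1}+1,1)$, since $t^na^2t^{-n}=_{\g}a^{2^{n+1}}$ and $wat=_{\g}u$. The crucial structural point is how $\pi$ organizes the bottom of the picture: the entire fibre $\{\overline{a^m}:m\in\Z\}$ projects to a single vertex $v_0$ of $T$, and its two ``up'' neighbours (the $q=2$ strips attached on top) are distinguished by parity, since $\overline{a^{2k}t}=_{\g}\overline{ta^k}$ and $\overline{a^{2k+1}t}=_{\g}\overline{ata^k}$ lie in two different fibres $v_1^{\mathrm{even}}$ and $v_1^{\mathrm{odd}}$. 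In particular $\ow$ sits over $v_0$ at the \emph{even} position $2^{n+1}$ (which by Lemma~\ref{lem:wugeod} is an isolated tip, its only $B(R)$-neighbour being $\overline{wt}\in v_1^{\mathrm{even}}$), whereas $\ou$ sits over $v_1^{\mathrm{odd}}$.

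The core estimate then runs as follows. A path $\delta\subseteq B(R)$ from $\ow$ to $\ou$ projects to a walk in $T$ from $v_0$ to $v_1^{\mathrm{odd}}$; since $v_1^{\mathrm{even}}$ and $v_1^{\mathrm{odd}}$ are distinct neighbours of $v_0$ in a tree, this walk must cross the edge $(v_0,v_1^{\mathrm{odd}})$, i.e.\ at some moment $\delta$ sits at a vertex $\overline{a^m}$ of $v_0$ with $m$ \emph{odd} and then applies $t$. As $\overline{a^m}\in B(R)$ and $m$ is odd (hence $m\ne 2^{n+1}$), Lemma~\ref{lem:boundingm} gives $|m|\le 2^n+2^{n-1}+2^{n-2}=\tfrac78\,2^{n+1}$. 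Thus the horizontal coordinate along $\delta$ drops from $2^{n+1}$ to at most $\tfrac78\,2^{n+1}$ and then returns to $2^{n+1}+1$, so the total horizontal distance travelled is at least $2\cdot 2^{n-2}=2^{n-1}$. If $H$ denotes the maximal height attained by $\delta$, then each letter $a^{\pm1}$ moves the horizontal coordinate by at most $2^{H}$, so $\delta$ contains at least $2^{\,n-1-H}$ letters $a^{\pm1}$; and since $\delta$ starts at height $0$, ends at height $1$, and peaks at $H$, it contains at least $2H-1$ letters $t^{\pm1}$. Hence $l(\delta)\ge\max\{2^{\,n-1-H},\,2H-1\}$, and an elementary case split (if $H<n/2$ the first term exceeds $2^{\,n/2-1}>n$, otherwise the second exceeds $n-1$) shows $l(\delta)\ge n-1=R/2-2$ for $n>100$, as required.

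I expect the main obstacle to be the tree/fibre bookkeeping underlying the step ``parity separates the two children of $v_0$'': one must verify carefully that $\pi$ collapses exactly the horizontal $a$-lines, that even versus odd horizontal positions really land in the two distinct up-fibres, and that \emph{any} first entrance of $\delta$ into the subtree above $v_1^{\mathrm{odd}}$ is an ascent across $(v_0,v_1^{\mathrm{odd}})$ from an odd base point (the descent-from-above alternative being excluded because in this tree $v_0$ is the unique parent of $v_1^{\mathrm{odd}}$). The remaining ingredients---the horizontal-travel-versus-maximal-height optimization and the translation of the linear bound into a failure of sublinearity---are then routine.
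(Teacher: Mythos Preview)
Your argument is correct, and the overall architecture---find a point $\overline{a^m}$ on $\delta$ with $m\ne 2^{n+1}$ via the tree projection, then use Lemma~\ref{lem:boundingm} to force $|m|\le\tfrac78\,2^{n+1}$---matches the paper. Where you diverge is in converting this into a length bound. You pass to the faithful affine picture, track the horizontal coordinate, and balance the number of $a$-letters (at least $2^{\,n-1-H}$) against the number of $t$-letters (at least $2H-1$) to get $l(\delta)\ge R/2-2$. The paper instead re-uses Lemma~\ref{lem:boundingm} in contrapositive: the initial segment $\delta_1$ from $\ow$ to $P$ represents $a^k$ with $k=2^{n+1}-m\ge 2^{n-2}>2^{(n-4)+1}$, so $\overline{a^k}\notin B(2(n-4)+2)$ and hence $l(\delta_1)>2(n-4)+2=R-8$. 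This second application of the same lemma is the trick you did not spot; it replaces your height-optimization entirely, is one line, and yields the sharper constant $R-8$ rather than $R/2-2$. Your approach has the virtue of being more self-contained (it does not need to re-invoke the normal-form lemma at a smaller radius), but for the purpose of refuting $P(2)$ either linear bound suffices.
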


\begin{proof}
Let $n \in \N$ with $n>100$,
$w=t^{n}a^2t^{-n}$,
$u=at^{n}a^2t^{-(n-1)}$, and $R=2n+2$.  Then
$\ow$ and $\ou$ lie in the sphere $S(R)$ and $d(\ow,\ou)=2$.
Let $\delta$ be a path inside the ball $B(R)$ from $\ow$ to $\ou$
that has minimal possible length.
In particular, $\delta$ does not have any subpaths 
that traverse a single vertex more than once.

From Lemma \ref{lem:wugeod},
$\ov{wa^{\pm 1}}$ and $\ov{wt^{-1}}$ are not in $B(R)$, so the
first letter of the path $\delta$ must be $t$.  
Let $\pi:\C \ra T$ denote the horizontal projection map
from the Cayley complex of $G$ to the regular tree $T$ of valence 3,
as described at the beginning of Section 2.
The vertices $\pi(\ov{w\delta(1)})=\pi(\ov{t})$ and 
$\pi(\ou)=\pi(\ov{at})$ are
the terminal vertices of the two distinct edges of $T$
with initial vertex $\pi(\ow)=\pi(1)$.  Since the projection
of the path $\delta$ begins at $\pi(1)$, goes to $\pi(\ov{t})$, and
eventually ends at $\pi(\ov{at})$, there must be another
point $P:=\ov{w\delta(j)}$ along the path $\delta$
with $\pi(P)=\pi(\ov{w\delta(j)})=\pi(1)$
and $1 < j < l(\delta)$.
Let $\delta_1$ be the subpath 
of $\delta$ from $\ov{w}$ to $P$.

Our assumption that $\delta$ has minimal possible length
implies that $P \ne \ow$.  Since $\pi(P)=\pi(1)$, $P =_G a^m$
for some $m \in \Z$.
Then Lemma \ref{lem:boundingm} shows that
$m \le 2^n+2^{n-1}+2^{n-2}$.  Since $\delta_1$ labels a path from
$\ov{a^{2^{n+1}}}$ to $\ov{a^m}$, then
$\delta_1^{-1} =_G a^{2^{n+1}-m}=a^k$ with 
$k=2^{n+1}-m \ge 2^{n-2}>2^{(n-4)+1}$.
Applying the contrapositive of Lemma \ref{lem:boundingm}, 
$\ov{a^k}$ is not in the ball $B(2(n-4)+2)$, so
the length $l(\delta_1^{-1}) > 2(n-4)+2$.
Therefore $l(\delta)>R-8$, so this length cannot be bounded above 
by a sublinear function of $R$.
\end{proof}

For the remainder of this section, let 
$G_q :=  BS(1,q)=\langle a,t ~|~ tat^{-1}=a^q\rangle$
with $q \ge 7$ and  
with generators 
$A:=\{a,a^{-1},t,t^{-1}\}$.  We will apply methods very similar
to those developed above, to show that these groups are not \mac.

\begin{lem}\label{lem:boundmq}
If $m \in \Z$ and $\overline{a^m}$ is in the ball $B(R)=B(2n+1)$
in the Cayley graph of $G_q$, then either $m=q^{n}$ or 
$m \le 3q^{n-1}$.
\end{lem}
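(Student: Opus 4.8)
The plan is to mimic the proof of Lemma \ref{lem:boundingm} almost verbatim, replacing the base $2$ by the general base $q$ and tracking how the normal form from Proposition \ref{propn:normalform} behaves for $BS(1,q)$ with $q \ge 7$. First I would invoke Proposition \ref{propn:normalform}: since $\ov{a^m} \in B(2n+1)$, there is a geodesic word $v$ representing $a^m$ in the class-$X$ normal form
\[
v = t^h a^s t^{-1} a^{k_{h-1}} t^{-1} \cdots t^{-1} a^{k_0},
\]
with $1 \le |s| \le q-1$ and each $|k_i| \le \lfloor q/2 \rfloor$. Applying the relation $t a^c t^{-1} =_G a^{qc}$ repeatedly, this word represents $a^M$ where
\[
M = s q^h + k_{h-1} q^{h-1} + \cdots + k_1 q + k_0,
\]
so $m = M$. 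The word $v$ contains $2h$ letters of the form $t^{\pm 1}$, and since $l(v) \le 2n+1$ we get $2h \le 2n+1$, hence $h \le n$.

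Next I would split into cases on $h$, exactly paralleling the earlier lemma. If $h = n$, then all $2n$ of the $t^{\pm 1}$ letters are used by the outer $t^n \cdots t^{-n}$ pattern with no room for any interior $a$-powers except the central $a^s$, forcing $v = t^n a^s t^{-n}$ with $|s|=1$ (since $l(v)\le 2n+1$ leaves room for only a single $a$-letter), giving $m = \pm q^n$. The interesting boundary case is $h = n-1$: here there is room for at most two extra $a^{\pm 1}$ letters beyond the $2(n-1)$ letters $t^{\pm 1}$, so $|s| + \sum_i |k_i| \le 3$. To maximize $m$ I would take $s$ and the highest available coefficient to their extreme positive values; the dominant contribution is $s q^{n-1}$ with $s \le 3$, and one checks that $3 q^{n-1}$ dominates any alternative distribution of the three available $a$-letters (for instance $s=2, k_{n-2}=1$ gives only $2q^{n-1}+q^{n-2} < 3q^{n-1}$ when $q \ge 7$). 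Thus $m \le 3 q^{n-1}$ in this case.

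Finally, for $h \le n-2$, I would bound $m$ crudely by putting the maximal coefficient $\lfloor q/2 \rfloor \le (q-1)$ on every power up to $q^{n-2}$ and summing the geometric series:
\[
m \le (q-1)\bigl(q^{n-2} + q^{n-3} + \cdots + 1\bigr) = q^{n-1} - 1 < 3 q^{n-1}.
\]
Combining the three cases yields that either $m = q^n$ (the $h=n$ case, taking the positive sign) or $m \le 3 q^{n-1}$, as claimed. The main obstacle I anticipate is purely bookkeeping: verifying in the $h = n-1$ case that no clever reallocation of the three free $a$-letters among the coefficients $s, k_{n-2}, \dots$ beats the choice giving $3q^{n-1}$, which is where the hypothesis $q \ge 7$ is used (it guarantees $\lfloor q/2 \rfloor \ge 3$, so $s$ can indeed reach $3$, while simultaneously $3q^{n-1}$ outstrips the mixed options). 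This case analysis, together with confirming the length count $l(v) \le 2n+1$ correctly forces $h \le n$ and limits the number of spare $a$-letters at each level of $h$, is the only genuinely delicate point; everything else is a direct transcription of the base-$2$ argument.
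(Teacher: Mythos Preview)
Your proof is correct and follows essentially the same approach as the paper's own argument: invoke the class-$X$ normal form from Proposition~\ref{propn:normalform}, read off $m$ as a base-$q$ expansion, and split on the value of $h$. Your bound in the $h\le n-2$ case is slightly cruder than the paper's (you uniformly use $q-1$ rather than $\lfloor q/2\rfloor$ on the lower coefficients), which is harmless and arguably cleaner. One small correction to your commentary: the constraint on the central exponent is $1\le |s|\le q-1$, not $|s|\le\lfloor q/2\rfloor$, so $s=3$ is available as soon as $q\ge 4$; the hypothesis $q\ge 7$ is not actually needed for this lemma but rather for the subsequent theorem, where one needs $(q-3)q^{n-1}>3q^{n-1}$.
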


\begin{proof} 
For $\ov{a^m} \in B(R)$,
Proposition \ref{propn:normalform} says 
that there is a geodesic word
$V$ in the normal form 
$v=t^ha^st^{-1}a^{k_{h-1}}t^{-1} \cdots t^{-1}a^{k_0}$
with $1 \le |s| \le q-1$ and each $|k_i| \le \lfloor {q \over 2} \rfloor$, 
such that
$v =_G a^m$.  Then $h \le n$ and
$m=q^hs+q^{h-1}k_{h-1}+\cdots+k_0$.

If $h=n$, then $v=t^na^{\pm 1}t^{-n}$, and $m = \pm q^{n}$.
If $h=n-1$, then there are at most 3 occurrences
of $a^{\pm 1}$ in the expression for $v$.
The value of $m$ will be maximized if $s=+3$,
in which case, $m=(3)q^{n-1}$.
Finally, if $h \le n-2$, then
$m \le q^{n-2}(q-1)+q^{n-3}\lfloor {q \over 2} \rfloor+\cdots
 +(\lfloor {q \over 2} \rfloor)=
q^{n-1}-q^{n-2}+{{q^{n-2}-1} \over {2-1}}\lfloor {q \over 2} \rfloor$, so
$m<3q^{n-1}$.
\end{proof}

\begin{thm}\label{thm:BS1pnotmac}
The group $G_q=BS(1,q)=\langle a,t ~|~ tat^{-1}=a^q\rangle$
with $q \ge 7$ 
is not \mac\ with respect
to the generating set $A=\{a,a^{-1},t,t^{-1}\}$.
\end{thm}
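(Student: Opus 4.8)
The plan is to mimic the structure of the proof of Theorem \ref{thm:notpac} exactly, but now exploiting the larger value of $q$ to violate the weaker \mac\ bound $f(r)=2r-1$ rather than merely sublinearity. First I would fix $n$ large (say $n>100$) and set $R:=2n+1$, then define two words analogous to the $BS(1,2)$ case, namely $w:=t^na t^{-n}$ (so that $\ov{w}=\ov{a^{q^n}}$) and a companion word $u$ ending in $t^{-(n-1)}$ with $d(\ow,\ou)=2$, with the connecting word $\gamma$ of length $2$. The exact companion $u$ should be chosen, using Lemma \ref{lem:commute}, so that $\ov{u}=\ov{a^{q^n+c}}$ for a suitable small constant $c$ and so that both $w$ and $u$ are geodesics; establishing geodicity is a quick consequence of Lemma \ref{lem:boundmq}, which already shows the relevant $\ov{a^m}$ sit outside $B(R)$.

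The geometric heart of the argument is identical to Theorem \ref{thm:notpac}: any path $\delta$ inside $B(R)$ from $\ow$ to $\ou$ must, by Lemma \ref{lem:wugeod}-type reasoning, begin with $t$, and then I would apply the horizontal projection $\pi:\C\ra T$ to the tree. Since $\pi(\ov{w\delta(1)})$ and $\pi(\ou)$ are terminal vertices of two distinct edges emanating from $\pi(1)$, the projected path must return to $\pi(1)$ at some intermediate vertex $P=\ov{a^m}$. Minimality of $\delta$ forces $P\ne\ow$, so $P$ is a genuinely different power $\ov{a^m}$ lying in $B(R)$, and Lemma \ref{lem:boundmq} then pins down $m\le 3q^{n-1}$.

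The decisive step, and where the large-$q$ hypothesis is used, is the estimate on $l(\delta_1)$, where $\delta_1$ is the subpath from $\ow$ to $P$. Here $\delta_1^{-1}=_G a^{q^n-m}$, and since $m\le 3q^{n-1}$ we get $q^n-m\ge q^n-3q^{n-1}=q^{n-1}(q-3)$, which for $q\ge 7$ is comfortably larger than $3q^{n-1}$; more precisely it exceeds $q^{n-1}\cdot 4 > 3q^{(n-1)-1}\cdot q$ so that, by the contrapositive of Lemma \ref{lem:boundmq}, the element $\ov{a^{q^n-m}}$ lies \emph{outside} a ball whose radius is essentially $2(n-1)+1=R-2$. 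The point is that when $q\ge 7$ the ``gap'' $q-3$ is large enough that traversing from $\ov{a^{q^n}}$ to any admissible $\ov{a^m}$ costs at least $R-O(1)$ steps, forcing $l(\delta)\ge 2R-O(1)$ and in fact $l(\delta)>2R-1$, which directly contradicts \mac.

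I expect the main obstacle to be the precise bookkeeping of additive constants: one must choose $u$ and track the exact radius in Lemma \ref{lem:boundmq} carefully enough that the resulting lower bound on $l(\delta)$ beats $2R-1$ rather than merely $2R-C$ for some unknown $C$. This is a constant-chasing issue rather than a conceptual one; the hypothesis $q\ge 7$ is exactly what makes $q-3\ge 4$, giving the two disjoint ``halves'' of $\delta$ each length close to $R$ and hence the needed total length strictly above $2R-1$.
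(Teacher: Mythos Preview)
Your approach is essentially the paper's: the same words $w=t^nat^{-n}$ and $u=at^nat^{-(n-1)}$ (with $\gamma=at$), the same tree projection forcing an intermediate point $P=\ov{a^m}$, and the same use of Lemma~\ref{lem:boundmq}. The outline is correct.

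However, your worry about ``constant-chasing'' is a genuine gap as written, and the paper resolves it in a way you have not hit upon. Applying Lemma~\ref{lem:boundmq} at radius $R-2$ only yields $l(\delta_1)\ge R-1$, and the analogous bound on the second half gives $l(\delta)\ge 2R-2$, which does \emph{not} violate \mac. The fix is to apply Lemma~\ref{lem:boundmq} at radius $R$ itself and exploit its dichotomy: for $k=q^n-m$ one has $k\ge(q-3)q^{n-1}>3q^{n-1}$ (this is exactly where $q\ge 7$ enters), so either $k=q^n$, meaning $P=1$, or $\ov{a^k}\notin B(R)$, meaning $l(\delta_1)\ge R+1$. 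In the second alternative the paper also bounds the \emph{other} half by observing that $\delta_2t^{-1}$ represents $a^{k+1}$ with $k+1>3q^{n-1}$, so $l(\delta_2)\ge R$; together $l(\delta)\ge 2R+1$, contradicting minimality of $\delta$ since $w^{-1}u$ has length $2R$. Hence the minimal $\delta$ must pass through the identity and have length exactly $2R$, which beats the \mac\ bound $2R-1$ on the nose. So: keep your outline, but apply the lemma at radius $R$ and use its ``either $q^n$ or small'' dichotomy on both halves of $\delta$, rather than dropping to a smaller ball and hoping the constants work out.
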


\begin{proof}
Let $n$ be an arbitrary natural number with $n>100$.
Let $w':=t^{n}at^{-n}$ and
$u':=at^{n}at^{-(n-1)}$.  Then $w'$ and $u'$ are words
of length $R:=2n+1$.  
As a consequence of Lemma \ref{lem:boundmq}, an argument similar
to the proof of Lemma \ref{lem:wugeod} 
shows that the
words $w'$, $u'$, $w'a^{\pm 1}$, and $w't^{-1}$  are geodesics.
Hence $\ov{w'}$ and $\ov{u'}$ lie in $S(R)$.
Lemma \ref{lem:commute} says that
$w'at^{-1}=(a)(t^{n}at^{-n})t^{1}=_Gu'$, so
$d(\ov{w'},\ov{u'})=2$ and the word $\gamma:=at^{1}$ labels a
path from $\ov{w'}$ to $\ov{u'}$. 
Let $\delta$ be a path inside the ball $B(R)$ from $\ov{w'}$ to $\ov{u'}$
that has minimal possible length.  

Using the information on geodesics from the previous
paragraph, the
first letter of the path $\delta$ must be $t$. 
Let $\pi:\C \ra T$ denote the horizontal projection map
from the Cayley complex of $G_q$ to the regular tree $T$ of valence $q+1$.
The vertices $\pi(\ov{w'\delta(1)})=\pi(\ov{t})$ and 
$\pi(\ov{u'})=\pi(\ov{at})$ are
the terminal vertices of two distinct edges of $T$
with initial vertex $\pi(\ov{w'})=\pi(1)$.  Consequently,
there must be a
point $P:=\ov{w'\delta(j)}$ along the path $\delta$
with $\pi(P)=\pi(\ov{w'\delta(j)})=\pi(1)$
and $1 < j < l(\delta)$.
Write $\delta=\delta_1\delta_2$ where
$\delta_1$ is the subpath 
of $\delta$ from $\ov{w'}$ to $P$.

The vertex $P \ne \ov{w'}=\ov{a^{q^n}}$, and $P =_{G_q} a^m$
for some $m \in \Z$, so
Lemma \ref{lem:boundmq} shows that
$m \le 3q^{n-1}$.  The word $\delta_1$ labels a path from
$\ov{a^{q^{n}}}$ to $\ov{a^m}$, so
$\delta_1 =_{G_q} a^k$ with 
$k=q^{n}-m \ge (q-3)q^{n-1} >3q^{n-1}$ since $q \ge 7$.
Then Lemma \ref{lem:boundmq} says that either $k=q^n$ or
$\ov{a^k}$ is not in the ball $B(2n+1)$.  

If $\ov{a^k}$ is not in the ball $B(2n+1)$, then
the length $l(\delta_1) > 2n+1=R$.
Note that $u't^{-1}=aw'=_{G_q}a^{q^n+1}$.  The
word $\delta_2t^{-1}$ labels a path from 
$P=\ov{a^m}$ to $\ov{a^{q^n+1}}$, so
$\delta_2t^{-1} =_{G_q} a^{k+1}$ with 
$k+1>3q^{n-1}$.  Then the length $l(\delta_2t^{-1})>R$ as well.
Thus $l(\delta_2) \ge R$ and
the length $l(\delta) \ge R+1+R=2R+1$.  
Since there is a path $w'^{-1}u'$ of length $2R$ inside 
$B(R)$ from $\ov{w'}$ to $\ov{u'}$,
this contradicts our choice of $\delta$
with minimal length.

Then the path $\delta$ satisfies 
$k=q^n$, so $P=1$.  Therefore the path $\delta$ reaches
the vertex corresponding to the identity, and
the length of the path $\delta$ is $2R$.
\end{proof}

\begin{cor}
The properties \mac\ and \mpac\ are not commensurability invariant,
and hence also not quasi-isometry invariant.
\end{cor}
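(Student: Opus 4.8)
The plan is to deduce the Corollary from the two theorems of this section by exhibiting $BS(1,8)$ as a finite-index subgroup of $BS(1,2)$. By Theorem~\ref{thm:BS12mac}, the group $BS(1,2)$ with generating set $A=\{a,a^{-1},t,t^{-1}\}$ is \mpac, and hence also \mac, since the implication \mpac\ $\Rightarrow$ \mac\ is part of the chain recorded in the introduction. By Theorem~\ref{thm:BS1pnotmac} applied with $q=8$, the group $BS(1,8)$ with the corresponding generating set is not \mac; and because \mpac\ $\Rightarrow$ \mac, any group failing \mac\ must also fail \mpac. So I will have a pair of groups, one satisfying each property and one failing each, and it remains only to show that the two groups are commensurable (and therefore quasi-isometric).

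For the commensurability I would take the subgroup $H:=\langle a,t^3\rangle$ of $\g:=BS(1,2)$. Since $t^3at^{-3}=_{\g}a^{2^3}=a^8$, the assignment $a\mapsto a$, $t\mapsto t^3$ defines a surjective homomorphism $BS(1,8)\ra H$. To verify that it is an isomorphism and to compute the index, I would use the exponent-sum homomorphism $\ts:\g\ra\Z$: its restriction to $H$ has image $3\Z$, while its kernel (the subgroup of elements of $t$-exponent sum $0$, isomorphic as an additive group to the dyadic rationals $\Z[\tfrac12]$) is contained in $H$, because every dyadic rational already lies in $\langle a,t^3\rangle$ (the conjugate $t^{-3n}at^{3n}$ corresponds to $\tfrac{1}{8^n}$, and $\Z[\tfrac18]=\Z[\tfrac12]$). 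Thus $H=\ts^{-1}(3\Z)$ has index $[\Z:3\Z]=3$ in $\g$, and comparing $\ts$ on $BS(1,8)$ with $\ts$ on $\g$ shows that the surjection $BS(1,8)\ra H$ is injective on both the kernel and the quotient of $\ts$, hence injective. Therefore $H\cong BS(1,8)$ is an index-$3$ subgroup of $BS(1,2)$.

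Consequently $BS(1,2)$ and $BS(1,8)$ are commensurable, yet one is \mpac\ (and \mac) while the other is neither, so neither \mac\ nor \mpac\ is a commensurability invariant. Since a finite-index subgroup of a finitely generated group is quasi-isometric to the whole group, the same example shows that these properties are not quasi-isometry invariants either. The only genuinely non-routine point is the finite-index embedding above — in particular confirming that the natural surjection $BS(1,8)\ra\langle a,t^3\rangle$ is injective with image of index exactly $3$; I would also flag that, because these almost convexity conditions depend a priori on the generating set (as Thiel showed for $AC$), the statement must be read as asserting the existence of a commensurable pair exhibiting different behavior for the chosen generating sets, rather than as a claim independent of the choice of generators.
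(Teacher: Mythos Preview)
Your proof is correct and follows the same approach as the paper: exhibit $BS(1,8)$ as the index-$3$ subgroup $\langle a,t^3\rangle$ of $BS(1,2)$, then invoke Theorems~\ref{thm:BS12mac} and~\ref{thm:BS1pnotmac}. The paper simply asserts the isomorphism $\langle a,t^3\rangle\cong BS(1,8)$ and the index without justification, whereas you supply a clean argument via the $t$-exponent-sum map and the identification of its kernel with $\Z[\tfrac12]=\Z[\tfrac18]$; your remark about generating-set dependence is also a reasonable caveat that the paper leaves implicit.
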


\begin{proof}
The index 3 subgroup of 
$BS(1,2)=\langle a,t ~|~ tat^{-1}=a^2 \rangle$
generated by $a$ and $t^3$ is isomorphic to $BS(1,8)$.
Theorem \ref{thm:BS12mac} shows that $BS(1,2)$ is \mpac\  and hence \mac,
and Theorem \ref{thm:BS1pnotmac} proves that $BS(1,8)$ has neither property.
\end{proof}

%%%%%%%%%%%%%%%%%%%%%%
\section{Stallings' group is not \mac }
%%%%%%%%%%%%%%%%%%%%%%

In \cite{\Stallings}, Stallings showed that the group
with finite presentation
\begin{eqnarray*}
S&:= & \langle a,b,c,d,s ~|~ [a,c]=[a,d]=[b,c]=[b,d]=1, \\
%s^a=s^b=s^c=s^d
&& (a^{-1}b)^s=a^{-1}b, (a^{-1}c)^s=a^{-1}c, (a^{-1}d)^s=a^{-1}d\rangle
\end{eqnarray*}
does not have homological type $FP_3$.  In our notation,
$[a,c]:=aca^{-1}c^{-1}$ and $(a^{-1}b)^s:=sa^{-1}bs$.
Let $X:=\{a,b,c,d,s,a^{-1},b^{-1},c^{-1},d^{-1},s^{-1}\}$
be the %symmetrized
inverse closed
 generating set, and let $\Gamma$ be the
corresponding Cayley graph of $S$.

Let $G$ be the subgroup of 
$S$ generated by
 $Y:=\{a,b,c,d,a^{-1},b^{-1},c^{-1},d^{-1}\}$, and
let $\Lambda$ be the corresponding Cayley graph of $G$.
Then $G$ is the
direct product of the nonabelian free groups
$\langle a,b \rangle$ and $\langle c,d\rangle$.
Let $H$ be the finitely generated subgroup of $G$ 
given by $H=\langle a^{-1}b, a^{-1}c, a^{-1}d\rangle$.

\comment{Is there a reference that proves Lemma \ref{lem:hexpsum} so
that we can skip the proof?}

\begin{lem}\label{lem:hexpsum}
The group $H$ consists of all elements of
$G$ that can be represented by a word 
over $Y$ of exponent sum zero.  Moreover,
every word,
over $X$ or $Y$, representing an element of $H$ must
have exponent sum zero.
\end{lem}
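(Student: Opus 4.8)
The plan is to realize the total exponent sum as a homomorphism and then identify $H$ with its kernel inside $G$. First I would record that the assignment sending each generator of $S$ to $1$ extends to a well-defined homomorphism $\sigma\colon S\to\Z$: every defining relator of $S$ is either a commutator (which has exponent sum zero automatically) or has the form $v^{s}v^{-1}$ with $v\in\{a^{-1}b,a^{-1}c,a^{-1}d\}$, and since conjugation preserves exponent sum, $\sigma(v^{s}v^{-1})=\sigma(v)-\sigma(v)=0$; thus $\sigma$ is well defined. Write $\psi$ for the restriction of $\sigma$ to $G$. Because $\sigma(a^{-1}b)=\sigma(a^{-1}c)=\sigma(a^{-1}d)=0$, the subgroup $H$ lies in $\ker\psi\subseteq\ker\sigma$. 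This already yields the ``moreover'' clause: any word over $X$ (in particular over $Y$) representing an element $h\in H$ has exponent sum equal to $\sigma(h)=0$, since $\sigma$ is constant on each element of $S$.

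For the first statement it remains to prove $\ker\psi\subseteq H$, i.e.\ that $H$ is \emph{all} of the exponent-sum-zero part of $G$. The key step, which I expect to be the main obstacle, is to show that $H$ is \emph{normal} in $G$. Setting $t_b:=a^{-1}b$, $t_c:=a^{-1}c$, $t_d:=a^{-1}d$, the relations $[a,c]=[a,d]=1$ give $at_ca^{-1}=t_c$ and $at_da^{-1}=t_d$ immediately, since $a$ commutes with $c$ and $d$. The delicate conjugate is $t_b$: here $at_ba^{-1}=ba^{-1}$, and I would verify the rewriting $ba^{-1}=t_c^{-1}t_bt_c$ (which amounts to $c$ commuting with $ba^{-1}$, using $[a,c]=[b,c]=1$), together with $a^{-1}t_ba=a^{-2}ba=t_ct_bt_c^{-1}$. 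Thus all six conjugates $a^{\pm1}t_ia^{\mp1}$ lie in $H$. Since $b=at_b$, $c=at_c$, $d=at_d$ show $G=\langle a,t_b,t_c,t_d\rangle=\langle a,H\rangle$, this gives $aHa^{-1}=H$, hence $H\triangleleft G$.

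With normality in hand the conclusion is quick. The relations $b=at_b$, $c=at_c$, $d=at_d$ show that $G/H$ is generated by the single image $\bar a$, so $G/H$ is cyclic; moreover $\psi$ descends to a surjection $\bar\psi\colon G/H\to\Z$ with $\bar\psi(\bar a)=1$. A cyclic group admitting a surjection onto $\Z$ must be infinite cyclic with that surjection an isomorphism, so $\ker\bar\psi$ is trivial and therefore $\ker\psi=H$. Hence $H$ is precisely the set of elements of $G$ representable by a word over $Y$ of exponent sum zero, completing the proof. The only genuine content is the identity $ba^{-1}=t_c^{-1}t_bt_c$ underlying normality; everything else is exponent-sum bookkeeping and the elementary structure of cyclic groups.
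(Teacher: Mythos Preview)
Your argument is correct, and it takes a genuinely different route from the paper's. The paper proceeds constructively: it first verifies by direct computation that $ab^{-1},\,ac^{-1},\,ad^{-1}\in H$ (the key identity being $ab^{-1}=(ac^{-1})(a^{-1}b)^{-1}(a^{-1}c)$, essentially the same computation as your $ba^{-1}=t_c^{-1}t_bt_c$), deduces that every two-letter word $l_1^{-1}l_2$ or $l_1l_2^{-1}$ with $l_1,l_2\in\{a,b,c,d\}$ lies in $H$, and then rewrites an arbitrary exponent-sum-zero word as a product of such two-letter pieces by inserting commuting inverse pairs between consecutive letters. Your approach instead packages the same core identity as a proof that $H$ is normal in $G$, and then finishes structurally via $G/H\cong\Z$. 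Both arguments hinge on the same commutation trick; yours is more conceptual and avoids the explicit decomposition of an arbitrary word, while the paper's is more explicit and self-contained (no appeal to quotient groups). For the ``moreover'' clause the two proofs are essentially identical: both observe that the defining relators of $S$ and the generators of $H$ all have exponent sum zero, which is exactly your homomorphism $\sigma$.
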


\begin{proof}
Using the fact that $a$ commutes with both $c$ and $d$,
we have that $ca^{-1}, da^{-1} \in H$, and so their inverses
$ac^{-1}, ad^{-1} \in H$.  
Using the fact that $b$ and $c$ commute,
\begin{eqnarray*}
ab^{-1}&=_S& a(c^{-1}c)b^{-1}=_S
(ac^{-1})b^{-1}c=_S (ac^{-1})b^{-1}(aa^{-1})c \\
&=_S& (ac^{-1})(a^{-1}b)^{-1}(a^{-1}c) \in H
\end{eqnarray*}
as well.
Taking products of the form 
$h_1^{-1}h_2$ with 
$$
h_1,h_2 \in \{a^{-1}b, a^{-1}c, a^{-1}d, ab^{-1}, ac^{-1}, ad^{-1}\}
$$
shows that $l_1^{-1}l_2, l_1l_2^{-1} \in H$ for all positive letters
$l_1,l_2 \in \{a,b,c,d\}$.
Finally, consider an arbitrary
word $w=l_1^{\epsilon_1} \cdots l_n^{\epsilon_n}$
with $l_i \in \{a,b,c,d\}$, $\epsilon_i=\pm 1$, and 
$\sum_i \epsilon_i=0$.  For each $i$, 
there is a letter $m \in Y$ which
commutes with both 
$l_i^{\epsilon_i}$ and $l_{i+1}^{\epsilon_{i+1}}$.  
Repeating the technique above 
of inserting
the inverse pair $mm^{-1}$ between
$l_i^{\epsilon_i}$ and $l_{i+1}^{\epsilon_{i+1}}$ 
and applying the commutation relations
as needed, then $w$ can be written as a product
of elements of exponent sum zero
of the form $m_1m_2$ with $m_i \in Y$.  Then $\ow \in H$.
The second sentence of this lemma follows from
the fact that the exponent sum for each of generators
of $H$ and each of the relators in
the presentation for $S$ is zero.
%notes p. 323-4.
\end{proof}

Let $\phi:H \ra H$ be the identity function.
Then $S$ is the \hnn\ extension $S=G \star_{\phi}$
with stable letter $s$, and $s$ commutes with all
of the elements of $H$.

\begin{lem}\label{lem:geodg}
%Let $w \in X^*$ be a geodesic in the group $S$.
Let $w \in X^*$.  
\begin{enumerate}
\item If $w$ is a geodesic in $\Gamma$, then
the word $w$ cannot contain a subword of the form
$sus^{-1}$ or $s^{-1}us$ with $\ou \in H$.
\item If $\ow \in G$ and
$w$ is a geodesic in $\Gamma$, then $w \in Y^*$
and $w$ is a geodesic in $\Lambda$. 
%in this case, $w$ does not contain
%an occurrence of either $s$ or $s^{-1}$.
\item If $w \in Y^*$ and $w$ is a geodesic in $\Lambda$, then
$w$, $sw$ and $ws$ are all geodesics in $\Gamma$.
\end{enumerate}
\end{lem}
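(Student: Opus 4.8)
The plan is to derive all three parts from Britton's Lemma for the HNN extension $S = G\star_\phi$ (stable letter $s$, both associated subgroups equal to $H$, $\phi=\mathrm{id}_H$), together with the single observation from part (1) that a geodesic of $\Gamma$ is automatically \emph{reduced} in Britton's sense. For part (1) itself, the argument is immediate: since $s$ commutes with every element of $H$, whenever $\ov{u}\in H$ we have $sus^{-1}=_S u$ and $s^{-1}us=_S u$, so replacing such a subword of $w$ by $u$ yields a representative of $\ov{w}$ that is two letters shorter, contradicting geodesy. The content of part (1) is exactly that a geodesic contains no Britton pinch, since the two pinch types for this extension are precisely $sus^{-1}$ and $s^{-1}us$ with $\ov{u}\in H$.

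For part (2), I would first record the consequence just noted: a geodesic $w$ of $\Gamma$ is reduced. Writing $w=y_0s^{\epsilon_1}y_1\cdots s^{\epsilon_k}y_k$ with each $y_i\in Y^*$, I would assume $k\ge 1$ and suppose $\ov{w}=g\in G$. Then $wg^{-1}$ modifies only the final syllable $y_k$, so it remains reduced with $k\ge 1$ stable letters while representing $1$; Britton's Lemma forbids this, forcing $k=0$, i.e. $w\in Y^*$. The geodesy of $w$ in $\Lambda$ then follows by comparing metrics: since $Y\subseteq X$ and $G$ embeds in $S$ as the base group, every $Y$-word representing $\ov{w}$ is an $X$-word representing the same element, so $d_\Lambda(1,\ov{w})\ge d_\Gamma(1,\ov{w})=l(w)$, and the opposite inequality is trivial because $w$ itself is a $Y$-word of length $l(w)$.

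For part (3), the geodesy of $w$ in $\Gamma$ comes for free from part (2): a $\Gamma$-geodesic $v$ for $\ov{w}\in G$ lies in $Y^*$ and is $\Lambda$-geodesic, so $l(v)=d_\Lambda(1,\ov{w})=l(w)$ since $w$ is $\Lambda$-geodesic. For $sw$ I would take a $\Gamma$-geodesic $v$ of $\ov{sw}$; by part (1) it is reduced, and since every reduced expression of a fixed element of an HNN extension uses the same number of stable letters (the normal form theorem), $v$ has exactly one stable letter, necessarily $s$, so $v=y_0sy_1$ with $y_i\in Y^*$. From $y_0sy_1=_S sw$ I get $s^{-1}y_0s\,y_1=_S w\in G$; if $\ov{y_0}\notin H$ this would be a reduced word with two stable letters equal to an element of $G$, contradicting Britton, so $\ov{y_0}\in H$, whence $s^{-1}y_0s=_S y_0$ and $w=_S y_0y_1$. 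As $w$ is $\Lambda$-geodesic, $l(y_0)+l(y_1)\ge l(w)$, giving $l(v)=l(y_0)+l(y_1)+1\ge l(w)+1=l(sw)$, so $sw$ is geodesic; the case $ws$ is the mirror image, yielding $\ov{y_1}\in H$ and the same length bound.

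The routine parts are the pinch bookkeeping; the one point demanding care, and the main obstacle, is invoking the correct form of Britton's Lemma and the HNN normal form theorem for $S$ (in particular the invariance of the number of stable letters across reduced expressions), and checking that introducing a single $s^{\pm1}$ at a junction either creates a genuine pinch or preserves reducedness—this is what prevents the extra stable letter from being absorbed and forces the length to increase by exactly one.
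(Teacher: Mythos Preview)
Your proof is correct and follows essentially the same route as the paper. The only cosmetic difference is in part (3): where you invoke the HNN normal form theorem to conclude that a $\Gamma$-geodesic for $\ov{sw}$ has exactly one stable letter (an $s$), the paper applies Britton's Lemma directly to $w^{-1}s^{-1}\mu=_S 1$ to locate the pinch $s^{-1}\mu_1 s$ with $\ov{\mu_1}\in H$; both arguments then finish identically by commuting $\mu_1$ past $s$ and comparing lengths with the $\Lambda$-geodesic $w$.
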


\begin{proof}
Part (1) follows directly from the fact that
for $\ou \in H$, $sus^{-1}=_S s^{-1}us=_S u$.
In parts (2) and (3),  suppose that $g \in G$,
$v$ is a geodesic word over $X$ in $\Gamma$ 
representing $g$, and $w \in Y^*$ is a geodesic
in $\Lambda$ with $\ow=g$ also.
Then $vw^{-1}=_S 1$.  Britton's Lemma applied to the HNN extension
$S$ says that
if either $s$ or $s^{-1}$ occurs in
$vw^{-1}$, then $vw^{-1}$, and hence $v$, must contain a
subword of the form
$sus^{-1}$ or $s^{-1}us$ with $\ou \in H$, 
contradicting part (1).  Therefore $v \in Y^*$. 
Since  $v,w \in Y^*$ and
$v$ is is a geodesic in $\Lambda$, then $l(v) \le l(w)$
Similarly since $v,w \in X^*$ and $w$ is a geodesic in
$\Gamma$, $l(w) \le l(v)$.  Thus $v$ is also a geodesic
in $\Lambda$, and $w$ is a geodesic in $\Gamma$.
 
For the remainder of part (3), 
suppose that $\mu$ is a geodesic representative
of $sw$ in $\Gamma$.  Then $w^{-1}s^{-1}\mu=_S 1$.  
Britton's Lemma then says that $w^{-1}s^{-1}\mu$ must contain 
a subword of the form $sus^{-1}$ or $s^{-1}us$ with
$\ou \in H$.  Since $\mu$ is a geodesic, 
part (1) says that the $sus^{-1}$ or $s^{-1}us$
cannot be completely contained in $\mu$, so we can write
$\mu=\mu_1s\mu_2$ with
$\ov{\mu_1} \in H$.  Since $\mu_1$ and $s$ commute,
$s\mu_1\mu_2 =_S \mu =_S  sw$, so $\mu_1\mu_2 =_S w$ and both
$\mu_1\mu_2$ and $w$ are geodesics representing the same element of $G$.
Hence $l(\mu_1\mu_2) = l(w)$.  Then 
$l(\mu)=l(\mu_1)+1+l(\mu_2)=l(w)+1=l(sw)$, so $sw$ is a geodesic
in $\Gamma$.  The proof that $ws$ is also a geodesic
in $\Gamma$ is similar.
\end{proof}

The proof of the following theorem relies further 
on the HNN extension structure of Stallings' group $S$.  In
particular, we utilize an ``$s$-corridor'' to show that
the path $\delta$ in the definition of \mac\ cannot
exist.

\begin{thm}\label{thm:stall}
$(S,X)$ is not \mac\ with respect to the generating set $X$.
\end{thm}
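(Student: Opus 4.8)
The plan is to follow the template of Theorems~\ref{thm:notpac} and~\ref{thm:BS1pnotmac}: for each large $n$ I would exhibit two geodesic words $w,u$ over $X$, of a common length $R=R(n)\to\infty$, with $\ov w,\ov u\in S(R)$ and $d(\ov w,\ov u)=2$, realized by an explicit word $\gamma$ of length $2$ containing an $s^{\pm1}$. Since $(S,X)$ is \mac\ exactly when $AC_{f,r_0}$ holds with $f(r)=2r-1$ for some $r_0$, it suffices to show that for these pairs \emph{every} path $\delta$ inside $B(R)$ from $\ov w$ to $\ov u$ has length at least $2R$, so that no path of length $\le 2R-1=f(R)$ exists for arbitrarily large $R$. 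Note that the ``through the origin'' path $w^{-1}u$ always has length $2R$ and lies in $B(R)$, so $2R$ is the right (and tight) target; it is the inequality $2R>f(R)$ that defeats \mac.

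The engine of the argument is the HNN structure $S=G\star_\phi$ together with its $s$-corridors. I would record the crossings of $\delta$ by projecting $\Gamma$ onto the Bass--Serre tree $T_s$ of $S=G\star_\phi$, exactly as the projection $\pi:\C\to T$ is used in the $BS(1,q)$ theorems: each $s^{\pm1}$ in a word moves between adjacent vertices (cosets of $G$) of $T_s$, while each letter of $Y$ fixes a vertex. The words $w,u$ would be chosen so that $\ov w$ and $\ov u$ lie in distinct vertices of $T_s$ separated by a single edge $E$ (an $H$-coset), whence any path between them must cross $E$ (a net total of once); a crossing occurs precisely when $\delta$ sits at an element $x$ of that $H$-coset and is multiplied by $s^{\pm1}$. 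To control this crossing I would pass to the van Kampen diagram for $w\delta u^{-1}=_S 1$ and use its $s$-corridors. Because every defining relator of $S$ containing $s$ has exactly two $s$-edges, the $s$-edges of the diagram pair off into corridors, and since $s$ commutes with $H$ the two sides of each corridor spell \emph{equal} elements of $H$. Tracing the corridor issuing from the $s$-edge of $\gamma$ (note $w\in Y^*$ by Lemma~\ref{lem:geodg}(2), so $w$ contributes no $s$-edge) forces a long subword of $\delta$ lying in $H$, i.e.\ forces $\delta$ to traverse the full corridor.

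To turn this into the numerical bound I would prove a bounding lemma in the spirit of Lemmas~\ref{lem:boundingm} and~\ref{lem:boundmq}, using Lemma~\ref{lem:hexpsum}: since $H$ is exactly the set of exponent-sum-zero elements and every word representing an element of $H$ has exponent sum $0$, I can pin down which elements of the relevant $H$-coset lie in $B(R-1)$ and show that this crossing set is at $\Lambda$-distance at least $R-O(1)$ from both $\ov w$ and $\ov u$. Combined with Lemma~\ref{lem:geodg}(2),(3) (which identify $\Gamma$-geodesics of $G$-elements with $\Lambda$-geodesics, and show $w$, $sw$, $ws$ remain geodesic), this forces the two subpaths of $\delta$ before and after its crossing of $E$ to have length at least $R-O(1)$ each; a careful accounting of the additive constants, together with Britton's Lemma applied to the crossing edge, then upgrades the total to the sharp bound $2R$.

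The main obstacle, and the reason the argument must differ from the $BS(1,q)$ case, is that $S=G\star_\phi$ is built from the \emph{identity} isomorphism $\phi=\mathrm{id}_H$, so there is no exponential (indeed no) distortion of $G$ inside $S$: Lemma~\ref{lem:geodg}(2) shows $G$-elements are undistorted. Hence the extremality that prevents a short in-ball connection cannot come from a folding phenomenon as in Lemma~\ref{lem:boundingm}; it must be produced purely by the $s$-corridor and the exponent-sum invariant. The two delicate points I expect to fight with are (i) choosing $w,u$ so that they are genuinely geodesic on $S(R)$ while \emph{every} length-$\le 2$ connection $\gamma$ is forced out of $B(R)$, and (ii) making the corridor/coset estimate sharp enough to give $2R$ rather than merely $2R-O(1)$ --- that is, showing the obligatory crossing point of $E$ cannot lie closer than essentially the full radius to either endpoint. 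This last quantitative step is the crux of the proof.
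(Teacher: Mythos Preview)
Your plan is correct and coincides with the paper's approach: the paper exhibits the explicit pair $\alpha=b^{-(n+1)}a^{n+1}$ and $\beta=sb^{-(n+1)}a^n$ on $S(2n+2)$ with $\gamma=sa^{-1}$, applies Britton's Lemma to write any in-ball path as $\delta=w_1sw_2$ with $\ov{w_1},\ov{w_2a}\in H$, and then uses Lemma~\ref{lem:geodg}(3) to force the crossing vertex $\ov{\sigma}=\ov{\alpha w_1}\in B(2n+1)$. The ``crux'' you anticipate is dispatched not by a bounding lemma on the $H$-coset in the style of Lemma~\ref{lem:boundingm}, but by a direct free-group computation in $F_2=\langle a,b\rangle$: writing $\sigma$ and the geodesic reps $q_i$ of $\ov{w_i}$ in $F_2\times F_2$ block form and using the exponent-sum-zero characterization of $H$ (Lemma~\ref{lem:hexpsum}), one finds that either $l(\sigma)\ge 2n+2$ (contradicting $\ov\sigma\in B(2n+1)$) or $l(q_i)\ge 2n+1$ for both $i$ (contradicting $l(q_1)+1+l(q_2)\le 4n+2$). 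So rather than bounding the crossing set away from \emph{both endpoints} as you propose, the paper bounds it away from the \emph{origin} and gets a sharper contradiction; your Bass--Serre/corridor language is equivalent to the paper's direct Britton argument.
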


\begin{proof}
Let $\alpha := b^{-(n+1)}a^{n+1}$ and $\beta := sb^{-(n+1)}a^n$, 
and let $\chi = b^{-(n+1)}a^n$ be their maximal common subword. 
The word $\alpha \in Y^*$, so $\ov{\alpha} \in G$;
in particular, the exponent sum of $\alpha$ is zero, so
Lemma \ref{lem:hexpsum} says
$\ov{\alpha} \in H$ also.
Since $\alpha$ is a geodesic in the Cayley graph $\Lambda$ of
the group $G=F_2 \times F_2$,  
Lemma \ref{lem:geodg}(3) says that 
$\alpha$ is a geodesic in $\Gamma$.
Similarly, $\chi$ is a geodesic in $\Lambda$, so
Lemma \ref{lem:geodg}(3) says that 
$\beta=s\chi$ is also geodesic in $\Gamma$.
Thus $\alpha $ and $\beta $ lie in the sphere of radius $2n+2$
in $\Gamma$.
Since 
$$
\alpha^{-1}\beta =
a^{-(n+1)}b^{n+1}sb^{-(n+1)}a^n=_S
sa^{-(n+1)}b^{n+1}b^{-(n+1)}a^n=_S sa^{-1},
$$ 
the distance $d(\ov{\alpha},\ov{\beta})=2$, for all
natural numbers $n$.

Suppose there is a path $\delta$ of length at 
most $2(2n+2)-1$ inside the
ball of radius $2n+2$ between $\ov{\alpha}$ and
$\ov{\beta}$.  
Since the relators in the presentation of $S$
have even length, the word $\delta$ must have length
at most $2(2n+2)-2=4n+2$.

Applying Britton's Lemma to the product
$\delta as^{-1}=_S1$ implies that 
$\delta=w_1sw_2$ with $\ov{w_1},\ov{w_2a} \in H$.
Then $\ov{w_1},\ov{w_2} \in G=F_2 \times F_2$. 
Lemma \ref{lem:geodg}(2)
and the direct product structure imply that
there are geodesic representatives 
$q_1$ and $q_2$ of $\ov{w_1}$ and $\ov{w_2}$,
respectively, that have the form
$q_1=q_{1_{a,b}}q_1{_{c,d}}$ and $q_2=q_{2_{c,d}}q_{2_{a,b}}$
with $q_{1_{a,b}}, q_{2_{a,b}} \in \{a,b,a^{-1},b^{-1}\}^*$
and $q_{1_{c,d}}, q_{2_{c,d}} \in \{c,d,c^{-1},d^{-1}\}^*$.

Since $\ov{\alpha}$ and $\ov{q_1}=\ov{w_1}$
are both elements of $H$,  
$\ov{\alpha q_1} \in H$ as well.
From the direct product structure, there is a
geodesic representative $\sigma \in Y^*$ of $\ov{\alpha q_1}$
of the form
$\sigma=\sigma_{a,b}\sigma_{c,d}$ 
with $\sigma_{a,b} \in \{a,b,a^{-1},b^{-1}\}^*$
and $\sigma_{c,d} \in \{c,d,c^{-1},d^{-1}\}^*$.
(see Figure \ref{fig:stall}).
\begin{figure}
\bt{ccc}\includegraphics[scale=.35]{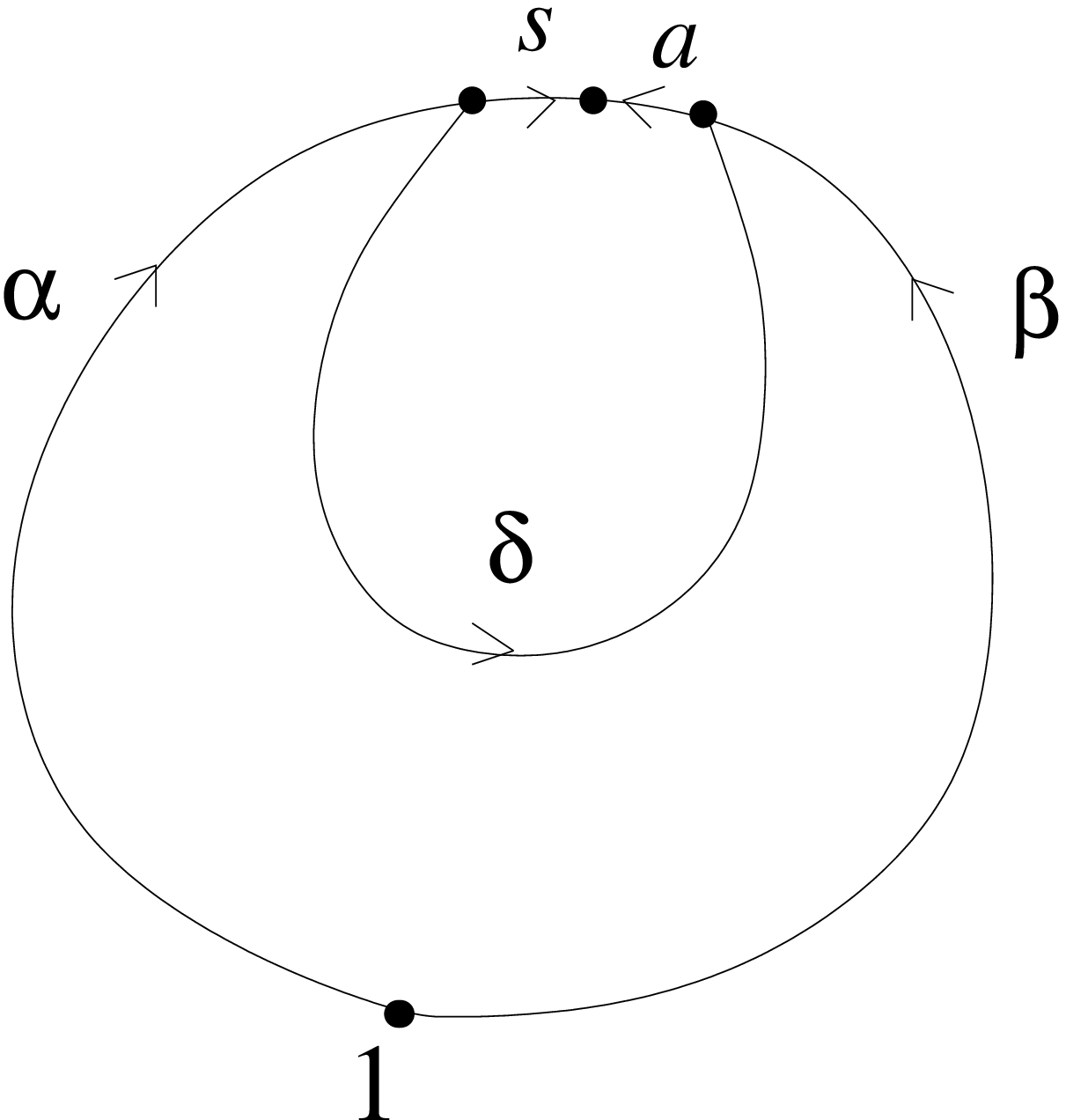}
& \hspace{5mm} &
\includegraphics[scale=.35]{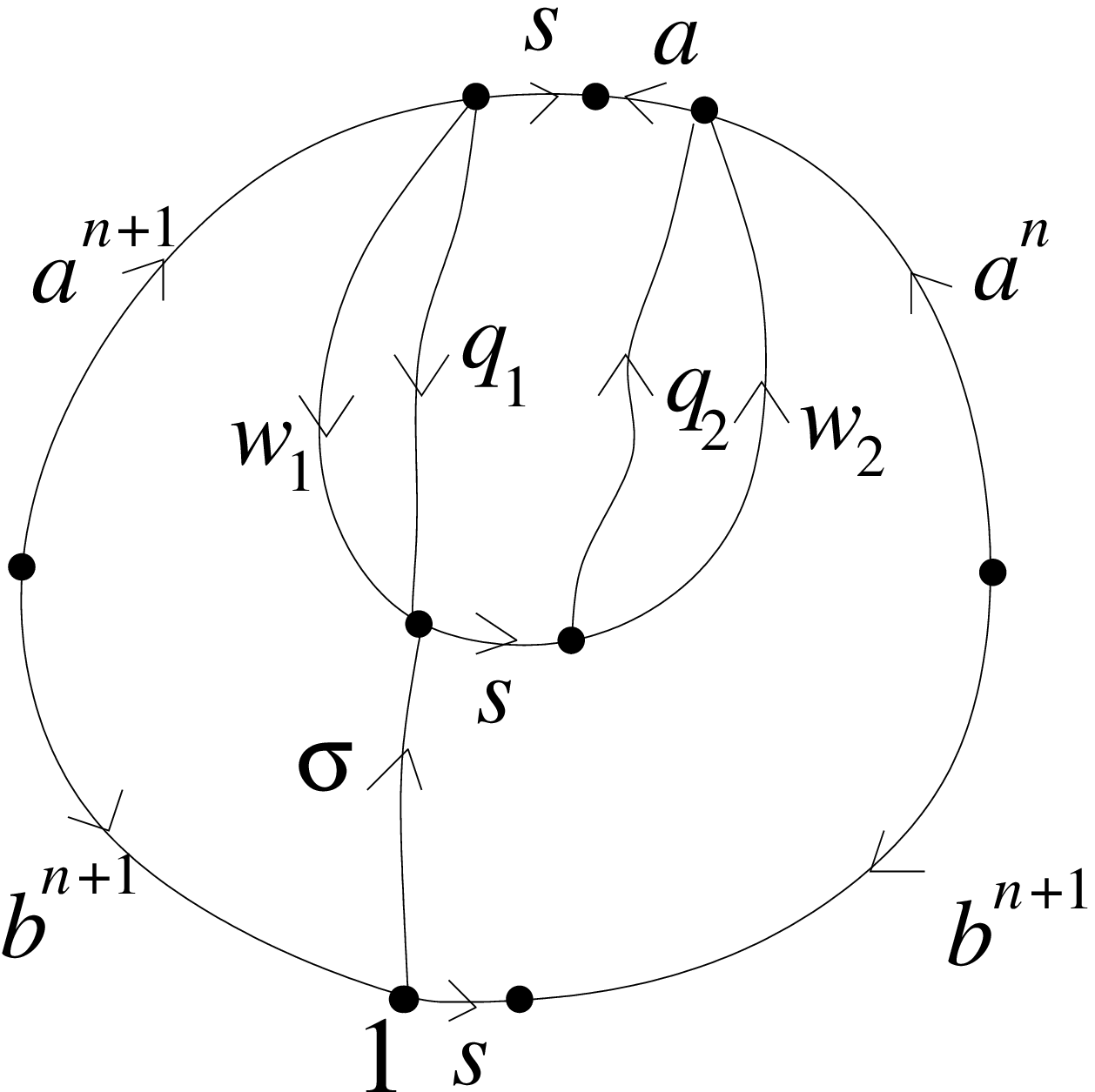}
\et
\caption{Paths in the Cayley graph of Stallings' group. \label{fig:stall}}
\end{figure}

The edge in $\Gamma$ labeled by $s$
connecting $\ov{\sigma}$ and
$\ov{\sigma s}$ is part of the path
$\delta$, so this edge must lie in the
ball of radius $2n+2$ in $\Gamma$.
Lemma \ref{lem:geodg}(3) says that $\sigma s$ is a geodesic,
so 
$d(1,\ov{\sigma})+1=l(\sigma)+1=l(\sigma s)=
d(1,\ov{\sigma s}) \le 2n+2$. 
Then the vertex
$\ov {\sigma} \in B(2n+1)$ and $l(\sigma) \le 2n+1$. 

Now $l(q_1)+1+l(q_2)\leq l(\delta)\leq 4n+2$, 
and thus 
either $l(q_1)\leq 2n$ or $l(q_2)\leq 2n$ (or both).

\bcase{\it A: If $l(q_1)\leq 2n$:}
Note that 
$\alpha q_1 \sigma^{-1} =b^{-(n+1)}a^{n+1} q_{1_{a,b}}q_1{_{c,d}}
\sigma_{c,d}^{-1}\sigma_{a,b}^{-1} =_{F_2 \times F_2} 1$.
Hence $q_1{_{c,d}}=_{F_2} \sigma_{c,d}$ and 
$\alpha q_{1_{a,b}}=_{F_2} \sigma_{a,b}$.  Since 
geodesics in free groups are unique, we also have
$q_1{_{c,d}}= \sigma_{c,d}$.  

There is an integer
$0 \le i_1 \le 2n$ such that $q_{1_{a,b}}(i_1)=\alpha^{-1}(i_1)$
but $q_{1_{a,b}}(i_1+1) \ne \alpha^{-1}(i_1+1)$, where
we denote $q_{1_{a,b}}(0):=1$ and $q_{1_{a,b}}(k):=q$
for all $k > l(q_{1_{a,b}})$.
Write $q_{1_{a,b}}=\alpha^{-1}(i_1)r$ with 
$r \in F_2=\langle a,b \rangle$.
The words $\alpha$, $q_{1_{a,b}}$, and $\sigma_{a,b}$
are all geodesic representatives
of elements of the free group $F_2$, 
and hence these are freely reduced words that define
non-backtracking edge paths in the tree given by
the Cayley graph for this group.  By definition of $i_1$, the 
product $\alpha q_{1_{a,b}}$ freely reduces
to $\alpha(2n+2-i_1)r$, with no further free reduction
possible.  Then $\alpha(2n+2-i_1)r$ is the unique geodesic
representative in $F_2=\langle a,b \rangle$
of $\alpha q_{1_{a,b}}$, and hence
$\alpha(2n+2-i_1)r = \sigma_{a,b}$.  

\bcase{\it A.1: If $i_1 \le n+1$:}
In this case, $q_{1_{a,b}}=a^{-i_1}r$.
Now $q_1=q_{1_{a,b}}q_{1_{c,d}}=a^{-i_1}rq_{1_{c,d}}$
represents an element of $H$, and so Lemma \ref{lem:hexpsum}
says $q_1$ has exponent sum zero.
Then $l(rq_{1_{c,d}}) \ge i_1$.  We also have
$\sigma = \sigma_{a,b} \sigma_{c,d}=
\alpha(2n+2-i_1)r q_{1_{c,d}}$.  Then the length
$l(\sigma) \ge (2n+2-i_1)+i_1=2n+2$, contradicting the
result above that $l(\sigma) \le 2n+1$.  Thus this
subcase cannot occur.

\bcase{\it A.2: If $i_1>n+1$:}
In this case, 
%$q_{1_{a,b}}=a^{-(n+1)}b^{i_1-(n+1)}r$ and
$\sigma_{a,b}=b^{-(2n+2-i_1)}r$.  Since
$\sigma=\sigma_{a,b}\sigma_{c,d}=b^{-(2n+2-i_1)}r\sigma_{c,d}$
represents an element of $H$, this word has exponent
sum zero, so $l(r\sigma_{c,d}) \ge 2n+2-i_1$ in this
subcase.  The word 
$q_1=q_{1_{a,b}}q_{1_{c,d}}=\alpha^{-1}(i_1)r\sigma_{c,d}$ then
has length $l(q_1) \ge i_1+(2n+2-i_1)=2n+2$, contradicting
the fact that we are in Case A.

\bcase{\it B: If $l(q_2)\leq 2n$:}
Since $\sigma \in H$, $\sigma$ commutes with $s$.  Then
$$
\sigma=_S s^{-1} \sigma s=_S s^{-1} \alpha q_1 s=_S
s^{-1} \alpha q_1 sq_2  q_2^{-1}=_S s^{-1} \beta q_2^{-1}
=_S \chi q_2^{-1}\ .
$$
In this case
$\sigma_{a,b}\sigma_{c,d}=_{F_2 \times F_2}
b^{-(n+1)}a^{n+1} q_{2_{a,b}}^{-1}q_2{_{c,d}}^{-1}$, so
$q_2{_{c,d}}^{-1}=_{F_2} \sigma_{c,d}$ and 
$\chi q_{2_{a,b}}^{-1}=_{F_2} \sigma_{a,b}$.  
Uniqueness of geodesics in $F_2=\langle c,d \rangle$ implies
$q_2{_{c,d}}^{-1}= \sigma_{c,d}$.  

There is an integer
$0 \le i_2 \le 2n$ such that 
$q_{2_{a,b}}^{-1}(i_2)=\chi^{-1}(i_2)$
but $q_{2_{a,b}}^{-1}(i_2+1) \ne \chi^{-1}(i_2+1)$.
Write $q_{2_{a,b}}=r(\chi^{-1}(i_2))^{-1}$ with 
$r \in F_2=\langle a,b \rangle$.
The words $\chi$, $q_{2_{a,b}}$, and $\sigma_{a,b}$
are all geodesics, and hence freely reduced words,
in $F_2$.  By definition of $i_2$, the 
product $\chi q_{2_{a,b}}^{-1}$ freely reduces
to $\chi(2n+1-i_2)r^{-1}$, with no further reduction
possible.  Then 
$\chi(2n+1-i_2)r^{-1} = \sigma_{a,b}$.  

\bcase{\it B.1: If $i_2 \le n$:}
In this case, $q_{2_{a,b}}=ra^{i_2}$.
Now $q_2=q_{2_{c,d}}q_{2_{a,b}}=q_{2_{c,d}}ra^{i_2}$.
Recall that $q_2$ was chosen
as a geodesic representative of an
element $\ov{w_2} \in G$ for which
$\ov{w_2a} \in H$.  Then $q_2a$ represents
an element of $H$, and so (by Lemma \ref{lem:hexpsum})
has exponent sum zero.
Therefore the exponent sum of $q_2$ is -1.
Then $l(q_{2_{c,d}}r) \ge i_2+1$.  We also have
$\sigma = \sigma_{a,b} \sigma_{c,d}=
\chi(2n+1-i_2)r^{-1} q_{2_{c,d}}^{-1}$.  Then the length
$l(\sigma) \ge (2n+1-i_2)+(i_2+1)=2n+2$, again
contradicting the
result above that $l(\sigma) \le 2n+1$.

\bcase{\it B.2: If $i_2>n$:}
In this case, 
%$q_{1_{a,b}}=a^{-(n+1)}b^{i_1-(n+1)}r$ and
$\sigma_{a,b}=b^{-(2n+1-i_2)}r^{-1}$.  Since
$\sigma=\sigma_{a,b}\sigma_{c,d}=b^{-(2n+1-i_2)}r^{-1}\sigma_{c,d}$
represents an element of $H$, this word has exponent
sum zero, so $l(r^{-1}\sigma_{c,d}) \ge 2n+1-i_2$ in this
subcase.  Therefore the word 
$q_2=q_{2_{c,d}}q_{2_{a,b}}=\sigma_{c,d}^{-1}r(\chi^{-1}(i_1))^{-1}$ 
has length $l(q_2) \ge (2n+1-i_2)+i_2=2n+1$, contradicting
the fact that we are in Case B. 

\vspace{3mm}

Therefore every possible subcase results in a
contradiction implying that the subcase cannot occur.
Then the path $\delta$ cannot exist, so $S$ is not
\mac\ with respect to the generating set $X$. 
\end{proof}

\bibliography{refs_mac}

\begin{thebibliography}{10}

\bibitem{MR2139683}
James Belk and Kai-Uwe Bux.
\newblock Thompson's group {$F$} is maximally nonconvex.
\newblock In {\em Geometric methods in group theory}, volume 372 of {\em
  Contemp. Math.}, pages 131--146. Amer. Math. Soc., Providence, RI, 2005.

\bibitem{MR2000g:20071}
Martin~R. Bridson.
\newblock Doubles, finiteness properties of groups, and quadratic isoperimetric
  inequalities.
\newblock {\em J. Algebra}, 214(2):652--667, 1999.

\bibitem{MR88a:20049}
James~W. Cannon.
\newblock Almost convex groups.
\newblock {\em Geom. Dedicata}, 22(2):197--210, 1987.

\bibitem{Ethesis}
Murray Elder.
\newblock Automaticity, almost convexity and falsification by fellow traveler
  properties of some finitely generated groups.
\newblock PhD Dissertation, University of Melbourne, 2000.

\bibitem{Elooppaper}
Murray Elder.
\newblock The loop shortening property and almost convexity.
\newblock {\em Geom. Dedicata}, 102(1):1--18, 2003.

\bibitem{MR93i:20036}
David B.~A. Epstein, James~W. Cannon, Derek~F. Holt, Silvio V.~F. Levy,
  Michael~S. Paterson, and William~P. Thurston.
\newblock {\em Word processing in groups}.
\newblock Jones and Bartlett Publishers, Boston, MA, 1992.

\bibitem{MR99k:20074}
Louis Funar.
\newblock On discrete solvgroups and {P}o\'enaru's condition.
\newblock {\em Arch. Math. (Basel)}, 72(2):81--85, 1999.

\bibitem{MR98b:20044}
J.~R.~J. Groves.
\newblock Minimal length normal forms for some soluble groups.
\newblock {\em J. Pure Appl. Algebra}, 114(1):51--58, 1996.

\bibitem{MR2207020}
V.~S. Guba.
\newblock The {D}ehn function of {R}ichard {T}hompson's group {$F$} is
  quadratic.
\newblock {\em Invent. Math.}, 163(2):313--342, 2006.

\bibitem{MR1879521}
Ilya Kapovich.
\newblock A note on the {P}o\'enaru condition.
\newblock {\em J. Group Theory}, 5(1):119--127, 2002.

\bibitem{Miller}
Charles~F. Miller, III.
\newblock Normal forms for some {B}aumslag-{S}olitar groups.
\newblock Preprint 1997.

\bibitem{MR99k:20075}
Charles~F. Miller, III and Michael Shapiro.
\newblock Solvable {B}aumslag-{S}olitar groups are not almost convex.
\newblock {\em Geom. Dedicata}, 72(2):123--127, 1998.

\bibitem{MR93d:57032}
V.~Po{\'e}naru.
\newblock Almost convex groups, {L}ipschitz combing, and {$\pi\sp \infty\sb 1$}
  for universal covering spaces of closed {$3$}-manifolds.
\newblock {\em J. Differential Geom.}, 35(1):103--130, 1992.

\bibitem{MR1931373}
Tim~R. Riley.
\newblock The geometry of groups satisfying weak almost-convexity or weak
  geodesic-combability conditions.
\newblock {\em J. Group Theory}, 5(4):513--525, 2002.

\bibitem{MR28:2139}
John Stallings.
\newblock A finitely presented group whose 3-dimensional integral homology is
  not finitely generated.
\newblock {\em Amer. J. Math.}, 85:541--543, 1963.

\bibitem{MR95e:20052}
Carsten Thiel.
\newblock {\em Zur fast-{K}onvexit\"at einiger nilpotenter {G}ruppen}.
\newblock Universit\"at Bonn Mathematisches Institut, Bonn, 1992.
\newblock Dissertation, Rheinische Friedrich-Wilhelms-Universit\"at Bonn, Bonn,
  1991.

\end{thebibliography}
\bibliographystyle{plain}

\end{document}